\newcommand{\h}{\mathbb{H}^\mathrm{Berk}_{\mathbb{R}}}
\newcommand{\x}{\mathrm{x}}
\newcommand{\dg}{\mathtt{deg}}
\newcommand{\A}{\bar{A}}
\newcommand{\V}{\mathcal{V}}
\newcommand{\w}{\varpi}
\newcommand{\K}{\Tilde{K}}
\newcommand{\e}{\eta_{a,r}}
\newcommand\sbmattrix[4]{\textnormal{\scriptsize$\left(\begin{array}{cc}#1&#2\\#3&#4\end{array}\right)$\normalsize}}
\newtheorem{theorem}{Theorem}[section]
\newtheorem{corollary}[theorem]{Corollary}
\newtheorem{lemma}[theorem]{Lemma}
\newtheorem{proposition}[theorem]{Proposition}
\theoremstyle{definition}
\newtheorem{definition}[theorem]{Definition}
\newtheorem{example}[theorem]{Example}
\newtheorem*{xrem}{Remark}
\numberwithin{equation}{section}
\begin{document}

%%%%% To ease editing, for IMPAN journals add:

\baselineskip=17pt

%%%%%%%%%%%%%%%%

\title[Continued Fraction in the completion of the Puiseux field]{A Continued Fractions Theory for the completion of the Puiseux field}

\author[Luis Arenas-Carmona]{Luis Arenas-Carmona}
\address{Universidad de Chile, Facultad de Ciencias, Casilla 652, Santiago, Chile.}
\email{learenas@u.uchile.cl}

\author[Claudio Bravo]{Claudio Bravo}
\address{Centre de Mathématiques Laurent Schwartz, École Polytechnique, Institut Polytechnique de Paris, 91128 Palaiseau Cedex.}
\email{claudio.bravo-castillo@polytechnique.edu}

\date{}

\begin{abstract}
In this work, we study a continued fractions theory for the
topological completion of the field of Puiseux series.
As usual, we prove that any element in the completion can be 
developed as a unique continued fractions, whose coefficients are polynomials in roots of the variable, 
and that this approximation is the best 
``rational'' Diophantine 
approximation of such element.
Then, we interpret the preceding result in terms of the action of 
a suitable arithmetic subgroup of the special linear group on 
the Berkovich space defined over the said completion.
We also explore  the connections between points of type IV of 
the Berkovich space in terms of some ``non-convergent'' or
``undefined'' continued fractions, in a sense that we make precise 
in the text.
\end{abstract}

\subjclass[2020]{Primary 11J70, 11J61, 13F25 ; Secondary 20G25, 14G22.}

\keywords{Continued fractions, Diophantine approximation, Puiseux field, modular group, Berkovich space.}

\maketitle

\section{Introduction}

A classical result in number theory, due to Euler, states that any
number $\alpha \in \mathbb{R}$ can be developed in a unique continued
fraction with integral coefficients, i.e. $\alpha$ is the limit of a
unique sequence of the form 
$$[a_0, \dots, a_N]:=a_0+\frac1{a_1+\frac{1}
{\ddots\frac1{a_{N-1}+\frac{1}{a_N}}}},$$
where $N\in \mathbb{Z}_{\geq 0}$, $a_0 \in \mathbb{Z}$ and 
$a_1, \dots, a_N \in \mathbb{Z} \smallsetminus \lbrace 0 \rbrace$. 
This result can be interpreted in terms of the action of 
$\mathrm{SL}_2(\mathbb{Z})$ on the Pioncar\'e half-plain endowed 
with the dual of the Farey tessellation (\cite{Ford,Series}).

In the function field context, an analogous result due to 
Schmidt is proved in \cite{Schmidt}.
This states that, for any field $E$, each $\alpha\in E((t^{-1}))$ 
can be developed in a unique continued fraction with polynomial 
coefficients, i.e. $\alpha$ is the limit of a unique sequence of 
the form $[f_0, \dots, f_N]$ as above, where $f_0 \in E[t]$ and 
$f_N \in E[t]\smallsetminus E$.
The rational approximation so defined is the best Diophantine
approximation of $\alpha$ according to 
\cite[Prop. 2.1 \& 2.2]{Paulin}.
These results have been interpreted in terms of the action of
$\mathrm{SL}_2(E[t])$ on the Bruhat-Tits tree of
$\mathrm{SL}_2\big(E((t^{-1}))\big)$ in \cite{Paulin}.
Some other results on Diophantine approximation for function 
fields have been developed in works like
\cite{Ganguly,Baier,Baier2} by Ganguly, Gosh, 
Bier and Molla, as well as by the authors in 
\cite{ArenasBravoDioph}.

As described above, the continued fraction theory is 
well understood for fields of the form $E((t^{-1}))$, 
and therefore also 
for the extension $E'((t^{-1/n}))$ for any $n$,
as such extensions are all trivially isomorphic.
In particular, the existence of continued fractions approximating elements in the Puiseux field 
$E \langle\langle t^{-1} \rangle \rangle:= 
\bigcup_{n\in \mathbb{Z}_{>0}} E((t^{-1/n}))$ 
is straightforward. 
Such result describes how every
Laurent series on some variable of the form $t^{-1/n}$,
i.e., a series of the form $\sum_{i=1}^\infty a_it^{r_i}$,
where $\{r_i\}_{i=1}^\infty$ is a sequence of rational number
with bounded denominators diverging to $-\infty$, can be written
as a continued fraction of some sort. The field
$E \langle\langle t^{-1} \rangle \rangle$ 
fails to be complete, 
which raises the natural question on what can be said on its 
completion $\widehat{E \langle\langle t^{-1} \rangle \rangle}$.
We answer this question in this work.
As the valuation of these fields is non-discrete, it is to be 
expected that convergence of continued fractions on this setting
is more subtle, and that new phenomena can manifest.

In what follows we refer to a finite sum 
of the form $\sum_{i=1}^N a_it^{r_i}$,
for $a_i\in E$ and $r_i\in\mathbb{Q}_{\geq0}$,
as a Puiseux polynomial.
Its degree is the biggest $r_i \in \mathbb{Q}$ with $a_i \neq 0$.
The ring of all Puiseux polynomials is
denoted $\A=E\langle t\rangle$.
As shown in this work, two phenomena that do appear
in our context are the following:
\begin{itemize}
\item Not every continued fraction whose coefficients are 
positive degree (i.e., non-constant) Puiseux polynomials is convergent.
\item Non-convergent continued fractions are related to some
(but not all) type IV points in a suitable Berkovich space.
\end{itemize}
The usual tools in dealing with continued fractions and their 
dynamical interpretation also need
some sharpening, especially those related to Bruhat-Tits trees,
as the corresponding structures in the present setting are no 
longer graphs in the classical sense. 
We expect that some tools presented here help 
in future works dealing with these and related issues on 
non discretely valued fields.

When $E$ is an algebraically closed field of characteristic $0$, the Puiseux field $E \langle\langle t^{-1} \rangle \rangle$ is the algebraic 
closure of $E((t^{-1}))$ according to \cite[Ch. IV, \S 2, Prop. 8]{Serre-localfields}, so our setting can be see as 
a natural extension of the previously cited results to the 
context of algebraically closed fields.
In \S \ref{sec existence of cf} \& \ref{sec convergence} 
we prove the existence and uniqueness 
of continued fractions whose coefficients are Puiseux 
polynomials. In \S \ref{sec diphantine properties} we study some of their properties as tools for Diophantine approximation.
Then, we interpret the preceding results in terms of the action of a suitable arithmetic subgroup of $\mathrm{SL}_2$, namely the subgroup
of $\A$-rational points,
on the Berkovich space $\mathbb{P}^{1,\mathrm{Berk}}$ defined over $\widehat{E \langle\langle t^{-1} \rangle \rangle}$.
Finally, in \S \ref{sec cf for points of type IV},
 we describe how type IV points
in $\mathbb{P}^{1,\mathrm{Berk}}$ can be described in terms of continued fractions.

\section{Main Results}\label{sec main results}

Consider the field
of rational functions $E(u)$, where $u$ is transcendental over $E$.
Consider also a discrete valuation $\nu: E(u) \to \lambda_{\nu}\mathbb{Z} \cup \lbrace 
\infty \rbrace$, given, for every $f,g \in E[u] \smallsetminus \lbrace 0 \rbrace$,
by $\nu(f/g)= \lambda_\nu\big(\deg(g)-\deg(f)\big)$,
for some fixed constant $\lambda_\nu>0$, and $\nu(0)=\infty$.
Note that the completion $E(u)_\nu$
is the field of Laurent series $K=E((u^{-1}))$.
The discrete valuation on $K$, also denoted by $\nu$, 
is then given by 
$\nu \left(\sum_{i=-N}^{\infty} a_i u^{-i}\right)=N\lambda_\nu$, 
when $a_{-N} \neq 0$.
By setting $u=t^{1/n}$, $K_n=E((t^{-1/n}))$ and $\lambda_\nu=1/n$, 
we can consistently define a valuation on the Puiseux field 
$\K:=E \langle\langle t^{-1} \rangle \rangle=\bigcup_{n=1}^{\infty}K_n$.
We denote by $\hat{K}$ the completion of 
the Puiseux field $\overline{K}$, 
which can be 
characterized as the set of all series $\sum_{i=0}^{\infty} a_i t^{r_{i}}$,
where $r_i$ is a sequence of rational numbers diverging to $-\infty$.
When  $E$ is an algebraically closed field of characteristic $0$, 
the Puiseux field $\K$ is the algebraic closure of $K=K_1$, according to \cite[Ch. IV, \S 2, Prop. 8]{Serre-localfields}.

Let us denote by $A_n=E[t^{1/n}]\subseteq K_n$ the polynomial ring 
in the variable $t^{1/n}$ with coefficients in $E$.
We denote by $\A=E\langle t \rangle$ the union $\bigcup_{n=1}^{\infty} A_n$,
which we call the ring of Puiseux polynomials.
The degree function $\dg$ is defined consistently on the ring $\A$
by the formula $\dg(f)=-\nu(f)$, so that $\dg(t^r)=r$ for 
any $r\in\mathbb{Q}_{\geq0}$. 
In the sequel, we write $k_n=F(t^{1/n})$, and
$\Tilde{k}:=\bigcup_{n=1}^{\infty} k_n=\mathrm{Quot}(\A)$, which is 
strictly contained in the algebraic closure of $k=k_1$.
Note that the ring $\A$ is not principal, or even Noetherian,
but it is still true that every element $x$ in
$\Tilde{k}$, which is contained in some $k_N$, 
can be written as a fraction $x=\frac pq$,
where $p,q\in \A$ span $\A$ as an ideal, and therefore can
have no non-trivial common divisors. 
We call such elements comaximal.

An infinite continued fraction with coefficients 
$\lbrace f_i\rbrace_{i=1}^{\infty} \subset \A$ is an expression of the form:
\begin{equation}\label{eq1}
{\textstyle[f_0, f_1, f_2, f_3, \dots]:=f_0+\frac1{f_1+\frac1{f_2+\frac1{f_3+\dots}}}.}
\end{equation}
We always assume that $f_i\neq0$ for $i>0$. We also consider
finite continued fractions of the following kind:
\begin{equation}\label{eq2}
{\textstyle [f_0, f_1,  \dots,f_{N-1},f_N]:=f_0+\frac1{f_1+\frac{1}{\ddots\frac1{f_{N-1}+\frac{1}{f_N}}}}.}
\end{equation}
Note that the latter has element of $\tilde{k}$
as a value.
We write $\w=[f_0,\dots,f_N]$ for the expression itself,
while we write 
$z=\w_{\mathrm{ev}}=[f_0,\dots,f_N]_{\mathrm{ev}}$
for its value. The length $l(\w)$ of a continued fraction is the index $N$.
Similarly, the length of an infinite continued fraction,
as in (\ref{eq1}), is $\infty$.
We often write an expression like $\w=[f_0,f_1,\dots]$ for a continued fraction
that can be either finite or infinite.
Furthermore,
when the sequence of truncated expressions 
$\w(n)=[f_0, f_1, \dots,f_n]$, for some fixed 
infinite expression 
$\w=[f_0,f_1,\dots]$, satisfies 
$\w(n)_{\mathrm{ev}}\stackrel{n\rightarrow\infty}
\longrightarrow z$,
for some element $z\in\hat{K}$, then we 
write $\w_{\mathrm{ev}}:=z$.
Our first result is the following, which describes any element of $\hat{K} \smallsetminus \Tilde{k}$ as an infinite continued fraction.

\begin{theorem}\label{main teo 1}
For each $z \in \hat{K} \smallsetminus \Tilde{k}$, 
there exists a unique sequence 
$\lbrace f_i \rbrace_{i=0}^{\infty} \subset \A$, 
with $\dg(f_i) >0$ for $i>0$, such that 
$z= [f_0, f_1, \dots,]_{\mathrm{ev}}.$
Moreover, the sequence
$\lbrace f_i \rbrace_{i=0} ^{\infty} 
\subseteq A_M$ precisely 
when $z \in K_M$.
In both cases, if we write 
$[f_0, f_1, \dots, f_n]_{\mathrm{ev}}=p_n/q_n$, 
with $p_n,q_n \in \A$ comaximal, then:
$$ \nu\left(z-\frac{p_n}{q_n}\right)=\dg(f_{n+1})+2\dg(q_n).$$
Furthermore $\dg(q_n)=\sum_{i=1}^n\dg(f_i)$, 
so $\sum_{i=1}^\infty\dg(f_i)$
diverges to $\infty$.
\end{theorem}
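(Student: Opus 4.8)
The plan is to run the classical continued-fraction (Gauss) algorithm adapted to $\hat K$, controlled throughout by the valuation $\nu$ and the degree $\dg=-\nu$. First I would make sense of the ``polynomial part'': for $z=\sum_i a_it^{r_i}\in\hat K$ with $r_i\to-\infty$, only finitely many $r_i$ are $\geq0$, so $\lfloor z\rfloor:=\sum_{r_i\geq0}a_it^{r_i}\in\A$ is a well-defined Puiseux polynomial with $\nu(z-\lfloor z\rfloor)>0$. Setting $z_0=z$, $f_n=\lfloor z_n\rfloor$ and $z_{n+1}=1/(z_n-f_n)$ defines the algorithm; since $z_n-f_n$ has positive valuation, $z_{n+1}$ has negative valuation, so $\dg(f_n)=\dg(z_n)>0$ for $n>0$. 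Each $z_n$ is the image of $z$ under a M\"obius transformation with coefficients in $\A$, so termination (some $z_n\in\A$) could be solved back to yield $z\in\tilde{k}$; the hypothesis $z\notin\tilde{k}$ thus guarantees an infinite expansion.

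Next I would introduce the convergents by the usual recurrence $p_n=f_np_{n-1}+p_{n-2}$, $q_n=f_nq_{n-1}+q_{n-2}$ (with $p_{-1}=1$, $q_{-1}=0$, $p_0=f_0$, $q_0=1$), verify $[f_0,\dots,f_n]_{\mathrm{ev}}=p_n/q_n$, and establish the determinant identity $p_nq_{n-1}-p_{n-1}q_n=(-1)^{n-1}$, which shows that $p_n,q_n$ span $\A$ as an ideal and are therefore comaximal. The formula $\dg(q_n)=\sum_{i=1}^n\dg(f_i)$ then follows by induction, since $\dg(f_n)>0$ makes $f_nq_{n-1}$ strictly dominate $q_{n-2}$, precluding cancellation. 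For the error estimate I would use the exact identity $z=(z_{n+1}p_n+p_{n-1})/(z_{n+1}q_n+q_{n-1})$, which with the determinant identity gives $z-p_n/q_n=\pm1/\big(q_n(z_{n+1}q_n+q_{n-1})\big)$. As $\dg(z_{n+1})=\dg(f_{n+1})>0$, the summand $z_{n+1}q_n$ dominates $q_{n-1}$, whence $\dg(z_{n+1}q_n+q_{n-1})=\dg(f_{n+1})+\dg(q_n)$ and $\nu(z-p_n/q_n)=\dg(f_{n+1})+2\dg(q_n)$, as claimed.

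The genuinely delicate point is convergence, i.e. $\nu(z-p_n/q_n)\to\infty$, equivalently $\sum_i\dg(f_i)=\infty$; granting this, the displayed estimate gives $\nu(z-p_n/q_n)\geq2\dg(q_n)\to\infty$, so $z=[f_0,f_1,\dots]_{\mathrm{ev}}$ and the divergence of $\sum_i\dg(f_i)$ is recorded. When $z\in K_M$ this is immediate: $K_M$ is a field, so every $z_n\in K_M$, every $f_n\in A_M$, and $\dg(f_n)\in\tfrac1M\mathbb{Z}_{>0}$ forces $\dg(f_n)\geq1/M$, whence $\sum_i\dg(f_i)\geq n/M\to\infty$; this also proves the forward direction of the ``$\{f_i\}\subseteq A_M$ precisely when $z\in K_M$'' clause. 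The converse follows since $f_i\in A_M$ forces $p_n/q_n\in k_M$, so by completeness of $K_M$ the limit $z$ lies in $K_M$.

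The hard part is $z\in\hat K\smallsetminus\tilde{k}$ with exponents of unbounded denominator, where non-discreteness of $\nu$ permits $\dg(f_i)\to0$ a priori; indeed, as already remarked, some expansions with $\dg(f_i)>0$ fail to converge. To exclude this for a genuine series I would exploit that $\hat K$ consists exactly of series whose support tends to $-\infty$, so for every bound $B$ only finitely many exponents of $z$ exceed $-B$. The plan is to approximate $z$ by its finite truncations $z^{(B)}\in\tilde{k}$ (keeping exponents $>-B$), whose finite continued fractions terminate at $z^{(B)}$, and to show the Gauss algorithm is stable under $\nu$-small perturbations, so that the expansion of $z$ agrees with that of $z^{(B)}$ through a number of steps growing with $B$; letting $B\to\infty$ yields $\nu(z-p_n/q_n)\to\infty$. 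I expect this stability argument to be the main obstacle, as it is exactly where the classical discreteness — which guarantees automatic progress $\dg(f_i)\geq1$ — is unavailable. Finally, uniqueness is standard: in any convergent expansion the tail $[f_1,f_2,\dots]_{\mathrm{ev}}$ has positive degree, so its reciprocal has positive valuation, forcing $f_0=\lfloor z\rfloor$ and, inductively, all $f_n$; hence the expansion produced by the algorithm is the only one.
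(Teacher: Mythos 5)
Your overall architecture is in fact the paper's: the same floor-and-invert algorithm (Lemma \ref{lemma first approx}, Definition \ref{def Alg of cf}), and for the crux---divergence of $\sum_i\dg(f_i)$---the same reduction: compare the expansion of $z$ with that of a nearby ``nice'' element via a perturbation-stability statement. Where you genuinely differ is the bookkeeping. You derive $\dg(q_n)=\sum_{i\leq n}\dg(f_i)$, comaximality, and the error formula $\nu(z-p_n/q_n)=\dg(f_{n+1})+2\dg(q_n)$ from the classical recurrences $p_n=f_np_{n-1}+p_{n-2}$, $q_n=f_nq_{n-1}+q_{n-2}$, the determinant identity, and ultrametric dominance; the paper instead computes $\nu\big(z-\x_n(z)\big)$ via logarithmic derivatives of the M\"obius maps $\rho_n$ (Lemma \ref{lemma log derivate and degrees}, Prop.\ \ref{prop approx cf}) and only converts $\sum_i\dg(f_i)$ into $\dg(q_n)$ at the end, through the matrix identity in its proof of the theorem. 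Your route is more elementary and equally valid. For convergence, the paper approximates $z$ by $z'\in\K\subseteq K_M$ and imports Schmidt's theory of the discretely valued field $K_M$, where $\dg(g_i)\geq 1/M$ forces divergence; you approximate by truncations $z^{(B)}\in\tilde{k}$ and use termination of their expansions instead. Both reductions work.

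The genuine gap is that the stability lemma you flag as ``the main obstacle'' is left unproved, and it is precisely the workhorse of the paper (Lemma \ref{lemma equal cf}): if $z_m$ is defined and $\nu(z-z')>2\sum_{i=1}^m\dg(f_i)$, then $z'_m$ is defined and the expansions agree through index $m$. Contrary to your worry, non-discreteness causes no difficulty here, because the hypothesis is phrased in terms of the actual degrees rather than a uniform lower bound: the one-step identity $\sigma_0(z)-\sigma_0(z')=\frac{z-z'}{(z-f_0)(z'-g_0)}$ shows that each step of the algorithm decreases $\nu(z-z')$ by exactly $2\dg(f_i)$, so the budget $2\sum_{i\leq m}\dg(f_i)$ buys exactly $m$ steps of agreement; induction finishes it. Granting this, your scheme closes, but only together with a second fact you assert without proof: that elements of $\tilde{k}$ have \emph{terminating} expansions (your first paragraph proves only the converse implication). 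That is the paper's Prop.\ \ref{mt2b}; alternatively, it follows because the algorithm run on $z\in k_M$ stays inside $k_M$ and is the Euclidean algorithm in $E[t^{1/M}]$. Then, if $\sum_i\dg(f_i)=S<\infty$, choosing $B>2S$ makes the stability lemma force the expansion of $z^{(B)}$ to be infinite, a contradiction. (A similar small omission sits in your uniqueness sketch: for an arbitrary infinite expansion converging to $z$ you must first show the tails converge and have valuation $-\dg(f_1)$, which is the paper's Lemma \ref{lemma valuation of cf}.) So: right strategy, two missing proofs, both supplied by the paper and both within reach of your outline.
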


We also prove that, for each $z\in \Tilde{k}$ there exists a unique sequence $\lbrace f_i \rbrace_{i=0}^{N} \subset \A$, with $\dg(f_i) >0$ for $i>0$, such that $z= [f_0, f_1, \dots, f_N]_{\mathrm{ev}}$ (see Lemma \ref{mt2b}).
Moreover, the set 
$\lbrace f_i \rbrace_{i=1} ^{N}$ is contained in $A_M$, 
exactly when $z$ belongs to 
$ k_M:=K_M\cap\tilde{k}$.

The fact that the ring of coefficients
for continued fractions on $\hat{K}$
turns out to be $\A$, instead of
the full integral closure of the polynomial ring, 
is to be expected 
since we can write other
algebraic functions as continued
functions on fractional powers
of $t$, as illustrated by
the following expansion
(left), that follows from the
relation on the right:
$$ \sqrt{t+1}=\sqrt t+\frac1{2
\sqrt t+\frac1{2\sqrt t+
\frac1{2\sqrt t+\dots}}},
\qquad
\sqrt{t+1}+\sqrt t=2\sqrt t
+\frac1{\sqrt{t+1}+\sqrt t}.
$$

\begin{theorem}\label{main teo 2}
The continued fraction in (\ref{eq1}) converges whenever the series
 $\sum_{i=1}^\infty\dg(f_i)$ diverges to $\infty$. In particular, every
 continued fraction satisfying such condition corresponds to a different point
 in $\hat{K}$.
\end{theorem}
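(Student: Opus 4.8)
The plan is to reduce the convergence of (\ref{eq1}) to the ultrametric Cauchy criterion through the sequence of convergents. First I would write $\w(n)_{\mathrm{ev}}=[f_0,\dots,f_n]_{\mathrm{ev}}=p_n/q_n$ with $p_n,q_n\in\A$ defined by the usual recursion $p_n=f_np_{n-1}+p_{n-2}$ and $q_n=f_nq_{n-1}+q_{n-2}$ for $n\ge1$, with $p_0=f_0$, $p_{-1}=1$, $q_0=1$, $q_{-1}=0$. A one-line induction yields the determinant identity $p_nq_{n-1}-p_{n-1}q_n=(-1)^{n-1}$, whence
\[
\frac{p_n}{q_n}-\frac{p_{n-1}}{q_{n-1}}=\frac{(-1)^{n-1}}{q_nq_{n-1}},\qquad \nu\!\left(\frac{p_n}{q_n}-\frac{p_{n-1}}{q_{n-1}}\right)=\dg(q_n)+\dg(q_{n-1}).
\]
Since $\hat K$ is complete and ultrametric, a sequence is Cauchy as soon as its consecutive differences tend to $0$; hence the convergents converge in $\hat K$ precisely when $\dg(q_n)\to\infty$, and this is the only thing to check.

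The core step is therefore the degree estimate $\dg(q_n)=\sum_{i=1}^n\dg(f_i)$. I would prove it by induction on the recursion $q_n=f_nq_{n-1}+q_{n-2}$, tracking leading coefficients as well as degrees: writing $c_i$ for the leading coefficient of $f_i$, I expect $q_n$ to have degree $\sum_{i\le n}\dg(f_i)$ and leading coefficient $\prod_{i\le n}c_i$. The inductive step compares $\dg(f_nq_{n-1})=\sum_{i\le n}\dg(f_i)$ with $\dg(q_{n-2})=\sum_{i\le n-2}\dg(f_i)$; the two differ by $\dg(f_{n-1})+\dg(f_n)$, so the leading term of $f_nq_{n-1}$ dominates and survives whenever $\dg(f_{n-1})+\dg(f_n)>0$. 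Granting the estimate, the hypothesis $\sum_i\dg(f_i)\to\infty$ gives $\dg(q_n)\to\infty$, so the convergents are Cauchy and $\w_{\mathrm{ev}}:=\lim_n p_n/q_n$ exists in $\hat K$.

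The main obstacle is precisely the leading-term cancellation that can occur when two consecutive coefficients are constants, the only situation in which $\dg(f_{n-1})+\dg(f_n)=0$ and the inductive step fails to be automatic. In the regime $\dg(f_i)>0$ for $i>0$ produced by Theorem \ref{main teo 1} this never happens and the estimate is immediate; in general one must rule out that such cancellations persist long enough to keep $\dg(q_n)$ bounded, using that $\sum_i\dg(f_i)$ still diverges and that the expansion is assumed well defined (so that no $q_n$ vanishes). I expect this bookkeeping, rather than any conceptual difficulty, to be the delicate part of the argument.

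For the final assertion that distinct expansions yield distinct points, I would recover the coefficients from the value $z$. The finitely many terms of $z$ of non-negative exponent form a Puiseux polynomial, and this is the unique $f\in\A$ with $\nu(z-f)>0$; since the estimate above forces $\nu(z-p_n/q_n)\ge\dg(q_{n+1})+\dg(q_n)>0$, this polynomial part must equal $f_0$. Passing from $z$ to $1/(z-f_0)$ and iterating then recovers $f_1,f_2,\dots$, so the value determines the expansion and the correspondence $\w\mapsto\w_{\mathrm{ev}}$ is injective. Alternatively, this is immediate from the uniqueness in Theorem \ref{main teo 1} together with the best-approximation property of the convergents.
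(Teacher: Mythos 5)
Your proposal is correct, but it takes a genuinely different route from the paper's. The paper never introduces the three-term recursion for this theorem: it applies Prop.~\ref{prop approx cf} to the truncated values $\w(m)_{\mathrm{ev}}$, $m>n$ (noting that the $n$-th approximant of $\w(m)_{\mathrm{ev}}$ is $\w(n)_{\mathrm{ev}}$), to get the exact estimate $\nu\big(\w(m)_{\mathrm{ev}}-\w(n)_{\mathrm{ev}}\big)=\dg(f_{n+1})+2\sum_{i=1}^{n}\dg(f_i)$, concludes convergence by completeness, and quotes Prop.~\ref{prop cf are uniq} for injectivity, adding via Lemma~\ref{lemma paulin} that the limit lies outside $\tilde{k}$. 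Your recursion--determinant argument reaches the very same Cauchy estimate, since $\dg(q_{n+1})+\dg(q_n)=\dg(f_{n+1})+2\sum_{i=1}^{n}\dg(f_i)$ by your degree formula; it is more elementary and self-contained, whereas the paper's route recycles the Moebius/logarithmic-derivative machinery already built for Theorem~\ref{main teo 1}, and your coefficient-recovery step (extracting $f_0$ as the unique $f\in\A$ with $\nu(z-f)>0$, then iterating on $1/(z-f_0)$) is in effect a direct re-proof of Prop.~\ref{prop cf are uniq}. One clarification about the ``delicate part'' you flag: the constant-coefficient cancellation concerns a case outside the theorem's scope. The paper's standing convention (see the introduction and Theorem~\ref{main teo 1}) is that $\dg(f_i)>0$ for $i>0$, and this restriction is forced by the statement itself---if constant partial quotients were allowed, the ``different point'' claim would fail, since a constant term can be absorbed by the standard equivalence transformations of continued fractions, producing distinct expressions with equal value and divergent degree sums. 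The paper's own proof carries the same restriction, because Prop.~\ref{prop approx cf} applies only to expansions produced by the algorithm of Def.~\ref{def Alg of cf}, whose coefficients automatically have positive degree. So your argument, which you correctly note is complete in the positive-degree regime, proves exactly what the theorem asserts; no extra bookkeeping is needed.
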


In \S \ref{sec diphantine properties} we extend a few well known properties 
of continued fractions to the present setting.
For $z=[f_0,\dots]_{\mathrm{ev}}\in\hat{K}$ we write 
$\x_n(z)=[f_0,\dots,f_n]_{\mathrm{ev}}$ 
for the truncated fraction, also known as the 
$n$-th approximant. Then we have next result, 
which generalizes the known fact
that approximants are the best possible rational 
approximations for usual continued fractions:

\begin{theorem}\label{main teo 4}
Let $p,q \in \A$, $q \neq 0$ such that $\nu \left(  
z- \frac{p}{q}\right)>-2\nu(q)$. 
Then, we have $p/q=\x_n(z)$, for some 
$n \in \mathbb{Z}_{\geq 0}$.
\end{theorem}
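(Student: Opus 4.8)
The plan is to translate the hypothesis into a single inequality between degrees and then run the classical unimodular-substitution argument, taking care that every estimate survives in the non-discrete setting. Since $\nu(z-p/q)=\nu(qz-p)-\nu(q)$ and $-2\nu(q)=2\dg(q)$, the assumption $\nu(z-p/q)>-2\nu(q)$ is equivalent to $\dg(qz-p)<-\dg(q)$. Passing to the comaximal representative of $p/q$ only decreases $\dg(q)$ and hence preserves this inequality, so I may assume $p,q$ comaximal. Writing $\x_n(z)=p_n/q_n$ with $p_n,q_n$ comaximal, Theorem~\ref{main teo 1} gives $\dg(q_nz-p_n)=-\dg(f_{n+1})-\dg(q_n)$, together with $\dg(q_n)=\sum_{i\le n}\dg(f_i)$, a strictly increasing sequence starting at $\dg(q_0)=0$ and diverging to $\infty$. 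I therefore fix the unique index $n\ge0$ with $\dg(q_n)\le\dg(q)<\dg(q_{n+1})$.

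Next I would invoke the Wronskian identity $p_nq_{n+1}-p_{n+1}q_n=\pm1$, part of the recursive construction of the $p_n,q_n$ in the earlier sections. As this determinant is a unit of $E\subset\A$, the system $p=up_n+vp_{n+1}$, $q=uq_n+vq_{n+1}$ has a unique solution $u,v\in\A$, and the whole proof reduces to showing $v=0$: indeed $v=0$ forces $q=uq_n$ with $u\neq0$, hence $p/q=p_n/q_n=\x_n(z)$. Suppose $v\neq0$. Then $u\neq0$ as well, for otherwise $\dg(q)=\dg(v)+\dg(q_{n+1})\ge\dg(q_{n+1})$, contradicting the choice of $n$. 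With both terms nonzero, the strong triangle inequality applied to $q=uq_n+vq_{n+1}$ together with $\dg(q)<\dg(q_{n+1})$ forces $\dg(uq_n)=\dg(vq_{n+1})$, that is, $\dg(u)=\dg(v)+\dg(f_{n+1})$.

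I would then expand $qz-p=u(q_nz-p_n)+v(q_{n+1}z-p_{n+1})$ and compare the two terms using the error formula. The first has degree $\dg(u)-\dg(f_{n+1})-\dg(q_n)=\dg(v)-\dg(q_n)$, while the second has degree $\dg(v)-\dg(f_{n+1})-\dg(f_{n+2})-\dg(q_n)$; since $\dg(f_{n+1})+\dg(f_{n+2})>0$ the first strictly dominates, so $\dg(qz-p)=\dg(v)-\dg(q_n)$. The hypothesis $\dg(qz-p)<-\dg(q)$ now gives $\dg(q)<\dg(q_n)-\dg(v)\le\dg(q_n)$, contradicting $\dg(q_n)\le\dg(q)$. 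Hence $v=0$ and $p/q=\x_n(z)$. The finite case $z\in\tilde k$ is handled identically using the expansion of Lemma~\ref{mt2b}, the only extra point being that when $\dg(q)\ge\dg(q_N)$ one has $\dg(qz-p)\ge-\dg(q_N)\ge-\dg(q)$ unless $qp_N-pq_N=0$, so the hypothesis directly forces $p/q=z=\x_N(z)$.

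The step I expect to be the main obstacle is the degree bookkeeping itself: unlike the classical discretely valued case, $\dg$ takes values in a dense subgroup of $\mathbb{Q}$, so I must ensure that each ``no cancellation'' conclusion rests only on the ultrametric inequality and on the \emph{strict} positivity of every $\dg(f_i)$ guaranteed by Theorem~\ref{main teo 1}, never on discreteness of the value group. Once the error formula of Theorem~\ref{main teo 1} is substituted into the two-term expansion, the remaining comparisons are purely formal.
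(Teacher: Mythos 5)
Your argument is correct, but it takes a genuinely different route from the paper's. The paper proves this theorem in three lines by quoting its ball machinery: by Prop.~\ref{mt2b} one writes $p/q=\w_{\mathrm{ev}}$ for a finite expression $\w=[f_0,\dots,f_n]$, and the hypothesis $\nu(z-p/q)>-2\nu(q)$ says precisely that $z$ lies in the open ball $B_{p/q}^{(r)}$ with $r=2\sum_{i=1}^n\dg(f_i)=2\dg(q)$, which Lemma~\ref{l42} identifies with $B_{\w}$, the set of elements whose continued fraction begins with $\w$; membership in $B_\w$ \emph{is} the conclusion. You instead run the classical Lagrange-style unimodular-substitution argument: sandwich $\dg(q)$ between $\dg(q_n)$ and $\dg(q_{n+1})$, decompose $(p,q)$ over the pair $(p_n,q_n),(p_{n+1},q_{n+1})$ using the unit determinant, and let the error formula of Theorem~\ref{main teo 1} force $v=0$ by degree comparison. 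Both work. The paper's proof is shorter but leans on \S5's apparatus (Lemma~\ref{l42} rests on Lemma~\ref{lemma equal cf} and Eq.~(\ref{eq approx cf})), and was chosen deliberately to exercise the ball description reused for type IV points; yours is self-contained relative to \S\S3--4, never touches the geometry, and is essentially the ``straightforward consequence of the corresponding properties for polynomial rings'' that the paper alludes to before Theorem~\ref{main teo 3}. Your insistence that every cancellation argument use only the ultrametric inequality, $\dg(v)\ge 0$ for $v\in\A_{\neq 0}$, and strict positivity of the $\dg(f_i)$ (never discreteness of the value group) is exactly the right care, and your bookkeeping does satisfy it.

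Two small repairs are needed. First, the Wronskian identity $p_nq_{n+1}-p_{n+1}q_n=\pm1$ is not stated anywhere in the paper; it does follow from the matrix identity in the proof of Theorem~\ref{main teo 1}, where $\sbmattrix{r_n}{s_n}{q_n'}{-p_n'}$ is a product of matrices of determinant $-1$, but since your $p_n,q_n$ are comaximal representatives, defined only up to units of $\A$, what you actually obtain is that $p_nq_{n+1}-p_{n+1}q_n$ is a unit of $E$, not literally $\pm1$. That is all you need to solve for $u,v\in\A$, but you should state it that way (and include the one-line determinant computation). Second, in the finite case $z\in\tilde{k}$ with $\dg(q_n)\le\dg(q)<\dg(q_{n+1})$ and $n+1=N$, the second term of your expansion is $v(q_Nz-p_N)=0$; the domination argument still goes through, since a term of degree $-\infty$ is certainly strictly dominated, but ``handled identically'' glosses over this boundary subcase and it is worth a sentence.
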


Theorem \ref{main teo 4} and similar results are a 
straightforward consequence of the corresponding 
properties for polynomial rings. However, we provide
here a direct proof from the techniques used, later on, 
to prove Theorem \ref{main teo 3} below 
(c.f. \S\ref{sec diphantine properties}).

Let $\mathbb{P}^{1,\mathrm{Berk}}$ be the Berkovich projective 
line defined from the valued field $\hat{K}$.
Recall that the points in $\mathbb{P}^{1,\mathrm{Berk}}$
classify into $4$ types according to 
\cite[Ex. 1.4.3]{Berkovich}, as follows:
\begin{itemize}
\item The point of type II and III correspond 
to closed balls
$B[a,r]$, where $a\in \K$ and $r\in \mathbb{Q}$ and 
$r \in \mathbb{R} \smallsetminus \mathbb{Q}$,
respectively.
The set of all points of type II and III is denoted
$\h$ and it has the topology of a generalized tree.  
\item The points of type I correspond to 
the visual limits of rays in
$\h\subseteq\mathbb{P}^{1,\mathrm{Berk}}$, 
i.e., parallelism classes of rays in $\h$.
Such points are naturally in correspondence with
the $\hat{K}$-points of the projective 
line $\mathbb{P}^1$.
\item Finally, the points of type IV which can be 
characterized as  limits of sequences 
$\{x_i\}_{i=1}^\infty \subset
\h$
corresponding to decreasing sequences 
$B_{x_1}\supseteq B_{x_2}\supseteq\cdots$
of closed balls $B_{x_i}=B[a_i,r_i]$
with empty intersection.
Here two such sequences define the same type IV point exactly when they are cofinal.
\end{itemize}

Theorems \ref{main teo 1} and \ref{main teo 2} can 
be interpreted in terms of the action of 
$\Gamma:=\mathrm{SL}_2(\A)$ on 
$\mathbb{P}^{1,\mathrm{Berk}}$.
Indeed, in Prop. \ref{prop fund domain and closing umbrellas}
we prove that the quotient (orbit space) defined from the 
action of $\Gamma$ on 
$\h \subset \mathbb{P}^{1,\mathrm{Berk}}$ 
is isomorphic to a certain ray 
$\mathscr{R}_{\infty} \subset \h$. 
This result is a Berkovich theoretical analog of 
a classical result proved by Serre in 
\cite[Ch. II, \S 2.4, Ex (a)]{SerreTrees},
on the action of a certain modular group on the 
Bruhat-Tits tree.
Thus, given an element $\alpha \in \hat{K}$, we describe 
in \S \ref{sec cf for points of type IV} the promenade 
in $\mathscr{R}_{\infty}$ corresponding to the image in 
$ \Gamma \backslash \h$ 
of the double ray connecting the type I 
points $\alpha \in\hat{K}$ and $\infty$.
Indeed, we prove in Prop. \ref{p 74} that the 
degree of the polynomials $f_n$ in the continued 
fraction converging to $\alpha \in \hat{K}$ can be 
read from the aforementioned promenade,
extending some results of \cite[\S 6]{Paulin} 
to our context.
This construction can be extended so that it makes sense
to talk about continued fractions associated to some, but 
not all, type IV points in $\mathbb{P}^{1,\mathrm{Berk}}$. 
Next result elaborates this notion:

\begin{theorem}\label{main teo 3}
The elements $z\in\hat{K}$ whose corresponding 
continued fraction starts with 
a given sequence $f_0,\dots,f_n$ form a ball in the 
valued field $\hat{K}$.
When $f_0,\dots$ is a sequence for which
$\sum_{i=1}^\infty\dg(f_i)$ converges,
the corresponding sequence of balls has empty 
intersection, and therefore
corresponds to a type IV point in the corresponding 
Berkovich space (see \S\ref{sec geom int of cf}). Not
every type IV point is obtained in this fashion, 
however, those that do not
correspond to finite continued fraction with an 
``undefined'' last coefficient,
in the sense described in Prop. \ref{p613}.
\end{theorem}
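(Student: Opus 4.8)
The plan is to prove the statement in four parts, corresponding to its four assertions. \textbf{Part 1 (balls).} First I would show that the set of $z \in \hat{K}$ whose continued fraction starts with a fixed sequence $f_0, \dots, f_n$ is a ball. By Theorem \ref{main teo 1}, the initial segment $f_0, \dots, f_n$ determines the approximant $p_n/q_n = [f_0, \dots, f_n]_{\mathrm{ev}}$, and the formula $\nu(z - p_n/q_n) = \dg(f_{n+1}) + 2\dg(q_n)$ shows that fixing $f_0, \dots, f_n$ pins $z$ to the locus where $\nu(z - p_n/q_n) > 2\dg(q_n) = -2\nu(q_n)$, the strict inequality coming from $\dg(f_{n+1}) > 0$. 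I would argue the converse via Theorem \ref{main teo 4}: any $z$ satisfying $\nu(z - p_n/q_n) > -2\nu(q_n)$ has $p_n/q_n$ as one of its approximants, hence begins with $f_0, \dots, f_n$. Thus the set is exactly the open ball $\{z : \nu(z - p_n/q_n) > -2\nu(q_n)\}$, noting that in the non-discretely valued field $\hat{K}$ the relevant sets are genuine balls.

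\textbf{Part 2 (empty intersection and type IV).} Next, when $\sum_{i=1}^\infty \dg(f_i)$ converges, I would show the nested balls $B_n$ have empty intersection. The radii are governed by $-\nu(q_n) = -\sum_{i=1}^n \dg(f_i)$, which by Theorem \ref{main teo 1} increases but, under the convergence hypothesis, stays bounded; hence the radii do not shrink to zero. An element in $\bigcap_n B_n$ would have an infinite continued fraction expansion beginning with every $f_0, \dots, f_n$, i.e. equal to $[f_0, f_1, \dots]_{\mathrm{ev}}$; but Theorem \ref{main teo 2}, applied in contrapositive, tells us this fraction does not converge when the degree series fails to diverge, so no such limit exists in $\hat{K}$. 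Combined with uniqueness from Theorem \ref{main teo 1}, this forces $\bigcap_n B_n = \emptyset$, which by the characterization of type IV points recalled in \S\ref{sec main results} yields a type IV point of $\mathbb{P}^{1,\mathrm{Berk}}$.

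\textbf{Part 3 (not every type IV point arises).} I would then show the map from convergent-to-a-type-IV-point sequences to type IV points is not surjective. The key structural constraint is that a type IV point arising from a continued fraction is the limit of balls whose centers $p_n/q_n$ lie in $\tilde{k}$ and whose radii form the specific decreasing sequence $\{-\nu(q_n)\}$, governed by the additive structure of the partial degree sums. A general type IV point corresponds to an arbitrary cofinal class of nested balls with empty intersection, and the radii of such a sequence may approach a value not realizable as a bounded increasing sum $\sum \dg(f_i)$ of positive rationals with bounded denominators at each finite stage, or the centers may fail to be approximable by the comaximal fractions $p_n/q_n$ produced by the algorithm. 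Exhibiting one explicit type IV point outside the image suffices, and I would construct it by choosing a nested sequence of balls whose limiting radius is, say, an irrational real number that cannot be the supremum of any partial-degree sequence consistent with the algorithm.

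\textbf{Part 4 (the undefined last coefficient).} Finally, for those type IV points not of the convergent-infinite-fraction form, I would match them to finite continued fractions $[f_0, \dots, f_{n-1}, \ast]$ with an ``undefined'' last coefficient, as made precise in Prop. \ref{p613}. The idea is that such a point is still the limit of nested balls, but after finitely many steps the usual algorithm cannot select a well-defined next Puiseux polynomial $f_n$—instead the remainder determines a nested family of balls that itself has empty intersection at that single stage. I would invoke Prop. \ref{p613} directly to identify this degenerate last step, showing the correspondence is exhaustive: every type IV point is either the limit of a genuinely infinite fraction (Part 2) or a finite fraction terminating in an undefined coefficient. \emph{The hard part will be Part 3}, namely giving a clean invariant that distinguishes type IV points in the image from those outside it; the radii and center data must be reconciled with the rigid arithmetic of bounded-denominator degree sums, and making the non-surjectivity both correct and transparent—rather than merely asserting it—is the principal obstacle.
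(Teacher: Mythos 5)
Your Parts 1, 2 and 4 track the paper's own decomposition of the theorem: the ball statement is exactly the paper's Lemma \ref{l42}, the empty-intersection statement is Proposition \ref{p66}, and the classification of the remaining type IV points is delegated, in the paper as in your Part 4, to Proposition \ref{p613}. Two small repairs are needed there. In Part 2 you invoke ``Theorem \ref{main teo 2} in contrapositive,'' but the contrapositive of Theorem \ref{main teo 2} reads ``non-convergence of the fraction implies non-divergence of $\sum\dg(f_i)$,'' whereas what you need is the inverse implication (non-divergence of the series implies no limit exists). That statement is not Theorem \ref{main teo 2}; it is Proposition \ref{Prop conv fc} (equivalently, the final clause of Theorem \ref{main teo 1}): any $z\in\hat{K}$ whose continued fraction is infinite has divergent degree series, so no $z$ can lie in every $B_{\w_n}$ --- which is precisely how the paper proves Proposition \ref{p66}. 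In Part 1, your converse inclusion cites Theorem \ref{main teo 4}; but in the paper that theorem is itself deduced from the ball description you are trying to prove (its proof uses $B_a^{(r)}=B_{\w}$, i.e.\ Lemma \ref{l42}), so as written the argument is circular within the paper's logical order. The fix is to cite Lemma \ref{lemma equal cf} directly, which is what the paper does to get $B_a^{(r)}\subseteq B_{\w}$.

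The genuine gap is Part 3, and it is not merely a matter of ``transparency'': the invariant you propose cannot work. You suggest exhibiting a type IV point whose limiting radius is ``an irrational real number that cannot be the supremum of any partial-degree sequence consistent with the algorithm.'' No such real number exists. The degrees $\dg(f_i)$ range over all of $\mathbb{Q}_{>0}$, with no denominator bound across the sequence (bounded denominators are only forced within each single Puiseux polynomial); so for any increasing sequence of positive rationals $r_1<r_2<\cdots$ one may take $f_i=t^{(r_i-r_{i-1})/2}$, getting continued-fraction balls $B_{\w_n}$ of valuation radius exactly $r_n$. Hence every positive real, rational or irrational, occurs as a limiting radius of type IVa data, and radius arithmetic alone cannot separate the image from its complement. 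The paper's obstruction is positional, not arithmetic: in Example \ref{e69} one takes $a_n=\sum_{i=1}^{n}t^{1/i}$ and the nested balls $B_n=B_{a_n}^{[\nu(t)/(n+1)]}$, which have \emph{negative} valuation radii and empty intersection. Since every ball $B_{\w}$ coming from a continued fraction has valuation radius $2\sum_{i=1}^{n}\dg(f_i)>0$, no $B_n$ can be contained in any $B_{\w}$, so the two nested families are never cofinal, and the resulting type IV point is not obtained from any sequence $\{f_i\}$; in the promenade language of \S\ref{sec cf for points of type IV}, this point's promenade never reaches $0$ and so never bounces. That sign-of-the-radius observation is the concrete non-surjectivity witness your Part 3 is missing.
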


\section{Recursive definition of Continued Fractions}
\label{sec existence of cf}

Let $E$ be a field, and write $\overline{E}$
for its algebraic closure. Then $K=E((t^{-1}))$
is a field with a surjective valuation 
$\nu: K \to \mathbb{Z} \cup \lbrace \infty \rbrace$ given by 
$\nu\left(\sum_{i=-N}^{\infty} a_i t^{-i}\right)=N$, 
when $a_{-N} \neq 0$.
In particular $\pi=t^{-1}$ is a uniformizing parameter of $K$.
Next result is a classical theorem attributed to 
Puiseux, but essentially known to Newton.

\begin{lemma}
\cite[Ch. IV, \S 2, Prop. 8]{Serre-localfields}
\label{lemma Puiseux field}
When $E$ has characteristic $0$, the algebraic closure of $K$
is $\overline{E} \langle \langle t^{-1} \rangle \rangle$ 
which equals both
$\bigcup_{n \in \mathbb{Z}_{>0}}\overline{E}((t^{-1/n}))$ 
and $\bigcup_{E'/E \text{ alg}.} 
E' \langle \langle t^{-1} \rangle \rangle$.
\end{lemma}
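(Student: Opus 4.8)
The plan is to reduce everything to a statement about a single complete discretely valued field whose residue field is algebraically closed, and then to invoke the structure theory of tamely ramified extensions, which is exactly where the characteristic-$0$ hypothesis does all the work. First I would set up the local picture: $K=E((t^{-1}))$ is a complete discretely valued field with uniformizer $\pi=t^{-1}$, value group $\mathbb{Z}$, and residue field $E$. The three descriptions in the statement separate the two independent directions in which $K$ extends: enlarging the residue field from $E$ to $\overline E$ (the unramified, or horizontal, direction) and adjoining fractional powers of $\pi$ (the totally ramified, or vertical, direction). I would record that each finite subextension $E'/E$ yields an unramified extension $E'((t^{-1}))/K$ of degree $[E':E]$, since the minimal polynomial of a primitive element of $E'/E$ stays irreducible over $K$ by Hensel's lemma (its reduction is already irreducible over the residue field $E$), and that adjoining $t^{-1/m}$ yields a totally ramified extension $E((t^{-1/m}))/K$ of degree $m$.

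The heart of the argument is the claim that, when the residue field $k_0$ is algebraically closed of characteristic $0$, the field $L=k_0((\varpi))$ with uniformizer $\varpi=s^{-1}$ has algebraic closure $\bigcup_{m\ge 1}k_0((s^{-1/m}))$. I would prove this in two steps. First, every finite extension $M/L$ is totally ramified: its residue field is a finite extension of the algebraically closed field $k_0$, hence equals $k_0$, so the residue degree is $1$ and $[M:L]$ equals the ramification index $e$. Second, such an $M$ is $k_0((s^{-1/e}))$: because $\mathrm{char}\,k_0=0$ the extension is tamely ramified, and the structure theorem for tame totally ramified extensions gives $M=L(\pi^{1/e})$ for some uniformizer $\pi$ of $L$; writing $\pi=c\,\varpi\,(1+\cdots)$ with $c\in k_0^\times$, the unit factor is an $e$-th power in $L$ (apply Hensel to $X^e-\text{unit}$, valid since $e$ is invertible and $k_0$ is algebraically closed), so $M=L(\varpi^{1/e})=k_0((s^{-1/e}))$. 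Since the compositum of $k_0((s^{-1/e}))$ and $k_0((s^{-1/e'}))$ is $k_0((s^{-1/\mathrm{lcm}(e,e')}))$, the union $\bigcup_m k_0((s^{-1/m}))$ is closed under finite extensions and is therefore algebraically closed.

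With this in hand I would assemble the general statement. Given a monic polynomial over $F:=\bigcup_n\overline E((t^{-1/n}))$, its finitely many coefficients lie in a single $\overline E((t^{-1/n}))$; setting $s=t^{1/n}$ and $k_0=\overline E$, the previous paragraph places all its roots in $\bigcup_m\overline E((t^{-1/(nm)}))\subseteq F$, so $F$ is algebraically closed. As $F\supseteq K$, it contains an algebraic closure $\overline K$, namely the subfield of elements of $F$ algebraic over $K$; tracking the horizontal and vertical directions identifies this subfield with $\bigcup_{E'/E\ \mathrm{fin.}}\bigcup_n E'((t^{-1/n}))=\bigcup_{E'/E\ \mathrm{alg.}}E'\langle\langle t^{-1}\rangle\rangle$, and when $E=\overline E$ this is all of $F=\overline E\langle\langle t^{-1}\rangle\rangle$, giving the three claimed descriptions.

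The main obstacle is the second step above, the structure theorem for tamely ramified extensions: pinning down that $M=L(\pi^{1/e})$ and that the unit can be absorbed requires the full force of $\mathrm{char}\,k_0=0$ (so that $e$ is invertible and $X^e-u$ is separable, letting Hensel produce the needed $e$-th roots) together with $k_0$ algebraically closed. I would also flag a subtlety in the identification of $\overline K$: when $[\overline E:E]=\infty$ the field $F$ is genuinely larger than $\overline K$, since a single Puiseux series may have coefficients generating an infinite extension of $E$ and then fail to be algebraic over $K$, so the algebraic closure must be described through the finite-level union rather than through $F$ itself.
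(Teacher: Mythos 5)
The paper does not actually prove this lemma: it is quoted from Serre, and the cited proposition is the Newton--Puiseux statement for a complete discretely valued field whose residue field is \emph{algebraically closed} of characteristic $0$. Your central two-step argument --- a finite extension $M/L$ of $L=k_0((\varpi))$ has trivial residue extension, hence is totally ramified of degree $e$, hence (characteristic $0$) tame, hence $M=L(\pi^{1/e})$ for a uniformizer $\pi$ whose unit part is absorbed by Hensel, so $M=k_0((\varpi^{1/e}))$ --- is precisely the proof behind that citation, so for the case the paper actually needs (namely $E$ algebraically closed, the hypothesis stated in the introduction, in \S 2, and needed implicitly in the remark on quadratic elements) your proposal is correct and essentially reconstructs the cited argument.

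The divergence lies in your closing caveat, and there you are more careful than the paper: the lemma as printed, for an arbitrary $E$ of characteristic $0$, is false. Your transcendence claim can be made precise as follows: if $\alpha=\sum_i a_it^{-i}\in\overline{E}((t^{-1}))$ is algebraic over $K$, then its orbit under the coefficientwise action of $\mathrm{Gal}(\overline{E}/E)$ is finite (each image is a root of the same minimal polynomial), so the stabilizer $\bigcap_i\mathrm{Stab}(a_i)$ is a closed subgroup of finite index, hence open, hence equal to $\mathrm{Gal}(\overline{E}/E')$ with $E'/E$ finite, forcing all $a_i$ into $E'$. Consequently $\overline{K}=\bigcup_{E'/E\ \mathrm{finite}}E'\langle\langle t^{-1}\rangle\rangle$; for $E=\mathbb{Q}$ the series $\sum_i 2^{1/i}t^{-i}$ lies in $\overline{\mathbb{Q}}((t^{-1}))$ but not in $\overline{K}$, so the three sets in the lemma all coincide with $\overline{K}$ only when $[\overline{E}:E]<\infty$. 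This also exposes the one flaw in your own write-up: the identity in your third paragraph, $\bigcup_{E'/E\ \mathrm{fin.}}\bigcup_n E'((t^{-1/n}))=\bigcup_{E'/E\ \mathrm{alg.}}E'\langle\langle t^{-1}\rangle\rangle$, contradicts your own closing remark under the natural reading of ``alg.'' (the right-hand side contains $\overline{E}\langle\langle t^{-1}\rangle\rangle$, obtained by taking $E'=\overline{E}$); you should delete that equality and keep only the finite-level description of $\overline{K}$. With that repair, your proposal proves the corrected statement; and since the paper only ever uses the lemma either definitionally (elements of the Puiseux field lie in some $K_M$) or with $E$ algebraically closed, none of its downstream results are affected by this imprecision in the statement.
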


As before, we write $K_n=E((t^{-1/n}))$
and we extends $\nu$ to all these fields by 
setting $\nu(t^{-1/n})=1/n$.
We denote by $\K=\bigcup_{n>0} K_n =
E\langle \langle t^{-1}\rangle \rangle$ 
the Puiseux field of $E$.
This field is non-complete with respect to a surjective 
valuation $\nu: \K \to \mathbb{Q}\cup \lbrace 0 \rbrace$.
Then, we denote its completion by $\hat{K}$.
By abuse of notation we use $\nu$ for the valuation 
on $\hat{K}$, and therefore also on every subfield.
In all that follows, we keep the notations 
$ A_n=E\left[t^{1/n}\right]$, $k_n=E(t^{1/n})$, $k=k_1$,
 $\A=\bigcup_{n=1}^{\infty} A_n$, 
 $\Tilde{k}=\bigcup_{n=1}^{\infty} k_n$ and 
 $\dg(f)=-\nu(f)$ from the introduction. 
 In particular $\dg(0)=-\infty$.
 We also write $\A_{\neq0}=
 \A \smallsetminus \lbrace 0 \rbrace$.

\begin{lemma}\label{lemma first approx}
Let $z \in \hat{K}^{*}$. Then there exists a 
unique $f \in \A$ such that 
$\nu(z-f) >0$.
Moreover, $f=0$ precisely when $\nu(z)>0$. If $f\neq0$,
then $\dg(f)=-\nu(z)$. In particular, $\dg(f)=0$ 
if and only if $\nu(z) =0$.
\end{lemma}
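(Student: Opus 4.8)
The plan is to construct $f$ explicitly as the ``Puiseux polynomial part'' of $z$, and then deduce uniqueness together with the numerical assertions from the strict triangle inequality. First I would invoke the characterization of $\hat{K}$ recalled in \S\ref{sec main results}: write $z=\sum_{i=0}^{\infty}a_it^{r_i}$ with $a_i\in E\smallsetminus\{0\}$ and $\{r_i\}$ a sequence of rationals with $r_i\to-\infty$. The decisive observation is that, precisely because $r_i\to-\infty$, only finitely many indices satisfy $r_i\geq 0$. Setting $f:=\sum_{r_i\geq 0}a_it^{r_i}$ therefore yields a \emph{finite} sum, and these finitely many non-negative exponents admit a common denominator $N$, so that $f\in A_N\subseteq\A$. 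By construction $z-f=\sum_{r_i<0}a_it^{r_i}$ has all exponents strictly negative (or is $0$), whence $\nu(z-f)>0$, giving existence.

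For uniqueness, suppose $f,f'\in\A$ both satisfy $\nu(z-f)>0$ and $\nu(z-f')>0$. Then $\nu(f-f')\geq\min\{\nu(z-f),\nu(z-f')\}>0$, so $\dg(f-f')<0$. But a nonzero element of $\A$ is a Puiseux polynomial all of whose exponents are $\geq 0$, hence has $\dg\geq 0$; thus $f-f'=0$. I would isolate this single structural fact---that $g\in\A\smallsetminus\{0\}$ forces $\nu(g)\leq 0$---since it drives both uniqueness and the degree formula.

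The remaining assertions follow from the strict triangle inequality. If $f=0$ then $\nu(z)=\nu(z-f)>0$; conversely, if $\nu(z)>0$ then $f=0$ already satisfies the defining condition, so by uniqueness the associated $f$ is $0$. When $f\neq 0$ we have $\nu(f)\leq 0<\nu(z-f)$, so the valuations of $f$ and $z-f$ are distinct and $\nu(z)=\nu\big(f+(z-f)\big)=\min\{\nu(f),\nu(z-f)\}=\nu(f)$; hence $\dg(f)=-\nu(f)=-\nu(z)$. Finally, combining the two cases, $\dg(f)=0$ forces $f\neq 0$ and thus $\nu(z)=0$, while $\nu(z)=0$ rules out $f=0$ and gives $\dg(f)=-\nu(z)=0$.

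The only genuine obstacle is verifying that the non-negative part $f$ really lands in $\A$ rather than merely in the completion $\hat{K}$; everything else is a routine application of the ultrametric inequality. As noted, this is settled by the divergence $r_i\to-\infty$, which confines the non-negative exponents to a finite---hence common-denominator---set, thereby taming the non-discreteness of the valuation. I expect no difficulty beyond making this finiteness argument precise.
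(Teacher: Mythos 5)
Your proof is correct, but the existence step follows a genuinely different route from the paper's. You truncate the series expansion of $z$ itself, invoking the characterization of $\hat{K}$ recalled in \S\ref{sec main results} as the set of series $\sum_i a_i t^{r_i}$ with $r_i\to-\infty$; the finiteness of the set of non-negative exponents, plus the common-denominator observation, then places $f$ in $\A$. The paper never uses this global description of $\hat{K}$: it uses only that $\K$ is dense in $\hat{K}$ (true by definition of a completion), picks $z_0\in\K$ with $\nu(z-z_0)>0$, notes that $z_0\in K_n$ is an ordinary Laurent series in $t^{-1/n}$, truncates \emph{that} series to get $f\in A_n$, and transfers the estimate back to $z$ via $\nu(z-f)\geq\min\{\nu(z-z_0),\nu(z_0-f)\}>0$. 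The trade-off is this: the paper's argument is self-contained modulo the classical structure of the discretely valued fields $K_n$, whereas yours leans on the series description of the completion, which the paper asserts without proof and which would itself normally be established by exactly the kind of density-and-truncation argument the paper deploys here; within the paper's framework your appeal to it is legitimate, but it shifts the real content of the lemma into that cited characterization. Your uniqueness argument (a nonzero element of $\A$ has $\dg\geq0$, so $\dg(f-f')<0$ forces $f=f'$) coincides with the paper's, and your derivation of $\dg(f)=-\nu(z)$ from the ultrametric equality $\nu(z)=\min\{\nu(f),\nu(z-f)\}$ is in fact slightly cleaner than the paper's, since it depends only on the defining property $\nu(z-f)>0$ and not on how $f$ was constructed.
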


\begin{proof}
Let $z \in \hat{K}^{*}$. If $\nu(z)>0$ there is nothing 
to prove, whence we assume $\nu(z)\leq0$.
Since $\K$ is dense in $\hat{K}$, there exists 
$z_0 \in \K$ such that $\nu(z-z_0) >0$.
Then $z_0 \in K_n$, for some $n \in \mathbb{Z}_{>0}$.
Hence, we can write 
$z_0= \sum_{i=-N}^{\infty} a_i t^{-i/n}$, where 
$a_i \in E$ and $a_N \neq 0$.
Note that in this case $\nu(z_0)=-N/n \in \mathbb{Q}$.
Hence, if $\nu(z) \leq 0$, then $\nu(z_0) \leq 0$, so 
that $N \geq 0$.
Thus, the polynomial $f=\sum_{i=-N}^{0} a_i t^{-i/n} $ 
belongs to 
$A_n \subset \A$ and satisfies $\nu(z_0-f) >0$.
We conclude $\nu(z-f) \geq \mathrm{min} 
\lbrace \nu(z-z_0),\nu(z_0-f) \rbrace >0$.
Note that $\dg(f)=N=-\nu(z)$.
In particular $\dg(f)=0$ exactly when $\nu(z) =0$.
Moreover, note that, if $f_1, f_2 \in \A$ 
satisfy $\nu(z-f_1) >0$ and 
$\nu(z-f_2) >0$, then $\nu(f_1-f_2) >0$.
In particular, $\dg(f_1-f_2)<0$,
thus $f_1=f_2$, whence the result follows.
\end{proof}

\begin{definition}\label{def Alg of cf}
Let $\mathcal{M}=\lbrace z \in 
\hat{K} : \nu(z)>0\rbrace$, be the maximal 
ideal in the local ring 
$\mathcal{O}=\lbrace z \in \hat{K} : \nu(z)>0\rbrace$,
and write $\mathcal{M}_{\neq0}=\mathcal{M}\smallsetminus\{0\}$.
Let $z \in \hat{K}$. We recursively define a sequence 
$\mathrm{App}(z)=\lbrace (f_0, z_0),(f_1, z_1),
(f_2, z_2),\dots\rbrace$ by the following algorithm:
\begin{itemize}
    \item[\textbf{Step 1:}] Set $z_0=z$, and $n=0$.
    \item[\textbf{Step 2:}] Given $z_n$ find $f_n\in\A$,
    satisfying $\nu(f_n-z_n)>0$, which exists and it is 
    unique according to Lemma \ref{lemma first approx}.
    \item[\textbf{Step 3:}] If 
    $z_n-f_n\in\mathcal{M}_{\neq0}$, 
    set $z_{n+1}=\frac1{z_n-f_n}$. Note that $\nu(z_{n+1})<0$.
    \item[\textbf{Step 4:}] If, on the other hand, 
    $z_n-f_n=0$, do not define
    $z_{n+1}$ and say that the continued fraction ends.
    \item[\textbf{Step 5:}] If the continued fraction 
    has not ended,
    iterate from step (2) for the next value of $n$.
\end{itemize}
\end{definition}
Next result is immediate from the definition:
\begin{lemma}\label{p24}
The sequence $\mathrm{App}(z)$ thus defined can be easily seen to satisfy the following properties.
 \begin{itemize}   
    \item[(i)] $f_0=0$, when $\nu(z)>0$ and $\dg(f_0)=-\nu(z)$, in any other case,
    \item[(ii)]  $\dg(f_i)=-\nu(z_i) >0$, for any $i>0,$
    \item[(iii)]  if the continued fraction ends at any point
    during the process, then $z\in\tilde{k}$, and
    \item[(iv)] if the continued fraction does not end, then we
    obtain an infinite sequence $\lbrace (f_i, z_i) \rbrace_{i=0}^{\infty} \subseteq \A \times \mathcal{M}_{\neq0}$.
    This is the case whenever $z\notin\tilde{k}$.\qed
\end{itemize} 
\end{lemma}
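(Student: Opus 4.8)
The plan is to read all four claims off directly from the algorithm in Definition~\ref{def Alg of cf}, feeding each successive $z_n$ into Lemma~\ref{lemma first approx}; nothing beyond that lemma and the recursion should be required. I would organize the whole argument around a single observation that controls the sign of the valuation at every stage: $z_0=z$ is arbitrary, whereas each later $z_n$ is the reciprocal of a positive-valuation remainder and hence has strictly negative valuation.

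First I would dispose of (i) and (ii), which are purely local to one step. Part (i) is just Lemma~\ref{lemma first approx} applied to $z_0=z$: the unique $f_0\in\A$ with $\nu(z-f_0)>0$ vanishes exactly when $\nu(z)>0$, and otherwise satisfies $\dg(f_0)=-\nu(z)$. For (ii) I would note that for $i>0$ Step~3 sets $z_i=1/(z_{i-1}-f_{i-1})$ with $z_{i-1}-f_{i-1}\in\mathcal{M}_{\neq0}$, so that $\nu(z_i)=-\nu(z_{i-1}-f_{i-1})<0$; applying Lemma~\ref{lemma first approx} to $z_i$ then forces $f_i\neq0$ and $\dg(f_i)=-\nu(z_i)>0$.

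The only part calling for an actual (if short) argument is (iii), which I would handle by downward induction. Termination at Step~4 means $z_n-f_n=0$, i.e.\ $z_n=f_n\in\A\subseteq\tilde{k}$. Rewriting Step~3 as $z_i=f_i+1/z_{i+1}$ and using that each $z_{i+1}$ is nonzero for $i+1>0$ (by (ii) its valuation is negative, hence finite), I would descend from $i=n$ to $i=0$, concluding at each stage that $z_i\in\tilde{k}$ because $f_i\in\A$ and $z_{i+1}\in\tilde{k}^\times$; this yields $z=z_0\in\tilde{k}$, with the case $n=0$ being immediate. Part (iv) is then essentially the contrapositive: if the process never halts, every remainder $z_n-f_n$ is a nonzero element of positive valuation, hence lies in $\mathcal{M}_{\neq0}$, so Step~5 always applies and produces an infinite sequence with all $f_i\in\A$; and by (iii) a non-terminating expansion can occur only when $z\notin\tilde{k}$.

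I do not anticipate a genuine obstacle here, which is why the statement is flagged as immediate. The one point I would be careful about is the membership bookkeeping in (iv): it is the remainders $z_i-f_i$, rather than the $z_i$ themselves (which have negative valuation for $i>0$), that are placed in $\mathcal{M}_{\neq0}$ by the continuation condition of Step~3, so I would make sure that factor of the product is stated in terms of those remainders when writing the final clause.
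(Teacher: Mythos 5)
Your proof is correct and is precisely the unwinding of Definition~\ref{def Alg of cf} together with Lemma~\ref{lemma first approx} that the paper has in mind when it declares the lemma immediate and offers no written proof: (i) and (ii) follow from one application of that lemma per step, (iii) by the downward induction $z_i=f_i+1/z_{i+1}$, and (iv) as the contrapositive. Your closing caveat is also a genuine catch: as stated in the paper, the pairs $(f_i,z_i)$ do \emph{not} lie in $\A\times\mathcal{M}_{\neq0}$ for $i>0$ (since $\nu(z_i)<0$); it is the remainders $z_i-f_i$ that lie in $\mathcal{M}_{\neq0}$, so your reformulation of that clause is the accurate one.
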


\begin{definition}
If $\w=[g_0,g_1,\dots]$ is any infinite continued fraction, we write 
$\w(n)=[g_0,g_1,\dots,g_n]$ for the truncated expression,
and $\x_n(\w)=\w(n)_{\mathrm{ev}}$ for its value.
By the continued fraction defined by $z$ we mean the expression
$\w_z=[f_0,f_1,f_2,\dots]$, where $\mathrm{App}(z)=\lbrace (f_i, z_i) \rbrace_{i=0}^{\infty}$. The elements $\x_n(z):=\x_n(\w_z)\in\tilde{k}$
are known as the approximants of $z$.
\end{definition}

The remaining part of this section is devoted to prove the following proposition.

\begin{proposition}
\label{prop approx cf}
For each $z\in \hat{K}$ whose continued fraction does not end
before defining $z_{n+1}$, we have the following identity:
\begin{equation}\label{eq approx cf}
\nu\big(z-\x_n(z)\big)=\dg(f_{n+1})+ 2 \sum_{i=1}^{n} \dg(f_i).
\end{equation}
\end{proposition}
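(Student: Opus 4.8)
The plan is to reduce the statement to the classical convergent recurrence, arranged so that the non-archimedean estimates are driven entirely by the positivity $\dg(f_i)>0$ supplied by Lemma \ref{p24}. First I would introduce the numerators and denominators $p_n,q_n\in\A$ of the approximants through the matrix identity $\begin{pmatrix} f_0 & 1 \\ 1 & 0\end{pmatrix}\cdots\begin{pmatrix} f_n & 1 \\ 1 & 0\end{pmatrix}=\begin{pmatrix} p_n & p_{n-1}\\ q_n & q_{n-1}\end{pmatrix}$, with the conventions $p_{-1}=q_{-2}=1$ and $q_{-1}=p_{-2}=0$, so that $\x_n(z)=p_n/q_n$ and each factor has determinant $-1$. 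Taking determinants of the product yields the B\'ezout-type identity $p_nq_{n-1}-p_{n-1}q_n=(-1)^{n+1}$, which is a unit of $E$ and hence has valuation $0$; this is the only input that keeps the error numerator constant.

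Next, since Step 3 of Definition \ref{def Alg of cf} gives $z_j=f_j+z_{j+1}^{-1}$ at every defined step, a telescoping argument (equivalently, induction on $n$) shows that substituting the full tail $z_{n+1}$ in place of $f_{n+1}$ recovers $z$ exactly, i.e. $z=\dfrac{p_nz_{n+1}+p_{n-1}}{q_nz_{n+1}+q_{n-1}}$ as an element of $\hat K$; the hypothesis that the fraction does not end before $z_{n+1}$ is defined is precisely what legitimizes every inversion, and the denominator is nonzero by the valuation estimate below. Subtracting $\x_n(z)=p_n/q_n$ and invoking the determinant identity collapses the numerator to $p_{n-1}q_n-p_nq_{n-1}=(-1)^n$, giving the closed form $z-\x_n(z)=\dfrac{(-1)^n}{q_n\,(q_nz_{n+1}+q_{n-1})}$. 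Since the numerator is a unit, this already yields $\nu\big(z-\x_n(z)\big)=-\nu(q_n)-\nu(q_nz_{n+1}+q_{n-1})$.

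It then remains to evaluate the two valuations on the right. For $\nu(q_n)$ I would induct on $n$ via $q_n=f_nq_{n-1}+q_{n-2}$: from $\dg(f_i)>0$ one gets $\dg(f_nq_{n-1})=\sum_{i=1}^n\dg(f_i)>\sum_{i=1}^{n-2}\dg(f_i)=\dg(q_{n-2})$, so the two summands have distinct valuations and the ultrametric equality forces $\dg(q_n)=\sum_{i=1}^n\dg(f_i)$ with no cancellation, i.e. $\nu(q_n)=-\sum_{i=1}^n\dg(f_i)$. For the second factor, Lemma \ref{p24}(ii) gives $\nu(z_{n+1})=-\dg(f_{n+1})<0$, whence $\nu(q_nz_{n+1})=-\sum_{i=1}^n\dg(f_i)-\dg(f_{n+1})$, strictly below $\nu(q_{n-1})=-\sum_{i=1}^{n-1}\dg(f_i)$ because $\dg(f_n),\dg(f_{n+1})>0$; thus $\nu(q_nz_{n+1}+q_{n-1})=\nu(q_nz_{n+1})$. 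Combining the three identities gives $\nu\big(z-\x_n(z)\big)=\dg(f_{n+1})+2\sum_{i=1}^n\dg(f_i)$, and the computation specializes correctly at $n=0$, where $q_0=1$, $q_{-1}=0$ yield $\nu(z-\x_0(z))=\nu(z_1^{-1})=\dg(f_1)$.

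The argument is essentially bookkeeping once the closed form is in place; the only point demanding genuine care is that we are no longer in a discretely valued field. The main obstacle is to be certain that every ``dominant term'' comparison is valid: that the degree $\dg=-\nu$ on $\A$ is additive on products and max-additive on sums even for fractional exponents, and that the strict inequalities above actually hold so that the ultrametric minimum is attained at a single term and no leading coefficients in $E$ cancel. Each of these reduces to the ultrametric property of $\nu$ together with the positivity $\dg(f_i)>0$ from Lemma \ref{p24}, so no new difficulty arises, but it is exactly where those hypotheses are used.
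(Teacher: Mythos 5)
Your proof is correct, but it follows a genuinely different route from the paper. You use the classical convergent machinery: the matrix identity giving the three-term recurrences $p_n=f_np_{n-1}+p_{n-2}$, $q_n=f_nq_{n-1}+q_{n-2}$, the determinant (B\'ezout) identity $p_nq_{n-1}-p_{n-1}q_n=(-1)^{n+1}$, and the exact tail formula $z=\frac{p_nz_{n+1}+p_{n-1}}{q_nz_{n+1}+q_{n-1}}$, which collapse the error to the closed form $z-\x_n(z)=\frac{(-1)^n}{q_n(q_nz_{n+1}+q_{n-1})}$; the valuation computation then reduces to two ultrametric dominance estimates, both legitimate since $\nu$ is a valuation on $\hat{K}$ (so $\dg$ is additive on products and the minimum is attained at a single term when valuations are distinct), and your strict inequalities do hold thanks to $\dg(f_i)>0$ from Lemma \ref{p24}(ii). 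The paper instead works with the composed Moebius maps $\rho_n=\sigma_n\circ\cdots\circ\sigma_0$ of Definition \ref{def rho} and proves the identity $\nu\big(z-\x_n(z)\big)=\nu\left(\frac{\rho_n(z)}{\rho_n'(z)}\right)$ by an induction that exploits the shift $\tau=\sigma_0(z)$, the valuation of the logarithmic derivative being computed separately by a chain-rule induction (Lemma \ref{lemma log derivate and degrees}). Your approach is more self-contained and buys more per unit of work: it simultaneously yields $\dg(q_n)=\sum_{i=1}^n\dg(f_i)$ and the error formula $\nu\left(z-\frac{p_n}{q_n}\right)=\dg(f_{n+1})+2\dg(q_n)$, i.e., most of the quantitative content of Theorem \ref{main teo 1}, which the paper only obtains later by introducing essentially your matrix identity inside that theorem's proof. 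What the paper's derivative formalism buys instead is reusability: the quantity $\nu\big(\rho_n'(a)\big)$ reappears in Proposition \ref{prop fund domain and closing umbrellas} to track how $\rho_n$ moves points of the Berkovich space $\h$, so the logarithmic-derivative lemma serves double duty in the geometric sections, whereas your argument would have to be supplemented there.
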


In order to prove \ref{prop approx cf} we introduce the following notations:

\begin{definition}
 \label{def rho}
Let $\lbrace f_i \rbrace_{i=1}^n$ as in Def. \ref{def Alg of cf}, and let us denote by $\sigma_i$ the Moebius Transformation given by $\sigma_i(x)= \frac{1}{x-f_i}$, for $x \in \mathbb{P}^1(\hat{K})$.
Let us write $\rho_n= \sigma_n \circ \cdots \circ \sigma_0$. 
\end{definition}

Next result is straightforward:

\begin{lemma}\label{lemma cf in terms of rho}
Let $z\in \hat{K}$ whose continued fraction does not end
before defining $z_{n+1}$, and let 
$\mathrm{App}(z)=\lbrace (f_0, z_0),\dots \rbrace$ be its 
associated sequence. Then, the following identities hold:
\begin{itemize}
\item[(1)] $\rho_n^{-1}(\infty)=\x_n(z)$, and
\item[(2)] $\rho_n(z)=z_{n+1}$.\qed
\end{itemize}
\end{lemma}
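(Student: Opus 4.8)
The plan is to reduce both identities to the defining recursion of the algorithm in Definition \ref{def Alg of cf} together with the standard way a finite continued fraction is evaluated. First I would record the inverse of each elementary factor: since $\sigma_i(x)=\frac{1}{x-f_i}$, solving $y=\frac{1}{x-f_i}$ for $x$ gives $\sigma_i^{-1}(y)=f_i+\frac{1}{y}$, understood as an automorphism of $\mathbb{P}^1(\hat{K})$ with the usual conventions $\sigma_i^{-1}(\infty)=f_i$ and $\sigma_i^{-1}(0)=\infty$.

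For identity (2) I would argue by induction on $i$ that $\sigma_i\circ\cdots\circ\sigma_0(z)=z_{i+1}$ for all $0\le i\le n$. The base case is $\sigma_0(z)=\frac{1}{z-f_0}=\frac{1}{z_0-f_0}=z_1$, valid because $z_0=z$ and, since the fraction has not ended, $z_0-f_0\neq0$. For the inductive step I apply $\sigma_i$ to the identity $\sigma_{i-1}\circ\cdots\circ\sigma_0(z)=z_i$ and invoke Step 3 of Definition \ref{def Alg of cf}, which says precisely $\sigma_i(z_i)=\frac{1}{z_i-f_i}=z_{i+1}$; this is legitimate because the hypothesis guarantees that $z_{i+1}$ is defined. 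Setting $i=n$ yields $\rho_n(z)=z_{n+1}$.

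For identity (1) I would evaluate $\rho_n^{-1}=\sigma_0^{-1}\circ\cdots\circ\sigma_n^{-1}$ at $\infty$ from the inside out, showing by induction on $j$ that $\sigma_{n-j}^{-1}\circ\cdots\circ\sigma_n^{-1}(\infty)=[f_{n-j},\dots,f_n]_{\mathrm{ev}}$. The base case $j=0$ is $\sigma_n^{-1}(\infty)=f_n$. For the step, applying $\sigma_{n-j-1}^{-1}$ to the previous value and using the inverse formula gives $f_{n-j-1}+\frac{1}{[f_{n-j},\dots,f_n]_{\mathrm{ev}}}$, which is exactly $[f_{n-j-1},f_{n-j},\dots,f_n]_{\mathrm{ev}}$ by the elementary recursion $[g_0,\dots,g_m]_{\mathrm{ev}}=g_0+\frac{1}{[g_1,\dots,g_m]_{\mathrm{ev}}}$ for the value of a finite continued fraction defined in (\ref{eq2}). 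Taking $j=n$ gives $\rho_n^{-1}(\infty)=[f_0,\dots,f_n]_{\mathrm{ev}}=\x_n(z)$.

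The argument is entirely formal, so I do not expect a genuine obstacle; the only point deserving mild care is the consistent bookkeeping of the $\mathbb{P}^1(\hat{K})$ conventions at $\infty$ and at the poles $f_i$, which is what makes every composition above well defined even when an intermediate value happens to equal some $f_i$ or $\infty$.
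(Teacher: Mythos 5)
Your proof is correct and is essentially the paper's own argument: the paper states this lemma with a \qed{} and no proof, calling it straightforward, and your two inductions --- unwinding \textbf{Step 3} of Definition \ref{def Alg of cf} for identity (2), and evaluating $\rho_n^{-1}=\sigma_0^{-1}\circ\cdots\circ\sigma_n^{-1}$ at $\infty$ via the recursion $[g_0,\dots,g_m]_{\mathrm{ev}}=g_0+\frac{1}{[g_1,\dots,g_m]_{\mathrm{ev}}}$ for identity (1) --- are exactly the routine details being suppressed. The one point you gesture at that deserves to be made explicit is that each tail value $[f_{n-j},\dots,f_n]_{\mathrm{ev}}$ with $n-j\geq 1$ is nonzero and finite, which follows from $\dg(f_i)>0$ for $i>0$ (Lemma \ref{p24}(ii), in the spirit of Lemma \ref{lemma valuation of cf}), so no degenerate case arises beyond the base evaluation $\sigma_n^{-1}(\infty)=f_n$.
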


\begin{lemma}\label{lemma log derivate and degrees}
In the notation of Lemma \ref{lemma cf in terms of rho}, we have:
\begin{equation}\label{eq relation nu}
\nu \left(\frac{\rho_n(z)}{\rho_n'(z)}\right)= 
\dg(f_{n+1})+ 2 \sum_{i=1}^{n} \dg(f_i).  
\end{equation}
\end{lemma}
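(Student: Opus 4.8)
The plan is to compute the formal derivative of the rational map $\rho_n$ by the chain rule and then read off both the numerator and denominator valuations from the continued fraction data. Since each $\sigma_i(x)=\frac{1}{x-f_i}=(x-f_i)^{-1}$ is a Moebius transformation, its derivative is $\sigma_i'(x)=-(x-f_i)^{-2}=-\sigma_i(x)^2$. This is the single computational observation that drives everything: the derivative of each factor can be rewritten in terms of the \emph{value} of that same factor, which in turn is one of the iterates produced by the algorithm in Definition \ref{def Alg of cf}.

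The key step is to unwind the chain rule along $\rho_n=\sigma_n\circ\cdots\circ\sigma_0$. Writing the intermediate points of the composition as $\sigma_{i-1}\circ\cdots\circ\sigma_0(z)$, one checks that these coincide exactly with the iterates $z_i$ of the algorithm, because $z_{i+1}=\sigma_i(z_i)$ by construction and $z_0=z$; this is also consistent with $\rho_n(z)=z_{n+1}$ from Lemma \ref{lemma cf in terms of rho}. Hence the argument fed into $\sigma_i'$ is precisely $z_i$, and $\sigma_i'(z_i)=-z_{i+1}^2$. Multiplying the factors gives the closed form
\begin{equation*}
\rho_n'(z)=\prod_{i=0}^{n}\sigma_i'(z_i)=\prod_{i=0}^{n}\bigl(-z_{i+1}^2\bigr)=(-1)^{n+1}\prod_{j=1}^{n+1}z_j^2.
\end{equation*}
Here it is important to note that the hypothesis ``the continued fraction does not end before defining $z_{n+1}$'' guarantees that each $z_{i+1}$ for $i\le n$ is actually defined and nonzero (indeed $\nu(z_{i+1})<0$ by Step 3), so the derivative is a well-defined nonzero element and the valuation of the product is the sum of the valuations.

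From the closed form the conclusion is immediate. Taking valuations and using $\dg(f_j)=-\nu(z_j)$ for $j\ge1$ from Lemma \ref{p24}(ii), I obtain $\nu\bigl(\rho_n'(z)\bigr)=2\sum_{j=1}^{n+1}\nu(z_j)=-2\sum_{j=1}^{n+1}\dg(f_j)$. Combining this with $\nu\bigl(\rho_n(z)\bigr)=\nu(z_{n+1})=-\dg(f_{n+1})$ yields
\begin{equation*}
\nu\!\left(\frac{\rho_n(z)}{\rho_n'(z)}\right)=-\dg(f_{n+1})+2\sum_{j=1}^{n+1}\dg(f_j)=\dg(f_{n+1})+2\sum_{i=1}^{n}\dg(f_i),
\end{equation*}
which is exactly \eqref{eq relation nu}. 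The only genuinely delicate point is the bookkeeping of the chain rule argument—verifying that the intermediate evaluation points of the composition are the algorithm's iterates $z_i$ and tracking the index shift in $\prod z_{j}^2$—but once this identification is made the statement follows by a direct valuation count, with no convergence or estimation subtleties required.
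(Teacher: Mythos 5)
Your proof is correct, and at bottom it is the same computation as the paper's: both arguments rest on the chain rule for the composition $\rho_n=\sigma_n\circ\cdots\circ\sigma_0$, on the identification of the intermediate evaluation points with the algorithm's iterates (namely $z_{i+1}=\sigma_i(z_i)$, so $\rho_n(z)=z_{n+1}$ as in Lemma \ref{lemma cf in terms of rho}), and on the dictionary $\dg(f_j)=-\nu(z_j)$ from Lemma \ref{p24}(ii). The difference is purely organizational: the paper peels off one map at a time and runs an induction on $n$, its inductive step being exactly one factor of your product, whereas you unroll the chain rule completely into the closed form $\rho_n'(z)=(-1)^{n+1}\prod_{j=1}^{n+1}z_j^{2}$ and take valuations once. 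Your packaging buys a small bonus: the closed form gives $\nu\big(\rho_n'(z)\big)=-2\sum_{i=1}^{n+1}\dg(f_i)$ immediately, which the paper must record separately as a corollary after the lemma. You also correctly flag the two points where care is needed, namely that the hypothesis ``the fraction does not end before defining $z_{n+1}$'' makes every factor $z_{i+1}$ defined with $\nu(z_{i+1})<0$ (hence nonzero, so valuations add over the product), and that the chain rule here is a formal identity for rational maps, valid over any field with no convergence input.
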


\begin{proof}
We proceed by induction on $n$.
Firstly, for $n=0$, we have $\rho_0=\sigma_0$, so that 
$(\rho_0)'(z)=\frac{1}{(z-f_0)^2}$. 
Then $\frac{\rho_0(z)}{\rho_0'(z)}=z-f_0=1/z_1$, whence 
$\nu \left(\frac{\rho_0(z)}{\rho_0'(z)}\right) =\dg(f_1)$, as desired.
Now, assume that Eq. \eqref{eq relation nu} holds for $n \in \mathbb{Z}_{\geq 0}$.
Since $\rho_{n+1}=\sigma_{n+1} \circ \rho_n$, we have 
$\nu \left(\frac{\rho_{n+1}(z)}{\rho_{n+1}'(z)}\right)=
\nu \left(\frac{\sigma_{n+1}(\rho_{n}(z))}{\sigma_{n+1}'(\rho_n(z))
\cdot \rho'_n(z)}\right).$
Since $\sigma_{n+1}'(x)=-\frac{1}{(x-f_{n+1})^2}$, 
the valuation of $\frac{\rho_{n+1}(z)}{\rho_{n+1}'(z)}$ equals 
$ \nu \left( \rho_n(z)-f_{n+1}\right) - \nu\big(\rho_n(z)\big)+\nu\left(\frac{\rho_n(z)}{\rho_n'(z)} \right)$.
Then, it follows from Lemma \ref{lemma cf in terms of rho} and from the inductive hypothesis that:
$$\nu \left(\frac{\rho_{n+1}(z)}{\rho_{n+1}'(z)}\right)= 
\nu \left( z_{n+1}-f_{n+1}\right) - \nu(z_{n+1})+ \dg(f_{n+1}) + 
2 \sum_{i=1}^{n} \dg(f_i).$$
Since $\nu \left( z_{n+1}-f_{n+1}\right)=-\nu(z_{n+2})=\dg(f_{n+2})$ 
and $\nu(z_{n+1})=-\dg(f_{n+1})$, according to 
\textbf{Step 3} in 
Def. \ref{def Alg of cf}
and Lemma \ref{p24}(ii), the result follows.
\end{proof}

\begin{corollary}
In the notation of Lemma \ref{lemma cf in terms of rho},
the following identity holds:
$\nu(\rho_n'(z))=-2\sum_{i=1}^{n+1} \dg(f_i)$.
\end{corollary}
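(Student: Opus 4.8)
The plan is to derive the corollary directly from Lemma \ref{lemma log derivate and degrees} together with the explicit computation of $\nu(\rho_n(z))$. The key observation is that Lemma \ref{lemma log derivate and degrees} already gives us the valuation of the quotient $\rho_n(z)/\rho_n'(z)$, so it suffices to compute $\nu(\rho_n(z))$ separately and then use the identity
\begin{equation*}
\nu(\rho_n'(z)) = \nu(\rho_n(z)) - \nu\!\left(\frac{\rho_n(z)}{\rho_n'(z)}\right).
\end{equation*}
First I would invoke Lemma \ref{lemma cf in terms of rho}(2), which states $\rho_n(z)=z_{n+1}$. Combined with Lemma \ref{p24}(ii), which gives $\dg(f_i)=-\nu(z_i)$ for $i>0$, this yields $\nu(\rho_n(z))=\nu(z_{n+1})=-\dg(f_{n+1})$.

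Next I would substitute both pieces into the splitting identity. From Lemma \ref{lemma log derivate and degrees} we have
\begin{equation*}
\nu\!\left(\frac{\rho_n(z)}{\rho_n'(z)}\right) = \dg(f_{n+1}) + 2\sum_{i=1}^{n}\dg(f_i),
\end{equation*}
and so
\begin{equation*}
\nu(\rho_n'(z)) = -\dg(f_{n+1}) - \left(\dg(f_{n+1}) + 2\sum_{i=1}^{n}\dg(f_i)\right) = -2\dg(f_{n+1}) - 2\sum_{i=1}^{n}\dg(f_i).
\end{equation*}
The final step is simply to recognize the right-hand side as $-2\sum_{i=1}^{n+1}\dg(f_i)$, absorbing the $-2\dg(f_{n+1})$ term into the sum as its top index. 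This gives exactly the asserted identity $\nu(\rho_n'(z)) = -2\sum_{i=1}^{n+1}\dg(f_i)$.

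I do not anticipate any genuine obstacle here, as the statement is a one-line algebraic rearrangement of results already established. The only point requiring minor care is the valuation of the logarithmic-derivative quotient: since $\nu$ is a valuation, $\nu(a/b)=\nu(a)-\nu(b)$ holds unconditionally for nonzero $a,b$, and both $\rho_n(z)=z_{n+1}$ and $\rho_n'(z)$ are nonzero precisely because the continued fraction does not terminate before defining $z_{n+1}$ (the hypothesis carried over from Lemma \ref{lemma cf in terms of rho}). Thus the subtraction is legitimate, and no convergence or nonvanishing issue arises.
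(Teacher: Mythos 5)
Your proof is correct and is essentially identical to the paper's own argument: both substitute $\rho_n(z)=z_{n+1}$ into Lemma \ref{lemma log derivate and degrees}, use $\nu(z_{n+1})=-\dg(f_{n+1})$ from Lemma \ref{p24}(ii), and rearrange. The paper merely states this in one line, whereas you spell out the subtraction and the nonvanishing justification explicitly.
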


\begin{proof}
We set $\rho_n(z)=z_{n+1}$ in the lemma and use the fact
that $\nu(z_{n+1})=-\dg(f_{n+1})$.
\end{proof}

\begin{lemma}\label{lemma valuation of cf}
Let $\w=[0, f_1, \dots]$ be either a finite or an 
infinite continued fraction
satisfying $\dg(f_i)>0$ for $i>0$. Assume $z=\w_{\mathrm{ev}}\in\hat K$ is
defined, i.e., either $l(\w)$ is finite or 
the sequence $\x_n(\w)$ converges.
Then we have $\nu(z)=-\nu(f_1)=\dg(f_1)> 0$.
\end{lemma}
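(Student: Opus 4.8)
The plan is to split into the finite and infinite cases, dispatch the finite case by induction on the length, and then bootstrap to the infinite case using continuity of the valuation. Throughout I will use only the ultrametric identity $\nu(a+b)=\min\{\nu(a),\nu(b)\}$ when $\nu(a)\neq\nu(b)$, together with the definition $\dg(f)=-\nu(f)$.

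First I would treat the finite case, proving by induction on $N=l(\w)$ that every finite continued fraction $[0,f_1,\dots,f_N]$ with $\dg(f_i)>0$ has a value $z$ that is defined, nonzero, and satisfies $\nu(z)=\dg(f_1)$. The base case $N=1$ is immediate, since $[0,f_1]_{\mathrm{ev}}=1/f_1$ gives $\nu(z)=-\nu(f_1)=\dg(f_1)>0$. For the inductive step I write $z=\frac{1}{f_1+y}$ with $y=[0,f_2,\dots,f_N]_{\mathrm{ev}}$, which is the value of a finite continued fraction of the same shape. The inductive hypothesis yields $\nu(y)=\dg(f_2)>0$, while $\nu(f_1)=-\dg(f_1)<0$; since these two valuations are distinct, the ultrametric identity gives $\nu(f_1+y)=\nu(f_1)=-\dg(f_1)$. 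In particular $f_1+y\neq0$, so $z$ is defined and $\nu(z)=-\nu(f_1+y)=\dg(f_1)>0$, as claimed.

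Next I would handle the infinite case. Each approximant $\x_n(\w)=[0,f_1,\dots,f_n]_{\mathrm{ev}}$ is a finite continued fraction of the same form, so the finite case gives $\nu(\x_n(\w))=\dg(f_1)$ for every $n\geq1$. Because the approximants all have the \emph{same finite} valuation $\dg(f_1)$ while $\x_n(\w)\to z$, the limit cannot be $0$, for otherwise $\nu(\x_n(\w))\to\infty$. Knowing $z\neq0$, convergence means $\nu(z-\x_n(\w))\to\infty$, so for $n$ large we have $\nu(z-\x_n(\w))>\dg(f_1)=\nu(\x_n(\w))$; writing $z=\x_n(\w)+(z-\x_n(\w))$ and applying the ultrametric identity once more gives $\nu(z)=\nu(\x_n(\w))=\dg(f_1)$. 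Since $\dg(f_1)=-\nu(f_1)$ by definition, this is exactly $\nu(z)=-\nu(f_1)=\dg(f_1)>0$.

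The computations are routine once the ultrametric identity is invoked in the right direction; the only step needing genuine care is the passage to the limit in the infinite case, namely ruling out $z=0$ and transferring the constant valuation of the approximants to the limit. This is precisely the place where the non-discreteness of $\nu$ could cause concern, but since $\dg(f_1)$ is a single fixed finite value independent of $n$, the argument goes through without difficulty.
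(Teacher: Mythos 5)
Your proof is correct and takes essentially the same route as the paper: induction on the length in the finite case (using the ultrametric equality $\nu(f_1+y)=\nu(f_1)$, valid since $\nu(y)>0>\nu(f_1)$), followed by a limiting argument in the infinite case. The only difference is that you spell out in detail what the paper compresses into an appeal to ``continuity of the valuation outside of $0$'' --- ruling out $z=0$ via the constant valuation of the approximants and then transferring that valuation to the limit --- which is an elaboration of the same idea rather than a different argument.
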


\begin{proof}
Firstly, assume that $N=l(\w) < \infty$.
We prove the statement by induction on $N \in \mathbb{Z}_{>0}$.
For $n=1$, we have $z=1/f_1 \in k$, and the result is immediate. 
Assume the statement holds for any expression with a
given length $N$.
Let $\w=[0, f_1, \dots, f_{N+1}]$, 
$\w'=[0, f_2, \cdots, f_{N+1}]$ and 
$z=\w_{\mathrm{ev}}\in\hat K$, 
$z'=\w'_{\mathrm{ev}}\in\hat K$.
Then $\nu(z') \geq 0$ by the inductive assumption.
In particular, $\nu(f_1)=\nu(f_1+z')$.
Thus $\nu(z)=\nu\big(1/(f_1+z')\big)=-\nu(f_1)$.
Finally, assume that $N=l(\w)=\infty$.
Since $z$ is the limit of 
$\lbrace \x_n(\w) \rbrace_{n=0}^{\infty}$,
and $\x_n(\w)$ is the evaluation of a finite expression, 
the result follows
from the continuity of the valuation $\nu$ outside of $0$. 
\end{proof}

\begin{proof}[Proof of Prop. \ref{prop approx cf}]
In the notation of Lemma \ref{lemma cf in terms of rho}, 
it suffices to prove that $\nu\big(z-\x_n(z)\big)=
\nu \left(\frac{\rho_n(z)}{\rho_n'(z)}\right)$. 
We proceed by induction on $n \in \mathbb{Z}_{\geq 0}$.
For $n=0$, equality holds by the same argument that
was given in the proof of Lemma 
\ref{lemma log derivate and degrees}.
Now, assume that $\nu\big(z-\x_n(z)\big)=
\nu \left(\frac{\rho_n(z)}{\rho_n'(z)}\right)$, 
for some fixed $n\in \mathbb{Z}_{\geq 0}$, and
for all $z \in \hat{K}$ for which $z_{n+1}$ is defined.
Let us write $\tau=\sigma_0(z)$.
The sequence $\mathrm{App}(\tau)$ is exactly 
$\lbrace (f_1, z_1),(f_2, z_2), \rbrace$, a shift of 
$\mathrm{App}(z)$.
In particular, the Moebius transformation 
$\tilde{\rho}_n=\rho_{n+1} \circ \sigma_0^{-1}$
is precisely the $n$-th term of the sequence given by 
Def. \ref{def rho}, when $z$ is replaced by $\tau$.
Then, it follows from the inductive hypothesis that 
$\nu\big(\tau-\x_n(\tau)\big)=\nu 
\left(\frac{\Tilde{\rho}_n(\tau)}{\Tilde{\rho}_n'(\tau)}\right)$.
On one hand, it follows from Lemma \ref{lemma cf in terms of rho}(1) that $\x_n(\tau)=\sigma_0\big(\x_{n+1}(z)\big)$,
so that $\nu\big(\tau-\x_n(\tau)\big)=\nu\Big(\sigma_0(z)-\sigma_0\big(\x_{n+1}(z)\big)\Big)$, whence
$$\nu\big(\tau-\x_n(\tau)\big)=\nu\big(z-\x_{n+1}(z)\big)-\nu(z-f_0)-\nu\big(\x_{n+1}(z)-f_0\big).$$
Moreover, it follows from Lemma \ref{lemma valuation of cf} that $\nu\big(\x_{n+1}(z)-f_0\big)=-\nu(f_1)=\nu(z-f_0)$.
Hence, we get:
\begin{equation}\label{eq 1 ind}
\nu \left(\frac{\Tilde{\rho}_n(\tau)}{\Tilde{\rho}_n'(\tau)}\right)=\nu\big(\tau-\x_{n+1}(\tau)\big)= \nu\big(z-\x_{n+1}(z)\big)-2\nu(z-f_0).   
\end{equation}
On the other hand, applying the chain rule 
in the denominator, we have 
\begin{equation}\label{eq 2 ind}
\nu \left(\frac{\Tilde{\rho}_n(\tau)}{\Tilde{\rho}_n'(\tau)}\right)=\nu \left(\frac{\rho_{n+1}(z)}{\rho_{n+1}'(z)} \right)+\nu \big( \sigma_0'(z)\big). 
\end{equation}
Moreover, it is straightforward that 
$\nu \big( \sigma_0'(z)\big)=-2\nu(z-f_0)$.
Thus, it follows from Eq. \eqref{eq 1 ind} and 
\eqref{eq 2 ind} that 
$\nu\big(z-\x_{n+1}(z)\big)=\nu 
\left(\frac{\rho_{n+1}(z)}{\rho_{n+1}'(z)}\right)$, 
whence the result follows.
\end{proof}

\section{Proof of Theorems \ref{main teo 1} and \ref{main teo 2}}\label{sec convergence}

Next result is useful in order to prove that the sequence 
$\lbrace \x_n(z) \rbrace_{n=0}^{\infty}$ defined in 
\ref{def Alg of cf} converges to $z$.
It also implies that the expression of $z$ as a 
continued fraction is unique.

\begin{lemma}\label{lemma equal cf}
Let $z,z'$ be two element of $\hat{K}$, and let us write $\mathrm{App}(z)=\{(f_0,z_0),(f_1,z_1),\dots\}$
and $\mathrm{App}(z')=\{(g_0,z'_0),(g_1,z'_1),\dots\}$.
Assume that $z_m$ is defined and
 $\nu(z-z') > 2 \sum_{i=1}^m \dg(f_{i})$, for certain 
 $m \in \mathbb{Z}_{\geq 0}$. Then $z'_m$ is defined
 and $f_i=g_i$, for all $i \leq m$.
\end{lemma}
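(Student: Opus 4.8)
The plan is to argue by induction on $m$. For the base case $m=0$ the hypothesis reads $\nu(z-z')>0$ (the empty sum being zero), and $z'_0=z'$ is trivially defined. Since $f_0$ is, by Lemma~\ref{lemma first approx}, the unique element of $\A$ with $\nu(z-f_0)>0$, the ultrametric inequality gives $\nu(z'-f_0)\geq\min\{\nu(z'-z),\nu(z-f_0)\}>0$, so $f_0$ is also the first approximation of $z'$; by uniqueness $g_0=f_0$.

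For the inductive step I would exploit the shift of the algorithm already used in the proof of Prop.~\ref{prop approx cf}: since $z_m$ is defined with $m\geq1$, the transformation $\sigma_0(x)=1/(x-f_0)$ sends $z$ to $\tau:=z_1$, and $\mathrm{App}(\tau)$ is the tail $\{(f_1,z_1),(f_2,z_2),\dots\}$ of $\mathrm{App}(z)$. The heart of the argument is the valuation identity
\[
\nu\bigl(\sigma_0(z)-\sigma_0(z')\bigr)=\nu(z-z')-\nu(z-f_0)-\nu(z'-f_0),
\]
which follows from $\sigma_0(z)-\sigma_0(z')=(z'-z)/\bigl((z-f_0)(z'-f_0)\bigr)$. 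Here $\nu(z-f_0)=-\nu(z_1)=\dg(f_1)$ by Lemma~\ref{p24}(ii), using that $z_1$ is defined. The key point is then to show $\nu(z'-f_0)=\dg(f_1)$ as well: writing $z'-f_0=(z-f_0)+(z'-z)$ and using $\nu(z-z')>2\sum_{i=1}^m\dg(f_i)\geq2\dg(f_1)>\dg(f_1)=\nu(z-f_0)$, the strict form of the ultrametric inequality forces $\nu(z'-f_0)=\dg(f_1)$. In particular $z'-f_0$ has positive finite valuation, so $z'_1=1/(z'-g_0)$ is defined (using $g_0=f_0$) and equals $\tau':=\sigma_0(z')$, with $\mathrm{App}(\tau')=\{(g_1,z'_1),\dots\}$.

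Combining these computations yields $\nu(\tau-\tau')=\nu(z-z')-2\dg(f_1)>2\sum_{i=2}^m\dg(f_i)=2\sum_{i=1}^{m-1}\dg(f_{i+1})$, while $\tau_{m-1}=z_m$ is defined. The inductive hypothesis applied to the pair $(\tau,\tau')$ at index $m-1$ then gives that $\tau'_{m-1}=z'_m$ is defined and that the coefficients agree, i.e. $f_{i+1}=g_{i+1}$ for $i\leq m-1$; together with $g_0=f_0$ this yields $f_i=g_i$ for all $i\leq m$. I expect the main obstacle to be the middle step: verifying that $z'$ does not cause the algorithm to terminate prematurely, which is exactly the content of the strict ultrametric computation showing $\nu(z'-f_0)$ is finite and positive. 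Everything else is routine bookkeeping with the shift and the recursive degree relations of Lemma~\ref{p24}.
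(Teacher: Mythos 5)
Your proposal is correct and takes essentially the same route as the paper's proof: the same ultrametric-plus-uniqueness argument giving $f_0=g_0$, the same key identity $\nu\bigl(\sigma_0(z)-\sigma_0(z')\bigr)=\nu(z-z')-2\dg(f_1)$ (resting on the observation that $\nu(z'-f_0)=\nu(z-f_0)=\dg(f_1)$ by the strict ultrametric inequality), and the same shift of the algorithm. The only difference is organizational — you induct on $m$ by recursing through the shifted pair $(z_1,z_1')$, while the paper fixes $m$ and runs a forward induction on the index $i\leq m$, propagating the inequality $\nu(z_{j}-z_{j}')>2\sum_{t=j}^m\dg(f_t)$ — which is a cosmetic restructuring of the identical argument.
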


\begin{proof}
We prove this result by induction on $i \leq m$.
Indeed, since the assumption implies the inequality $\nu(z-z')>0$, 
we have $\nu(z)>0$ exactly when $\nu(z')>0$. 
In other words $f_0=0$ precisely when $g_0=0$.
Furthermore, since $z'-f_0=(z-f_0)+(z'-z)$ and $\nu(z-f_0)>0$, 
we also have $\nu(z'-f_0)>0$. But $g_0$ is the unique element in $
\A$ satisfying $\nu(z'-g_0)>0$ according to Lemma \ref{lemma first approx}.
Thus, we have $f_0=g_0$. 

Setting $n=0$ in Proposition \ref{prop approx cf} we 
have $\dg(f_1)=\nu(z-f_0)$. If we assume that $m\geq 1$, 
then $\nu(z-f_0)$ is strictly smaller than $\nu(z-z')>0$. 
In particular $z'\neq g_0$, so $z'_1$ is defined.
We conclude that 
$\dg(f_1)=\nu(z-f_0)=\nu(z'-g_0)=\dg(g_1)$. 
Now we observe that
$$\sigma_0(z)-\sigma_0(z')=\frac1{z-f_0}-\frac1{z'-g_0}=
\frac{(z-z')-(f_0-g_0)}{(z-f_0)(z'-g_0)},$$
which equals $\frac{z-z'}{(z-f_0)(z'-g_0)}$,
since $f_0=g_0$.
Hence, $$\nu\big(\sigma_0(z)-\sigma_0(z')\big)=
\nu(z-z')-2\deg(f_1)> 2 \cdot \sum_{j=2}^m \dg(f_j)\geq0.$$
In particular, $\nu\big(\sigma_0(z)-f_1\big)>0$ implies
$\nu\big(\sigma_0(z')-f_1\big)>0$, and by uniqueness 
we conclude 
$f_1=g_1$. Note that $\sigma_0(z)=z_1$ and 
$\sigma_0(z')=z'_1$.

Now, assume that $f_j=g_j$, for all $j \leq i < m$. 
By Lemma \ref{lemma cf in terms of rho}, we have $\rho_j(z)=z_{j+1}$
and $\rho_j(z')=z'_{j+1}$, for all $j\leq i-1$.
Then, it is a straightforward
induction to prove that
$\nu(z_{j+1}-z'_{j+1}) >2 \cdot \sum_{t=j+1}^m \dg(f_t)\geq0$,
for said $j$, arguing as in the previous paragraph. In particular
$\nu(z_i-z'_i)>\nu(z_i-f_i)>0$, so $z'_{i+1}$ is defined.
Now we can give one more inductive step and conclude
$\nu(z_{i+1}-z'_{i+1}) >2 \cdot \sum_{t=i+1}^m \dg(f_t)\geq0$.
This also implies that $\nu(z'_{i+1}-f_{i+1})>0$, 
and again  $f_{i+1}=g_{i+1}$ by uniqueness.
\end{proof}

Next we prove the convergence of the continued fraction associated to any
element in $\hat{K}$ whose continued fraction does not end.

\begin{proposition}\label{Prop conv fc}
For each $z \in \hat{K}$ whose continued fraction 
fails to end, the sequence $\sum_{i=1}^{\infty} \dg (f_i)$ diverges to $\infty$.
In particular, the element $z$ equals the limit
$\lim_{n \to \infty} \x_n(z)=[f_0, f_1, \dots ]_{\mathrm{ev}}$
of its approximants.   
\end{proposition}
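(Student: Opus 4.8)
The plan is to derive both assertions from Proposition \ref{prop approx cf}, which states $\nu\big(z-\x_n(z)\big)=\dg(f_{n+1})+2\sum_{i=1}^n\dg(f_i)$. Writing $S_n=\sum_{i=1}^n\dg(f_i)$, I first note that the ``in particular'' clause is automatic once the divergence $S_n\to\infty$ is known: since $\dg(f_{n+1})>0$ we then have $\nu\big(z-\x_n(z)\big)\ge 2S_n\to\infty$, so $\x_n(z)\to z$ and hence $z=[f_0,f_1,\dots]_{\mathrm{ev}}$ by the very definition of the evaluation of an infinite continued fraction. Thus the entire content lies in proving that the increasing sequence $S_n$ diverges.

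For that I would argue by contradiction, assuming $S_n\le L<\infty$ for all $n$, and compare $z$ with a nearby element of some $K_M$, whose continued fraction is forced to have coefficients of degree bounded below. Since $\K=\bigcup_M K_M$ is dense in $\hat K$, I choose $w\in K_M$ (for some fixed $M$) with $\nu(z-w)>2L$. Running the algorithm of Definition \ref{def Alg of cf} on $w$, I claim by induction that every coefficient $g_i$ of its continued fraction lies in $A_M=E[t^{1/M}]$: the first approximation $g_0$ furnished by Lemma \ref{lemma first approx} is the Puiseux-polynomial part of $w\in E((t^{-1/M}))$, hence lies in $A_M$, and since $K_M$ is a field the successive $w_{i+1}=1/(w_i-g_i)$ stay in $K_M$, so the same reasoning applies at each step. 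Next I apply Lemma \ref{lemma equal cf} with $z'=w$: for every $m$ the hypotheses ``$z_m$ defined'' (the continued fraction of $z$ does not end) and $\nu(z-w)>2L\ge 2S_m$ hold, so $w_m$ is defined and $f_i=g_i$ for all $i\le m$. As $m$ is arbitrary, the continued fraction of $w$ does not end and $f_i=g_i\in A_M$ for every $i$. But then $\dg(f_i)=\dg(g_i)\in\tfrac1M\mathbb{Z}$ with $\dg(f_i)>0$ for $i>0$, forcing $\dg(f_i)\ge 1/M$; hence $S_n\ge n/M\to\infty$, contradicting $S_n\le L$. Therefore $S_n\to\infty$, and the argument of the first paragraph completes the proof.

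The hard part is precisely to exclude the possibility that the degrees $\dg(f_i)$ shrink to $0$ fast enough for $S_n$ to converge. This is exactly the new, non-discrete phenomenon flagged in the introduction and realized by certain type IV points in Theorem \ref{main teo 3}; for a genuine element $z\in\hat K$ it cannot occur. The mechanism above is what rules it out: transferring the bounded-denominator property of a sufficiently close $w\in K_M$ to $z$ through the stability Lemma \ref{lemma equal cf} produces a uniform lower bound $\dg(f_i)\ge 1/M$, and it is this uniformity, unavailable from the individual inequalities $\dg(f_i)>0$ alone, that yields divergence.
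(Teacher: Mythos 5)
Your proof is correct, and its skeleton is the same as the paper's: argue by contradiction, use density of $\K$ in $\hat K$ to pick an element of some $K_M$ within distance $2L$ of $z$, transfer its coefficients to those of $z$ via Lemma \ref{lemma equal cf}, and derive a contradiction from the uniform lower bound $\dg \geq 1/M$ valid for nonconstant elements of $A_M$. The one substantive difference is how the $A_M$-expansion of the nearby point $w$ is obtained: the paper invokes Schmidt's theorem (\cite[\S 1]{Schmidt}) to produce an infinite continued fraction of $z'\in K_M$ with coefficients $g_i \in A_M$, $\dg(g_i)\geq 1/M$, and then matches it with $\mathrm{App}(z)$; you instead run the paper's own algorithm (Definition \ref{def Alg of cf}) on $w$ and prove by induction, using only Lemma \ref{lemma first approx} and the fact that $K_M$ is a field, that all its coefficients lie in $A_M$. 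Your variant buys two things: it is self-contained (no external citation), and it avoids a small subtlety the paper glosses over, namely that Lemma \ref{lemma equal cf} compares the \emph{algorithmic} sequences $\mathrm{App}(z)$ and $\mathrm{App}(z')$, so to apply it to Schmidt's coefficients one must first know that Schmidt's expansion coincides with the algorithmic one (true, but unproved at that point in the paper, since uniqueness, Prop. \ref{prop cf are uniq}, comes later). A further minor plus: you explicitly derive the ``in particular'' convergence clause from Proposition \ref{prop approx cf}, which the paper's proof leaves implicit. Note also that where the paper needs $z'$'s expansion to be infinite, both arguments get this for free from Lemma \ref{lemma equal cf} (``$z'_m$ is defined''), so your omission of any separate discussion of the case $w\in k_M$ is not a gap.
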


\begin{proof}
Assume that $\sum_{i=1}^{\infty} \dg (f_i) =\nu<\infty$.
Since $\hat{K}$ is the completion of $\K$, there exists 
$z' \in \K$ such that $\nu(z-z') > 2\nu$.
It follows from Lemma \ref{lemma Puiseux field} that 
$z'\in K_M$, 
for some $M \in \mathbb{Z}_{>0}$.
Thus, it follows from \cite[\S 1]{Schmidt} that there exists 
a sequence 
$\lbrace  g_i \rbrace_{i=0}^\infty \subset A_M$, with 
$\dg(g_i)\geq 1/M$ for $i>0$, such that the sequence 
$\mathrm{y}_m(z'):=[g_0, g_1, \dots, g_m]_{\mathrm{ev}}$
converges to $z'$.
In particular, we have $\sum_{i=1}^{\infty} \dg (g_i) =\infty$.
Now, since for any $n \in \mathbb{Z}_{\geq 0}$ we have
$\nu(z-z')> 2 \nu >  2 \sum_{i=1}^n \dg(f_i)$,
we conclude that $f_i=g_i$, for all $i$.
Thus, we get $\sum_{i=1}^{\infty} \dg (f_i) =\infty$, 
contradicting the assumption.
\end{proof}

\begin{corollary}\label{coro deg with common den}
In the notation of Prop. \ref{Prop conv fc} assume that $\lbrace f_i \rbrace_{i=0}^{\infty}\subseteq A_M$.
Then $z$ belongs to $K_M$.
\end{corollary}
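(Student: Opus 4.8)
The plan is to exploit the completeness of $K_M$ together with the convergence of approximants already established in Prop. \ref{Prop conv fc}. First I would observe that, since every coefficient $f_0,\dots,f_n$ lies in $A_M=E[t^{1/M}]$, the value $\x_n(z)=[f_0,\dots,f_n]_{\mathrm{ev}}$ is produced from these coefficients by finitely many additions and inversions, all performed inside the fraction field $k_M=E(t^{1/M})$. Hence every approximant satisfies $\x_n(z)\in k_M\subseteq K_M$.

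Next I would recall that $K_M=E((t^{-1/M}))$ is, after the substitution $u=t^{1/M}$, the Laurent series field $E((u^{-1}))$, which is complete with respect to the restriction of $\nu$. A complete subspace of a valued field is closed in any ambient valued field inducing the same valuation, so $K_M$ is a closed subset of $\hat{K}$.

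Finally, Prop. \ref{Prop conv fc} guarantees that the sequence $\lbrace\x_n(z)\rbrace_{n=0}^{\infty}$ converges to $z$ in $\hat{K}$. Since this sequence lies entirely inside the closed set $K_M$, its limit $z$ must also belong to $K_M$, which is exactly the assertion.

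There is no genuine obstacle here: the statement is an immediate consequence of the facts that the approximants already live in $K_M$ and that $K_M$ is closed. The only point meriting an explicit line of justification is the closedness of $K_M$ in $\hat{K}$, i.e.\ its completeness as a Laurent series field; everything else is furnished directly by the preceding proposition.
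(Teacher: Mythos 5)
Your proposal is correct and follows essentially the same route as the paper: the paper likewise notes that all approximants $\x_n(z)$ lie in $k_M$ and then invokes the fact that the completion of $k_M$ under $\nu$ is $K_M$, which is exactly your closedness-of-$K_M$ argument spelled out slightly more explicitly. No gap; your extra sentence justifying that completeness implies closedness in $\hat{K}$ is a fair elaboration of what the paper leaves implicit.
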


\begin{proof}
Note that, since $f_i \in A_M=F[t^{1/M}]$, any 
$\x_n(z) \in k_M=F(t^{1/M})$.
The result follows since
the completion of $k_M$ with respect to $\nu$ is $K_M$.
\end{proof}
The converse of the preceding result is a direct adaptation of a result of \cite{Schmidt}.

\begin{lemma}\cite[\S 1]{Schmidt}\label{lemma paulin}
For each $z \in K_M\smallsetminus k_M$, 
there exists a unique sequence 
$\lbrace f_i \rbrace_{i=0}^{\infty} \subset A_M$, with 
$\dg(f_i) >0$, for all $i>0$, such that 
$z= [f_0, f_1, \dots, f_n, \dots]_{\mathrm{ev}}.$
Moreover, if $z\in k_M$ then There exists
$f_0,f_1,\dots,f_n$ with $\dg(f_i) >0$, for all $0<i\leq n$,
and $z=[f_0,\dots,f_n]_\mathrm{ev}$.
\end{lemma}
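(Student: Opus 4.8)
The plan is to reduce the statement entirely to the classical continued fraction theorem of Schmidt for the field of Laurent series over $E$, using the fact that $K_M=E((t^{-1/M}))$ is, as a valued field, nothing but $E((u^{-1}))$ after the substitution $u=t^{1/M}$. Since the whole content of the lemma is a transport of structure through this identification, there is no genuine obstacle beyond keeping careful track of how the valuation rescales; the only point requiring a moment's attention is checking that this rescaling leaves the field topology, and hence the notion of convergence of a continued fraction, unchanged.

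First I would introduce the uniformizer $u=t^{1/M}$ and the resulting isomorphism of fields $\varphi\colon E((u^{-1}))\to K_M$ determined by $u\mapsto t^{1/M}$. This $\varphi$ carries the polynomial ring $E[u]$ onto $A_M=E[t^{1/M}]$ and the rational function field $E(u)$ onto $k_M=E(t^{1/M})$. Writing $\nu_0$ for the standard valuation on $E((u^{-1}))$ normalized by $\nu_0(u^{-1})=1$, the defining relation $\nu(t^{-1/M})=1/M$ shows that $\nu\circ\varphi=\frac1M\nu_0$. Since rescaling a valuation by a fixed positive constant does not alter the induced topology, $\varphi$ is a homeomorphism of topological fields; in particular a continued fraction converges in $E((u^{-1}))$ if and only if its image under $\varphi$ converges in $K_M$, and both limits correspond under $\varphi$.

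Next I would invoke Schmidt's theorem \cite[\S 1]{Schmidt}: every element of $E((u^{-1}))$ admits a unique continued fraction $[g_0,g_1,\dots]$ with $g_i\in E[u]$ and $\deg_u(g_i)\geq1$ for $i>0$, converging to it, and this expansion terminates precisely for elements of $E(u)$. Applying $\varphi$ to the expansion of $\varphi^{-1}(z)$ and setting $f_i=\varphi(g_i)\in A_M$ transports this to $z=[f_0,f_1,\dots]_{\mathrm{ev}}$, with the approximants $\x_n$ lying in $k_M$. Because $\dg(f_i)=\deg_u(g_i)/M$, the condition $\deg_u(g_i)\geq1$ becomes $\dg(f_i)\geq1/M>0$ for $i>0$, which is exactly the required positivity, and uniqueness of the $f_i$ follows from the uniqueness in Schmidt's theorem together with the bijectivity of $\varphi$ on $A_M$.

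Finally, the dichotomy between the two cases is inherited verbatim: an element $z\in K_M\smallsetminus k_M$ corresponds under $\varphi^{-1}$ to an element of $E((u^{-1}))\smallsetminus E(u)$, whose Schmidt expansion is infinite, whereas $z\in k_M$ corresponds to an element of $E(u)$, whose expansion terminates after finitely many steps and yields the asserted finite continued fraction $z=[f_0,\dots,f_n]_{\mathrm{ev}}$. As noted above, the argument is soft throughout, the sole subtlety being the observation that the factor $\frac1M$ relating $\nu$ and $\nu_0$ is harmless for both convergence and for the sign of the degrees.
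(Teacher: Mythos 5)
Your proposal is correct and matches the paper's approach exactly: the paper offers no independent proof of this lemma, simply citing Schmidt's theorem on the grounds (stated in the introduction) that $K_M=E((t^{-1/M}))$ is trivially isomorphic to $E((u^{-1}))$ as a valued field. Your write-up merely makes explicit the transport of structure and the harmless $\frac1M$ rescaling of the valuation that the paper leaves implicit.
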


Next result shows that the coefficients of continued 
fractions approximating elements in $\hat{K}$ are unique.

\begin{proposition}\label{prop cf are uniq}
Assume $z=\w_{\mathrm{ev}}=\w'_{\mathrm{ev}}$, for two expression 
of the form $\w=[f_0, \dots]$ and $\w'=[g_0, \dots]$.
Then $l(\w)=l(\w')$ and $f_i=g_i$, for all 
$i\in\mathbb{Z}_{\geq 0}$.
\end{proposition}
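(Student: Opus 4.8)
The plan is to reduce everything to the uniqueness statements already proved, handling separately the cases where $z$ lies in $\tilde{k}$ and where it does not. First I would observe that the parity of the situation is governed by whether the expressions are finite or infinite, so the initial move is to rule out the mixed case: suppose $\w$ is infinite and $\w'$ is finite (or vice versa). If $\w'=[g_0,\dots,g_M]$ is finite with $\dg(g_i)>0$ for $i>0$, then $z=\w'_{\mathrm{ev}}\in\tilde{k}$ by the remark preceding Lemma \ref{mt2b}, since finite continued fractions with coefficients in $\A$ evaluate in $\mathrm{Quot}(\A)=\tilde{k}$. On the other hand, by Lemma \ref{p24}(iv), if $z\notin\tilde{k}$ then the algorithm $\mathrm{App}(z)$ never ends, producing the canonical infinite expansion; and by Lemma \ref{p24}(iii) the algorithm ends exactly when $z\in\tilde{k}$. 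So the length of the canonical expansion produced by $\mathrm{App}(z)$ is already determined by whether $z\in\tilde{k}$.

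The crux is therefore to show that \emph{any} expression $\w=[f_0,f_1,\dots]$ with $z=\w_{\mathrm{ev}}$ and $\dg(f_i)>0$ for $i>0$ must coincide termwise with the canonical one $\w_z$ produced by $\mathrm{App}(z)$. Here I would argue inductively using the defining recursion, exactly mirroring the structure of $\mathrm{App}$. Since $z=f_0+\tfrac{1}{z'}$ where $z'=[f_1,f_2,\dots]_{\mathrm{ev}}$, Lemma \ref{lemma valuation of cf} gives $\nu(z')=\dg(f_1)>0$, so $\nu(1/z')<0$ forces $\nu(z-f_0)=\nu(1/z')>0$ when $f_0\neq0$, and more precisely $f_0$ is the unique element of $\A$ with $\nu(z-f_0)>0$ by Lemma \ref{lemma first approx}. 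Thus $f_0$ agrees with the first coefficient of $\mathrm{App}(z)$. The same lemma identifies $z-f_0=1/z'$ with the quantity $z_0-f_0$ in Step 3 of Definition \ref{def Alg of cf}, hence $z_1=1/(z-f_0)=z'=[f_1,f_2,\dots]_{\mathrm{ev}}$. One then repeats the argument with $z'$ and the shifted expression $[f_1,f_2,\dots]$, giving $f_1=g_1$ and so on; formally this is an induction on the index $i$.

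For the finite case the same recursion applies, with the extra point that when $z_n-f_n=0$ the algorithm terminates, which happens precisely when the tail $[f_{n+1},\dots]$ is empty; Lemma \ref{lemma valuation of cf} guarantees that a genuine positive-degree tail contributes $\nu(z_n-f_n)=\nu(1/z_{n+1})<0$, never zero, so termination is forced exactly at the correct index. I expect the main obstacle to be bookkeeping the two expressions $\w$ and $\w'$ simultaneously rather than comparing each to the canonical $\w_z$: the cleanest route is to prove the stronger statement that every admissible expression equals $\w_z$, from which $\w=\w_z=\w'$ and the equality of lengths follow immediately. The one subtlety to watch is the degenerate behaviour at $f_0$ (which may have degree zero or be zero), but Lemma \ref{lemma first approx} and Lemma \ref{p24}(i) dispose of this by pinning down $f_0$ uniquely in all cases.
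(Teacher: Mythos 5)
Your proposal is correct and takes essentially the same approach as the paper's proof: the paper likewise peels off coefficients inductively (invoking the argument of Lemma \ref{lemma equal cf}, which rests on the uniqueness in Lemma \ref{lemma first approx}), and it rules out a length mismatch exactly as you do, by observing via Lemma \ref{lemma valuation of cf} that a nonempty tail $[0,g_{n+1},\dots]_{\mathrm{ev}}$ has finite valuation $\dg(g_{n+1})$ and so cannot equal $0$. One small caveat: several of your intermediate valuations have flipped signs --- Lemma \ref{lemma valuation of cf} gives $\nu(z-f_0)=\nu\big([0,f_1,\dots]_{\mathrm{ev}}\big)=\dg(f_1)>0$, hence $\nu(z')=-\dg(f_1)<0$ for the tail $z'=[f_1,f_2,\dots]_{\mathrm{ev}}$, and likewise $\nu(z_n-f_n)=\dg(f_{n+1})$ is positive and finite rather than negative --- but the conclusions you actually use ($f_0$ is pinned down by Lemma \ref{lemma first approx}, and $z_n-f_n\neq 0$ whenever the tail is nonempty) are the correct ones, so the argument stands.
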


\begin{proof}
Without loss of generality we assume that $l(\w)\leq l(\w')$.
Firstly, assume that $l(\w)<\infty$.
By the same argument as used in Lemma \ref{lemma equal cf} we can see that $f_i=g_i$, for all $i \leq l(\w)$.
In particular, if $n=l(\w)<l(\w')$, then 
$0=[0,g_{n+1}, \dots]_{\mathrm{ev}}$. However, 
Lemma \ref{lemma valuation of cf} shows that the valuation of
the right hand side of this identity is $-\nu(g_{n+1})$, which is
a contradiction.
Thus, we conclude that $l(\w')=n$ and $f_i=g_i$, for all $i \leq n< \infty$.
When $l(\w)=\infty$, Lemma \ref{lemma equal cf} directly 
implies that $f_i=g_i$, for all $i\geq 0$, 
which concludes the proof.
\end{proof}

\subparagraph{Proof of Theorem \ref{main teo 1}}
The first statement together with the divergence 
of the series $\sum_{i=1}^\infty\dg(f_i)$ follows
from Prop. \ref{Prop conv fc}. The second statement is Cor. \ref{coro deg with common den} together with Prop. \ref{prop cf are uniq}.
Finally, if $x_n(z)=p_n/q_n$, then Prop. 
\ref{prop approx cf} give us 
$\nu\left(z-\frac{p_n}{q_n}\right)=\dg(f_{n+1})
+2\sum_{i=1}^n\deg(f_i)$. Recall that Lemma 
\ref{lemma cf in terms of rho} tells us that
$\rho_n\big(\x_n(z)\big)=\infty$ and $\rho_n(z)=z_{n+1}$. If we set 
$\rho_n(u)=\frac{r_nu+s_n}{q'_nu-p'_n}$, then
the former identity tells us
that, for some constant $\lambda$, we have
$q_n=\lambda q'_n$ and $p_n=\lambda p'_n$.
From the definition of $\rho_n$, and the properties of 
Moebius transformations, we can assume that
$$\sbmattrix{r_n}{s_n}{q_n'}{-p_n'}=
\sbmattrix011{-f_n}\cdots\sbmattrix011{-f_0}\in
\mathrm{SL}_2(\overline{A}),$$
so in particular $r_np'_n+s_nq'_n=(-1)^n$, and therefore
$r_np_n+s_nq_n=(-1)^n\lambda$. This implies that 
$\lambda\in\overline{A}$, so it is a common divisor
of $p_n$ and $q_n$, and therefore a unit.
Furthermore, $\rho_n'(u)=
\frac{(-1)^n}{(q'_nu-p'_n)^2}=
\frac{(-1)^n\lambda^2}{(q_nu-p_n)^2}$.
Now Lemma
\ref{lemma log derivate and degrees} gives us
$$\nu\left(z-\frac{p_n}{q_n}\right)=
\dg(f_{n+1})+ 2 \sum_{i=1}^{n} \dg(f_i)=
\nu \left(\frac{\rho_n(z)}{\rho_n'(z)}\right)$$
$$=\nu(z_{n+1})+2\nu(q_nz-p_n)=\nu(f_{n+1})+2\nu(q_n)+
2\nu\left(z-\frac{p_n}{q_n}\right).$$
It follows that $\nu\left(z-\frac{p_n}{q_n}\right)=
-\nu(f_{n+1})-2\nu(q_n)=\dg(f_{n+1})+2\dg(q_n)$.
\qed

\medskip

\subparagraph{Proof of Theorem \ref{main teo 2}}
Consider an infinite expression $\w=[f_0,f_1,\dots]$, 
and the corresponding
sequence of finite expressions $\w(n)=[f_0,\dots,f_n]$.
Then applying Prop. \ref{prop approx cf} to the element
$z_m=\w(m)_{\mathrm{ev}}$, for $m>n$, and noting that
$\x_n(z_m)=z_n$, we obtain 
$\nu(z_m-z_n)=\dg(f_{n+1})+2\sum_{i=1}^n\dg(f_i)$.
In particular, if the sum on the right diverges, 
we conclude that
$\{z_n\}_{n=1}^\infty$ is a Cauchy sequence, 
and hence it converges to an 
element $z\in\hat K$. It follows that 
$\w$ is indeed the continued 
fraction associated to $z$. Uniqueness 
follows from Prop. 
\ref{prop cf are uniq}.
The fact that $z\notin\tilde k$ follows 
from the fact that $z\in \tilde{k}$
implies that $z$ is a rational function 
on some element $t^{1/n}$,
and therefore it must have a finite 
expressions as a continued fraction
by Lemma \ref{lemma paulin}.
\qed

\section{On some Diophantine properties}\label{sec diphantine properties}

\begin{definition}
For every element $a\in\hat{K}$ and for every integer $r$,
we denote by $B_a^{(r)}$ the open ball defined by
$B_a^{(r)}=\{b\in\hat{K}|\nu(a-b)>r\}$. Similarly,
we denote by $B_a^{[r]}$ the closed ball defined by
$B_a^{[r]}=\{b\in\hat{K}|\nu(a-b)\geq r\}$.
\end{definition}

\begin{definition}
Consider a expression $\w=[f_0,f_1,\dots,f_n]$. We denote by
$\Omega_\w$ the set of all expressions starting with $\w$,
i.e., expressions of the form 
$\w'=[f_0,\dots,f_n,g_{n+1},\dots]$,
where $g_{n+1},g_{n+2},\dots$ are arbitrary,
and set $B_{\w}=\{\w'_{\mathrm{ev}}|\w'\in\Omega_{\w}\}$.
\end{definition}

\begin{lemma}\label{l42}
    The set $B_\w$ defined above is an open ball. In fact $B_\w=B_a^{(r)}$,
    where $a=\w_{\mathrm{ev}}$ and $r=2\sum_{i=1}^n\dg(f_i)$.
    Furthermore, $B_\w=\eta^{-1}_n\left(B_0^{(0)}\right)$, where
    $\eta_n(z)=1/\rho_n(z)$, according to Definition \ref{def rho}.
\end{lemma}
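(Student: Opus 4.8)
The plan is to establish the chain $B_\w = B_a^{(r)} = \eta_n^{-1}\!\left(B_0^{(0)}\right)$ in two stages: first I would pin down $B_\w$ as the concrete open ball $B_a^{(r)}$ by a double inclusion (this simultaneously proves that $B_\w$ \emph{is} an open ball and identifies its radius), and only then identify this ball with the Moebius preimage $\eta_n^{-1}\!\left(B_0^{(0)}\right)$. Throughout I write $r=2\sum_{i=1}^n\dg(f_i)\geq0$ and recall that $B_0^{(0)}=\{b\in\hat K:\nu(b)>0\}$.

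For the inclusion $B_\w\subseteq B_a^{(r)}$ I would take an arbitrary $z=\w'_{\mathrm{ev}}$ with $\w'=[f_0,\dots,f_n,g_{n+1},\dots]\in\Omega_\w$. By uniqueness of expansions (Prop.~\ref{prop cf are uniq}) this expression is the continued fraction produced by the algorithm on $z$, so $\x_n(z)=\x_n(\w)=a$. If the expansion stops at index $n$ then $z=a\in B_a^{(r)}$ trivially; otherwise $z_{n+1}$ is defined and Prop.~\ref{prop approx cf} gives
$$\nu(z-a)=\nu\big(z-\x_n(z)\big)=\dg(g_{n+1})+2\sum_{i=1}^n\dg(f_i)=\dg(g_{n+1})+r>r,$$
since $\dg(g_{n+1})>0$. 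Hence $z\in B_a^{(r)}$. For the reverse inclusion $B_a^{(r)}\subseteq B_\w$ the key tool is Lemma~\ref{lemma equal cf}, applied with the roles arranged so that the reference element is $a$ (whose expansion is exactly $[f_0,\dots,f_n]$, so its index-$n$ term is defined) and the second element is an arbitrary $z$ with $\nu(z-a)>r$. The lemma then forces the first $n+1$ coefficients of the expansion of $z$ to coincide with $f_0,\dots,f_n$, i.e.\ $z\in B_\w$. Combining the two inclusions yields $B_\w=B_a^{(r)}$, an open ball.

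Finally, for $B_\w=\eta_n^{-1}\!\left(B_0^{(0)}\right)$ I would argue through the tail of the expansion. Since $\eta_n=1/\rho_n$, we have $\eta_n(z)\in B_0^{(0)}\iff\nu(\rho_n(z))<0$. For $z\in B_\w$, Lemma~\ref{lemma cf in terms of rho}(2) identifies $\rho_n(z)$ with the tail $z_{n+1}$, which satisfies $\nu(z_{n+1})=-\dg(g_{n+1})<0$ by Step~3 of the algorithm together with Lemma~\ref{p24}(ii); hence $z\in\eta_n^{-1}\!\left(B_0^{(0)}\right)$. Conversely, if $\nu(\rho_n(z))<0$, write $u=\rho_n(z)$ and expand $u=[g_{n+1},g_{n+2},\dots]_{\mathrm{ev}}$ (its leading coefficient has degree $-\nu(u)>0$); then $z=\rho_n^{-1}(u)$, and unwinding $\rho_n^{-1}=\sigma_0^{-1}\circ\cdots\circ\sigma_n^{-1}$ with $\sigma_i^{-1}(y)=f_i+1/y$ shows $z=[f_0,\dots,f_n,g_{n+1},\dots]_{\mathrm{ev}}\in B_\w$. (As a cross-check one could instead compute $\eta_n$ explicitly from the matrix form of $\rho_n$ recorded in the proof of Theorem~\ref{main teo 1} and read off the preimage directly, but the tail argument is cleaner.)

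The main obstacle I anticipate is the reverse direction in each stage: showing that mere proximity to $a$, respectively negativity of $\nu(\rho_n(z))$, is strong enough to force the expansion of $z$ to begin with $f_0,\dots,f_n$. This is exactly where the stability of expansions under small perturbation (Lemma~\ref{lemma equal cf}) and the surjectivity of the tail map $\rho_n$ onto $\{u:\nu(u)<0\}$ do the real work. Care is needed to verify the hypotheses of Lemma~\ref{lemma equal cf} with $a$ (whose continued fraction \emph{terminates} at index $n$) playing the role of the reference element, and to confirm that $r=2\sum_{i=1}^n\dg(f_i)$ is the sharp threshold, which the exact equality $\nu(z-a)=\dg(g_{n+1})+r$ from Prop.~\ref{prop approx cf} guarantees.
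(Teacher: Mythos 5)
Your proposal is correct, and its first stage coincides with part of the paper's argument, but the overall organization and the proof of the Moebius characterization are genuinely different. The double inclusion you give — Prop.~\ref{prop approx cf} together with uniqueness of expansions for $B_\w\subseteq B_a^{(r)}$, and Lemma~\ref{lemma equal cf} with $a$ as the reference element for $B_a^{(r)}\subseteq B_\w$ — is exactly how the paper computes the radius. The difference is in the logical order and in the third claim. The paper first proves $B_\w=\eta_n^{-1}\bigl(B_0^{(0)}\bigr)$ by induction on $n$, where each step only analyzes a one-letter extension (an element $z'\in B_{\w'}$ lies in $B_\w$ if and only if $\nu\bigl(\rho_{n-1}(z')-f_n\bigr)>0$); it then deduces that $B_\w$ is a ball from the general fact that Moebius transformations send balls to balls or complements of balls in $\mathbb{P}^1(\hat{K})$ and $\infty\notin B_\w$, and only afterwards identifies the radius. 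You instead get ball-ness for free from the identification $B_\w=B_a^{(r)}$, and you prove the Moebius statement non-inductively by reading $\rho_n$ as the shift map on expansions: forward via $\rho_n(z)=z_{n+1}$ (Lemma~\ref{lemma cf in terms of rho}), and conversely by expanding $u=\rho_n(z)$ and pulling that expansion back through $\rho_n^{-1}$. Your route is shorter and makes transparent why the preimage description holds (the tail map is onto $\{u:\nu(u)<0\}$); its cost is that the pull-back step splices an infinite tail onto a finite head, which tacitly uses continuity of Moebius transformations on $\mathbb{P}^1(\hat{K})$ so that $\rho_n^{-1}$ of the convergent approximants of $u$ converges to $z$ — the paper's induction never needs this, because the inductive hypothesis does the splicing one coefficient at a time. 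Two small patches you should make: the element $z=a=\w_{\mathrm{ev}}$ itself lies in $B_\w$ but has no $z_{n+1}$ and satisfies $\rho_n(a)=\infty$, so your forward argument in the second stage needs the separate observation that $\eta_n(a)=0\in B_0^{(0)}$; symmetrically, in the converse direction the condition $\nu\bigl(\rho_n(z)\bigr)<0$ should be interpreted in $\mathbb{P}^1(\hat{K})$ so as to include $\rho_n(z)=\infty$, which again is the case $z=a$.
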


\begin{proof}
    If $n=0$, then $B_\w$ is the set of elements 
    $a$ satisfying $\nu(f_0-a)>0$. 
    The result follows in that case. If $n>1$, we can 
    assume, as an inductive hypotheses, that 
    $B_{\w'}=\eta_{n-1}^{-1}\left(B_0^{(0)}\right)$, 
    where $\w'=[f_0,\dots,f_{n-1}]$. Rewrite this as
    $B_0^{(0)}=\eta_{n-1}\left(B_{\w'}\right)$,
    so $\rho_{n-1}\left(B_{\w'}\right)$ is 
    the complement of the
    closed ball $B_0^{[0]}$. Now an element
    $z'\in B_{\w'}$ is in $B_\w$ if and only if
    $\nu\left(\rho_{n-1}(z')-f_n\right)>0$.
    Equivalently, we have
    $\rho_{n-1}(B_\w)=B_{f_n}^{(0)}$.
    In other words $\eta_n(B_\w)=\rho_{n-1}(B_\w)-f_n=B_0^{(0)}$.
    The first statement now follows since Moebius transformations
    map balls to either balls or complements 
    (in $\mathbb{P}^1(\hat{K})$) of balls,
    and $B_\w$ does not contain $\infty$. It is also clear that
    $a\in B_\w$, so all that remains is to compute the radius.
    Lemma \ref{lemma equal cf} proves that $B_a^{(r)}$ is contained
    in $B_{\w}$. For the converse, every element in $B_{\w}$
    has $a$ as an approximant, so the result follows from
    Equation (\ref{eq approx cf}).
\end{proof}

Recall that, according to Theorem \ref{main teo 1}, 
the rational $r$ above can be written as
$r=2\dg(q_n)$, where $\frac{p_n}{q_n}=\w_{\mathrm{ev}}=x_n(z)$
is the $n$-th approximant for every $z\in B_{\w}$. Since every
element of a ball can be regarded as the center, we can 
also write $B_{\w}=B_z^{[2\dg(q_n)]}$, for every
$z\in B_{\w}$.

\begin{proposition}\label{mt2b}
When $z\in\tilde{k}$, the associated continued fraction always ends. 
\end{proposition}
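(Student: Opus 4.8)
The plan is to argue by contradiction, reducing everything to the classical polynomial case via Schmidt's theorem together with the uniqueness of continued fraction expansions already at our disposal. Suppose $z\in\tilde{k}$ but its associated continued fraction does not end. Then by Lemma \ref{p24}(iv) the algorithm of Definition \ref{def Alg of cf} produces an infinite sequence $\{(f_i,z_i)\}_{i=0}^{\infty}$, and Proposition \ref{Prop conv fc} guarantees that the approximants actually converge to $z$, so that $z=[f_0,f_1,\dots]_{\mathrm{ev}}$ is a genuine infinite expansion.

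Next I would exploit that $z\in\tilde{k}$ means $z\in k_M=E(t^{1/M})$ for some $M$. Setting $u=t^{1/M}$ identifies $k_M$ with the rational function field $E(u)$ sitting inside $K_M=E((u^{-1}))$, which is precisely the setting of Schmidt's theorem. Hence Lemma \ref{lemma paulin} provides a finite expansion $z=[g_0,\dots,g_n]_{\mathrm{ev}}$ with $g_i\in A_M$ and $\dg(g_i)>0$ for $0<i\leq n$. The two expressions $[f_0,f_1,\dots]$ and $[g_0,\dots,g_n]$ both evaluate to $z$, so Proposition \ref{prop cf are uniq} forces them to have the same length; this contradicts the fact that one is infinite and the other finite. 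Therefore the continued fraction of $z$ must end.

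The only point requiring care is the legitimacy of comparing the algorithmically produced expansion with Schmidt's finite one. Both must be recognized as expansions of the same element $z\in\hat{K}$ in the sense of Proposition \ref{prop cf are uniq}, which is what makes the length comparison valid. I expect this to be the main (and only minor) obstacle, and it is resolved by observing that Proposition \ref{prop cf are uniq} applies to \emph{any} two expressions evaluating to a common element of $\hat{K}$, with no hypothesis on how they are produced.

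A self-contained descent argument is also available and clarifies the underlying mechanism. One first checks, by induction using Lemma \ref{lemma first approx}, that every $z_n$ stays in $k_M$ and every $f_n$ lies in $A_M$, since the polynomial part of an element of $k_M\subseteq K_M$ again lies in $A_M$. Writing $z_n=a_n/b_n$ in lowest terms in the principal ideal domain $A_M=E[t^{1/M}]$, Step 3 of Definition \ref{def Alg of cf} gives $z_{n+1}=b_n/c_n$ with $\dg(c_n)<\dg(b_n)$, so the degree of the reduced denominator, which divides $c_n$, strictly decreases at each step. Since these degrees lie in the well-ordered set $\tfrac1M\mathbb{Z}_{\geq0}$, the process cannot continue indefinitely, and the continued fraction ends.
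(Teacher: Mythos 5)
Your proof is correct, and both of your arguments take genuinely different routes from the paper's. The paper's own proof is a Liouville-type degree count: writing $z=p/q$, if the fraction failed to end then $\nu\left(z-\frac{p_n}{q_n}\right)>2\dg(q_n)$ with $\dg(q_n)$ arbitrarily large (divergence coming from Prop.~\ref{Prop conv fc}), and the identity $\dg(qp_n-pq_n)=\dg(q)+\dg(q_n)-\nu\left(z-\frac{p_n}{q_n}\right)<\dg(q)-\dg(q_n)<0$ for $\dg(q_n)>\dg(q)$ forces $qp_n=pq_n$, i.e.\ $z=\x_n(z)$, a contradiction; this deliberately reuses the approximation formula of Theorem~\ref{main teo 1} and is essentially the same computation that proves Theorem~\ref{main teo 4}. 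Your main route --- convergence (Prop.~\ref{Prop conv fc}), Schmidt's finite expansion (Lemma~\ref{lemma paulin}), and uniqueness (Prop.~\ref{prop cf are uniq}) --- is shorter but outsources the work to the uniqueness proposition, whose proof is exactly engineered to compare a finite with an infinite expression (via Lemma~\ref{lemma valuation of cf}); since all three results precede Prop.~\ref{mt2b} in \S\ref{sec convergence}, there is no circularity, and your worry about applying Prop.~\ref{prop cf are uniq} to expansions of different origin is indeed harmless, as that proposition only requires two expressions evaluating to the same element. Your descent argument is the most elementary and self-contained of the three: it needs only Lemma~\ref{lemma first approx} (together with the observation, implicit in its proof, that $z_n\in k_M$ forces $f_n\in A_M$) and the Euclidean algorithm in the PID $A_M$; notably it is the only argument among the three that avoids Prop.~\ref{Prop conv fc} entirely, since termination follows from the well-ordering of denominator degrees in $\tfrac1M\mathbb{Z}_{\geq0}$ rather than from divergence of $\sum_i\dg(f_i)$. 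What each buys: the paper's proof showcases its Diophantine toolkit, your first proof minimizes new computation, and your second exhibits the statement as the classical termination of the Euclidean algorithm transported to $A_M$.
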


\begin{proof}
Write  $z=\frac pq$. Whenever the associated continued fraction 
fails to end, we have $\nu\left(\frac pq-\frac{p_n}
{q_n}\right)>2\dg(q_n)$
for arbitrarily large values of $\dg(q_n)$. However,
if $\dg(q_n)>\dg(q)$ we conclude
$$\dg(qp_n-pq_n)=
\dg(q)+\dg(q_n)-\nu\left(\frac pq-\frac{p_n}{q_n}\right)
<\dg(q)-\dg(q_n)<0,$$ a contradiction
unless $qp_n-pq_n=0$, and therefore $\frac pq=\frac{p_n}{q_n}$.
\end{proof}

\begin{proof}[Proof of Theorem \ref{main teo 4}]
Write  $a=\frac pq=\w_{\mathrm{ev}}$, where
$\w=[f_0,\dots,f_n]$, as we can always do by the preceding proposition. Then the hypotheses
 $$\nu\left(z-\frac{p}{q}\right)>-2\nu(q)=2\dg(q)=
 \sum_{i=1}^n\dg(f_i)=r,$$
 is equivalent to $z\in B_a^{(r)}=B_{\w}$.
Since the latter is, by definition, the set
of elements whose associated continued fraction start as
$[f_0,f_1,\dots,f_n,\dots]$, the element $a=\frac{p}{q}$ 
is an approximant of $z$, which concludes the proof.
\end{proof}

\begin{xrem}\label{main teo 5}
An element $f \in \hat{K}$ is called algebraic of degree $n$ over 
$\Tilde{k}$ when it is a zero of a polynomial in $\Tilde{k}[T]$ of
degree $n$.
When $n=2$, we say that $f$ is quadratic over $\Tilde{k}$.
One can give a straightforward generalization of the usual
characterization of quadratic element in terms of their continued fractions:
\begin{quote}
An element $z \in \hat{K}$ is quadratic over $\Tilde{k}$ if and only 
if its associated continued fraction is eventually periodic.
\end{quote}
In fact, assuming that $z \in \hat{K}$ is quadratic over $\Tilde{k}$,
then $z \in \K$, since $\Tilde{k}\subset \K$
and $\K$ is algebraically closed.
Then, it follows from Lemma \ref{lemma Puiseux field} that $z \in K_N$, for some $N \in \mathbb{Z}_{>0}$.
Hence, it follows from \cite{Schmidt} or \cite[\S 2]{Paulin} that $z$ can be written as a periodic continued fraction with coefficients in $A_N$. This is the unique continued fraction of $z$ according to Prop. \ref{prop cf are uniq}.
On the other hand, if $z \in \hat{K}$ has a periodic continued fraction, then $z \in K_N$, for some $N \in \mathbb{Z}_{>0}$, according to Corollary \ref{coro deg with common den}.
Then, \cite{Schmidt} implies that $z$ is quadratic over $k_N=F(t^{1/N}) \subset \Tilde{k}$.
\end{xrem}

\section{On the modular ray in a Berkovich space}\label{sec geom int of cf}

A geometrical interpretation for continued fractions
over the completion at infinity of a polynomial ring
was given in \cite{Paulin}. The purpose of this
section is to give an analogous construction for our context.
More specifically, here we give a geometrical interpretation
of Theo. \ref{main teo 1} in terms of the action of 
$\mathrm{SL}_2(\A)$ on the Berkovich projective line 
over $\hat{K}$.
This approach allows us to prove Theo. 
\ref{main teo 3} in next section.

Let $\mathbb{P}^{1,\mathrm{Berk}}$ be 
the Berkovich projective line over $\hat{K}$ as defined 
in \cite[Ch. II]{BakerBerkovich}.
Specifically, we focus on the subset of points of type II 
or III, which is classically denoted $\h$ in literature.
The space $\h$ can be constructed as the quotient 
$$(\hat{K}\times\mathbb{R}) / \sim, \text{ where } 
(a,r) \sim (a',r) \Leftrightarrow \nu (a-a') \geq r.$$
The class of $(a,r)$ is denoted $\eta_{a,r}$.
If $\eta=\e$, the valuation $\nu(\eta):=r$ is well
defined, while the set $B_\eta=
\{b\in \hat{K}|\eta=\eta_{b,r}\}$ is the closed ball
$B^{[r]}_a=\{b\in\hat{K}|\nu(b-a)\geq r\}$. 
Note that every closed
ball in $\hat{K}$ has this form.
Following \cite[Ex. 1.4.3]{Berkovich}, the point 
$\e$ is called a point
of type II if $r\in\nu(\hat{K})=\mathbb{Q}$, and of type III otherwise.

We endow $\hat{K}\times\mathbb{R}$ with the product topology,
and $\h$ with the quotient topology. Note that a sequence  
$\left \lbrace  \eta_{a_n,r_n}  \right \rbrace_{n=1}^{\infty} \subset \h$ 
converges to $ \e $ if and only if $\lim_{n \to \infty} r_n = r$ 
and $\liminf_{n \to \infty}\nu(a_n - a) \geq r$.
This topology is metrizable.
In fact, if we write $\eta=\e$ and $\eta'=\eta_{a',r'}$,
for an arbitrary pair $(\eta,\eta')\in \h \times \h$, and define
\begin{equation}\label{metriceq}
    d(\eta,\eta')=\begin{cases}
\lvert r-r' \rvert, & \text{ if } \nu(a-a') \geq  \mathrm{min}(r,r')\\
 r + r'- 2\nu(a-a') , & \text{ if } \nu(a-a') <  \mathrm{min}(r,r'),
\end{cases}
\end{equation}
then $d$ is a metric and it defines the preceding topology 
on $\h$.The map $d$ can be geometrically interpreted 
as follows: Given two points $\eta=\e$ and
$\eta'=\eta_{a',r'}$ in $\h$, we define 
$\eta \vee \eta'$ as $\eta_{a, r''}=\eta_{a',r''}$, 
where $r''=\min \lbrace r,r', \nu(a-a') \rbrace$. 
The (unique) geodesical segment joining $\eta$ with 
$\eta'$ is then 
$$[\eta, \eta']= \lbrace \eta_{a,s}, \eta_{a',s'}
\lvert r'' \leq s \leq  r,\, r''\leq s'\leq r'\rbrace.$$
Thus $d(\eta,\eta')$ is the length of an interval 
$[0, d(\eta,\eta')]\subseteq\mathbb{R}$ that is 
isometric to the segment $[\eta,\eta']\subset \h$. 

The metric space $\left(\h,d \right)$ is an 
$\mathbb{R}$-tree according to \cite[2.2]{BakerBerkovich}, 
i.e., for any pair of points $\eta,\eta' \in \h$ there is 
a unique segment from $\eta$ to $\eta'$, namely 
the segment $[\eta, \eta']$ defined above, and this segment 
is geodesic.
The nontrivial points at the completion
$\mathbb{H}^{\mathrm{Berk}}$ of $\left(\h,d \right)$, 
which are called type IV points,
can be characterized as limits of sequences
$\{\eta_i\}_{i=1}^\infty \subset \h$ 
whose corresponding balls
$B_{\eta_i}=B_{a_i}^{[r_i]}$ form a decreasing 
sequence $B_{\eta_1}\supseteq B_{\eta_2}\supseteq\cdots$ 
of closed balls with empty intersection 
(cf. \cite[\S 1.2]{BakerBerkovich} or 
\cite[Ex. 1.4.3]{Berkovich}).
Here two such sequences define the same type IV point exactly 
when they are cofinal.
Note that the completeness 
of $\hat{K}$ forces the corresponding sequence of 
rational numbers $\{r_i\}_i$ to converge to a finite limit.

A ray $\mathscr{R}$ in $\mathbb{H}^{\mathrm{Berk}}$ is the image $\mathscr{R}=\mathrm{Im}(p)$ of an isometry $p: [0,\infty) \to \mathbb{H}^{\mathrm{Berk}}$.
We say that two rays in $\mathbb{H}^{\mathrm{Berk}}$ are equivalent exactly when their intersection is a ray.
The equivalent class of a ray is called its visual limit, and
it is denoted $p(\infty)$ by an abuse of notation.
The Berkovich projective line $\mathbb{P}^{1,\mathrm{Berk}}$ 
is the compactification of $\mathbb{H}^{\mathrm{Berk}}$ defined by adding the visual limit of all its rays (cf. \cite[\S 3.5]{IntroBerk}).
The points of the visual limit are as follows:
One point $a^{\star}=\eta_{a,\infty}$ for each 
element $a\in\hat{K}$, and a common element 
$\infty^{\star}=\eta_{a,-\infty}$
for any $a$. These points are in correspondence with $\mathbb{P}^1(\hat{K})$,
and are called type I points.

The group $G:=\mathrm{GL}_2(\hat{K})$ acts via isometric maps on $\h$ according to \cite[2.13 \& 2.15]{BakerBerkovich}. 
We write $g * \e$ for the image of $\e \in \h$ via the action of $g \in G$.
Moreover, if we write 
\begin{equation}
\mathrm{i}=\sbmattrix{0}{1}{1}{0} \quad \text{ and } \quad \mathrm{m}_{d_1,d_2,f}=\sbmattrix{d_1}{f}{0}{d_2},  \quad d_1,d_2 \in \hat{K}^{*}, f \in \hat{K},
\end{equation}
then the preceding action can be described via:
\begin{equation}
\mathrm{i} * \e=\begin{cases}
\eta_{\frac{1}{a} , r-2\nu(a)} , & \text{ if } \nu(a) < r,\\
\eta_{ 0 , -r }, & \text{ if } \nu(a) \geq r,
\end{cases}
\end{equation}
and
\begin{equation}
\mathrm{m}_{d_1,d_2,f} * \e = \eta_{a',r'}, \text{ with } a'=\frac{d_1 a+f}{d_2} \text{ and } r'= r+\nu\left(\frac{d_1}{d_2}\right).
\end{equation}
In the sequel, we write $\mathrm{t}_f:=\mathrm{m}_{1,1,f}$.
The group $\mathrm{SL}_2(\A)$ acts on $\h$ as a subgroup 
of $G$.

\begin{lemma}\label{lemma closing umbrellas}
Let $a\in\Hat{K}$. Then, there exists $f\in \A$ 
such that $\mathrm{t}_{-f}*\e=
\eta_{ 0, r}$ for every $r\leq0$. Furthermore,
$f\neq0$ precisely when $\nu(a)\leq 0$, and in this case
$\nu(f) = \nu(a)$.
\end{lemma}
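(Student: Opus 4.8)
The plan is to reduce the stated identity on Berkovich points to a single inequality on $\nu(a-f)$, and then read off both the existence of $f$ and the ``Furthermore'' claims directly from Lemma \ref{lemma first approx}.

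First I would compute the action explicitly. Since $\mathrm{t}_{-f}=\mathrm{m}_{1,1,-f}$, the formula for the action of $\mathrm{m}_{d_1,d_2,f}$ gives
\[
\mathrm{t}_{-f}*\eta_{a,r}=\eta_{a-f,\,r},
\]
because $a'=(1\cdot a-f)/1=a-f$ and $r'=r+\nu(1/1)=r$. By the defining equivalence $(a-f,r)\sim(0,r)\Leftrightarrow\nu(a-f)\geq r$, the desired identity $\mathrm{t}_{-f}*\eta_{a,r}=\eta_{0,r}$ is therefore equivalent to $\nu(a-f)\geq r$.

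Next I would observe that the quantifier ``for every $r\leq0$'' collapses to the single condition $\nu(a-f)\geq0$: if $\nu(a-f)\geq0$ then $\nu(a-f)\geq0\geq r$ for every $r\leq0$, and conversely evaluating at $r=0$ already forces $\nu(a-f)\geq0$. Hence the first assertion amounts to exhibiting some $f\in\A$ with $\nu(a-f)\geq0$. When $a\neq0$, Lemma \ref{lemma first approx} supplies a (unique) $f\in\A$ with $\nu(a-f)>0$, which a fortiori satisfies $\nu(a-f)\geq0$; when $a=0$ one takes $f=0$, and $\mathrm{t}_0$ fixes $\eta_{0,r}$. With this choice of $f$, the ``Furthermore'' part is immediate from Lemma \ref{lemma first approx}: that lemma gives $f=0$ exactly when $\nu(a)>0$, so $f\neq0$ precisely when $\nu(a)\leq0$; and when $f\neq0$ it gives $\dg(f)=-\nu(a)$, i.e.\ $\nu(f)=\nu(a)$, as required.

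I do not expect a genuine obstacle here, since the argument is essentially bookkeeping built on the explicit action formula together with Lemma \ref{lemma first approx}. The only points needing care are the collapse of the real-parameter family $\{r\leq0\}$ to the single inequality $\nu(a-f)\geq0$, and the edge case $a=0$, which lies outside the hypothesis $z\in\hat{K}^{*}$ of Lemma \ref{lemma first approx} and must be handled by setting $f=0$ by hand.
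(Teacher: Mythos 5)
Your proposal is correct and takes essentially the same route as the paper: compute $\mathrm{t}_{-f}*\eta_{a,r}=\eta_{a-f,r}$, obtain $f$ from Lemma \ref{lemma first approx} so that $\nu(a-f)>0\geq r$ forces $\eta_{a-f,r}=\eta_{0,r}$, and read the ``Furthermore'' claims off that same lemma. The only differences are cosmetic: you additionally record the converse equivalence (that the identity for all $r\leq 0$ amounts to $\nu(a-f)\geq 0$) and handle the edge case $a=0$ explicitly, which the paper passes over silently.
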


\begin{proof}
It follows from Lemma \ref{lemma first approx} that there exists 
$f\in \A$ satisfying the inequality $\nu(a-f)>0$.
Then $\mathrm{t}_{-f}*\e= \eta_{ a-f,r} =\eta_{ 0,r }$,
where the latter identity follows from $\nu(a-f)>0 \geq r$ 
via dominance principle.
The last statement is immediate from 
Lemma \ref{lemma first approx}.
\end{proof}

The geometrical interpretation of continued fractions
mentioned earlier makes use of  the Bruhat-Tits tree of
$\mathrm{SL}_2$ at a discretely valued field $\kappa$.
In the current literature, there exists more than one
interpretation of this tree (c.f. \cite[Chap. II, \S 1]
{SerreTrees} or \cite{BT1}).
For our purposes, we focus on the following realization, 
which comes from the topological structure of $\kappa$:
\begin{quote}
The vertex set $V_\kappa$ of $\mathfrak{t}_{\kappa}$
corresponds to the set of closed balls 
in ${\kappa}$, while the edge set $E_{\kappa}$ of
$\mathfrak{t}_{\kappa}$ corresponds to the pairs of 
closed balls where one is a maximal proper sub-ball of 
the other.
\end{quote}
The definition of the $\mathrm{GL}_2(\kappa)$-action 
on $\mathfrak{t}_{\kappa}$ via simplicial maps,
which can be found in \cite[\S 4]{ArenasArenasContreras},
is entirely analogous to the one recalled in this section 
for the Berkovich space. Moreover,
for each finite extension $K_n$ of $K$,
the topological realization (or polyhedron) 
$\mathbb{T}(K_n)$ of the Bruhat-Tits tree 
$\mathfrak{t}_{K_n}$ 
can be embedded into $\h$, in a way that the 
vertex corresponding to a ball
$B^{[r]}_{a}(K_n)=B_a^{[r]}\cap K_n$ is mapped onto the 
point $\eta_{a,r}$.
This embedding is $\mathrm{GL}_2(K_n)$-equivariant, as 
follows from \cite[\S 4]{ArenasArenasContreras} or 
\cite[Pag. 214]{CBHecke}.
In the sequel, we identify $V_{K_n}$ with 
$\left\lbrace \eta_{a,r}| 
a\in K_n , \,\, r \in 
\mathbb{Z}/n \right\rbrace$.
Analogous identification applies for $E_{K_n}$.

For any two elements
$\eta=\e$ and $\eta'=\eta_{a',r'}$ in $\h$
we write $\eta \succ \eta'$ (or equivalently $\eta' \prec \eta$) 
whenever $r$ exceeds neither $r'$ nor $\nu(a-a')$,
so that, in particular, $\eta=\eta_{a',r}$.
In this case, we say that $\eta$ lies above $\eta'$,
or that $\eta'$ lies below $\eta$.
We denote by $\V^r_{\downarrow}(\eta)$ the set of points below
$\eta$ and at distance $r$ from it.
The modular ray $\mathscr{R}_{\infty}$ of $\h$ is
the subspace 
$\lbrace \eta_{0,r}: r \leq 0 \rbrace\subset \h$.
This ray plays a fundamental role in the theory as next 
result shows:

\begin{proposition}\label{prop fund domain and closing umbrellas}
We have:
\begin{itemize}
\item[(a)] For each $r\leq 0$ and $0<s \leq -r$, the stabilizer
$\mathrm{Stab}_{\mathrm{SL}_2(\A)}(\eta_r)$ of $\eta_r= 
\eta_{0,r}$ acts transitively on 
$\V_{\downarrow}^s \left( \eta_r \right)$, and
\item[(b)] the ray $\mathscr{R}_{\infty}$ is a 
fundamental domain for the action of 
$\mathrm{SL}_2(\A)$ on $\h$.
\end{itemize}
\end{proposition}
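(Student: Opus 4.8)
\textbf{Proof plan for Proposition \ref{prop fund domain and closing umbrellas}.}

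The plan is to prove part (a) first and then deduce part (b) from it together with the ``umbrella-closing'' Lemma \ref{lemma closing umbrellas}. For part (a), I would fix $\eta_r=\eta_{0,r}$ with $r\leq 0$ and a target distance $0<s\leq -r$. A point in $\V_\downarrow^s(\eta_r)$ has the form $\eta_{b,r+s}$ for some $b\in\hat K$ with $\nu(b)\leq r$ (the inequality forces the ball to sit strictly below $\eta_r$ along a genuine downward direction, while $r+s\leq 0$ keeps us in the relevant range). Since $r+s\in\mathbb{Q}$ only for type II targets, but the transitivity claim is geometric and the stabilizer acts by isometries, I would first reduce to understanding which matrices in $\mathrm{SL}_2(\A)$ fix $\eta_r$. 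Using the explicit action formulas for $\mathrm{i}$, $\mathrm{m}_{d_1,d_2,f}$ and $\mathrm{t}_f$ recalled in this section, an element $\mathrm{m}_{d_1,d_2,f}$ fixes $\eta_{0,r}$ precisely when $\nu(f)\geq r$ and $\nu(d_1/d_2)=0$; intersecting with $\mathrm{SL}_2(\A)$ and $r\leq 0$ these conditions are easy to arrange with $f\in\A$. The key computation is then to show that, given two points $\eta_{b,r+s}$ and $\eta_{b',r+s}$ below $\eta_r$ at the same distance, the upper-triangular translation $\mathrm{t}_{b'-b}$ (or a suitable combination with a diagonal element) lies in the stabilizer and carries one to the other. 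The essential point is that $b'-b\in\A$: since both $b,b'$ lie in balls below $\eta_{0,r}$ we can replace them by their best $\A$-approximants via Lemma \ref{lemma first approx}, and the difference of two such approximants is again a Puiseux polynomial of the correct degree, so the translating matrix has entries in $\A$ and determinant $1$.

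For part (b), the plan is to show two things: every point of $\h$ lies in the $\mathrm{SL}_2(\A)$-orbit of some point of $\mathscr{R}_\infty$, and no two distinct points of $\mathscr{R}_\infty$ are in the same orbit. For surjectivity onto the quotient, I would take an arbitrary $\eta=\eta_{a,r}$ and first apply Lemma \ref{lemma closing umbrellas}: there exists $f\in\A$ with $\mathrm{t}_{-f}*\eta_{a,\rho}=\eta_{0,\rho}$ for all $\rho\leq 0$, which already handles the part of the geodesic from $\eta$ up to $\infty^\star$ that lies at nonpositive level, moving it onto $\mathscr{R}_\infty$. When $r>0$ the ball $B_a^{[r]}$ is strictly contained in $B_0^{[0]}$, and I would argue that repeatedly applying the transitivity of part (a) along the finitely-many or infinitely-many downward steps of the geodesic from $\eta_{0,0}$ down to $\eta$ allows us to translate $\eta$ back up onto the ray; concretely, each stabilizer element produced by (a) pushes $\eta$ one ``umbrella'' closer to $\mathscr{R}_\infty$ without changing its level, and composing these (or using $\mathrm{i}$ to flip the part with $\nu(a)\geq r$) brings $\eta$ into $\mathscr{R}_\infty$. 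For injectivity on the ray, I would note that $\mathscr{R}_\infty$ is parametrized by the valuation $r=\nu(\eta)\leq 0$, and since $\mathrm{SL}_2(\A)\subset\mathrm{GL}_2(\hat K)$ acts by isometries preserving the metric $d$, any element carrying $\eta_{0,r}$ to $\eta_{0,r'}$ must preserve the distance to the fixed visual limit $\infty^\star$, forcing $r=r'$.

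The main obstacle I anticipate is the bookkeeping in the transitivity proof of part (a), specifically verifying that the translating Moebius transformation simultaneously (i) has Puiseux-polynomial entries, (ii) genuinely stabilizes $\eta_r$ rather than merely permuting points at the right level, and (iii) sends $\eta_{b,r+s}$ to $\eta_{b',r+s}$ on the nose. The subtlety is that $\A$ is neither Noetherian nor a PID, so one cannot appeal to the usual $\mathrm{SL}_2$-generation arguments available over $E[t]$; instead one must exploit directly that differences of $\A$-approximants stay in $\A$, which is exactly the uniqueness-and-degree content of Lemma \ref{lemma first approx}. A secondary technical point is that, since the valuation group is $\mathbb{Q}$ (dense), the ray $\mathscr{R}_\infty$ is a continuum rather than a simplicial ray, so the ``finitely many steps'' intuition from the Bruhat-Tits tree picture of \cite{SerreTrees} must be replaced by a limiting/density argument; I would handle this by reducing any $\eta_{a,r}$ to some $K_n$-rational approximation, working inside the embedded tree $\mathbb{T}(K_n)\subset\h$ where Serre's classical argument applies, and then passing to the limit using the metric description in \eqref{metriceq}.
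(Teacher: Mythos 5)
Your part (a) is essentially the paper's argument: transitivity is achieved by translations $\mathrm{t}_g$ with $g\in\A$ a difference of the $\A$-approximants supplied by Lemma \ref{lemma first approx} (the paper phrases it as moving every point to the canonical point $\eta_{0,r+s}$ via Lemma \ref{lemma closing umbrellas}, which amounts to the same thing). One sign needs fixing: a point of $\V_{\downarrow}^s(\eta_r)$ is $\eta_{b,r+s}$ with $\nu(b)\geq r$, not $\nu(b)\leq r$; this inequality is exactly what gives $\nu(f'-f)\geq r$ for the approximants and hence makes your translation stabilize $\eta_r$, so with the inequality as you state it the verification you call (ii) would fail. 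Your surjectivity sketch for (b) is also, once made precise, the paper's: ``translate by the approximant, then invert'' is precisely the continued-fraction algorithm of Definition \ref{def Alg of cf}, i.e.\ the maps $\rho_n$, whose matrices $\mathrm{r}_n$ lie in $\mathrm{SL}_2(\A)$; the paper chooses $n$ with $\dg(f_{n+1})+2\sum_{i\leq n}\dg(f_i)\leq r\leq \dg(f_{n+2})+2\sum_{i\leq n+1}\dg(f_i)$ and applies $\mathrm{r}_n$ to land in the range handled by the base case. Your hedge ``finitely-many or infinitely-many downward steps'' hides a real point: one cannot compose infinitely many group elements, and since degrees in $\A$ can be arbitrarily small, finiteness is not automatic; it holds because $\sum_i\dg(f_i)$ diverges for the continued fraction of any element of $\hat{K}$ (Proposition \ref{Prop conv fc}).

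The genuine gap is your injectivity argument for (b). The claim that any $g\in\mathrm{SL}_2(\A)$ with $g*\eta_{0,r}=\eta_{0,r'}$ ``must preserve the distance to the fixed visual limit $\infty^{\star}$'' fails on two counts: $\infty^{\star}$ lies at infinite distance from every point of $\h$, so there is no distance to preserve (at best a Busemann function, defined only up to an additive constant), and, decisively, $\mathrm{SL}_2(\A)$ does not fix $\infty^{\star}$ --- only its upper-triangular subgroup does. Your argument therefore only rules out level-changing by triangular elements, while the entire difficulty is to exclude a non-triangular $g$ carrying one ray point to another. This is not a routine verification: the paper's remark following the proof stresses that Serre's direct argument does not transfer (the relevant dimensions become infinite) and that the authors found no independent proof. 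The paper instead reduces to the classical theorem: $g$ has its finitely many entries in some $A_n$, both ray points lie in the embedded realization $\mathbb{T}(K_n)\subset\h$, and by \cite[Ch. II, \S 1.6]{SerreTrees} the standard ray is a fundamental region for $\mathrm{SL}_2(A_n)$ acting on $\mathfrak{t}_{K_n}$, forcing $r=r'$. Ironically, you have exactly this tool in your closing paragraph --- working inside $\mathbb{T}(K_n)$ where Serre's classical argument applies --- but you spend it on surjectivity, where it is not needed (any $\eta_{a,r}$ equals $\eta_{a',r}$ on the nose for $a'\in\K$ with $\nu(a-a')\geq r$, by density of $\K$, so no limiting argument ever arises); redirect it to injectivity and your plan closes.
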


\begin{proof}
Let $\eta':= \eta_{a,r'}$ be a point in $\V_{\downarrow}^s(\eta_r)$.
Since $\eta'$ lies below $\eta_r$, we have that $s=d(\eta_r,\eta')= r'-r$.
In particular, we have $r' \leq 0$.
Moreover, since $\eta_r\succ \eta'$, then $\e= \eta_{0,r}$, and  
therefore $\nu(a) \leq r\leq0$.
Then, Lemma \ref{lemma closing umbrellas} implies that 
$\mathrm{t}_{-f} * \eta'= \eta_{0,r'}=\eta_{0,s+r }$, 
for certain $f \in \A$ with $\nu(f)=\nu(a)\leq r$. In particular,
$\mathrm{t}_{-f}$ fixes $\eta_r$, and statement (a) follows.

Now, we prove statement (b). Let $\eta=\e \in \h$.
If $r\leq 0$ the result follows from the preceding paragraph, so we assume
$r>0$. Let $\mathrm{App}(a)=\{(f_i,a_i)\}_{i=0}^\infty$. Since
$\nu(a-f_0)=\dg(f_1)$, statement (b) clearly holds for $r\leq\dg(f_1)$, since $\mathrm{i}*(\mathrm{t}_{-f_0}*\eta)$ is in the ray.
Note that there exists
$n\in\mathbb{Z}_{\geq0}$ such that
$$\dg(f_{n+1})+2\sum_{i=1}^n\deg f_i\leq r\leq 
\dg(f_{n+2})+2\sum_{i=1}^{n+1}\deg f_i.$$
If $\rho_n$ corresponds to the matrix $\mathrm{r}_n$,
then $\mathrm{r}_n*\eta= \eta_{a_{n+1},r'}$
with $r'=r+\nu\big(\rho_n'(a)\big)=
r-2\sum_{i=1}^{n+1}\deg f_i$. In particular, we have
$-\dg(f_{n+1})\leq r'\leq \dg(f_{n+2})$, so the same argument applies.
We conclude that $\eta$ is in the $\mathrm{SL}_2(\A)$-orbit of some point 
in $\mathscr{R}_{\infty}$.

Now, we prove that no two points in $\mathscr{R}_{\infty}$ 
belong to the same $\mathrm{SL}_2(\A)$-orbit.
Indeed, note that, by \cite[\S 1.6, Ch. II]{SerreTrees},
the subgraph whose vertices
corresponds to the balls $B_0^{[-i/n]}$, for 
$i=0,1,2,\dots$ is a fundamental
region for the $\mathrm{SL}_2(A_n)$-action on the 
graph $\mathfrak{t}_{K_n}$.
Therefore, the corresponding ray is a fundamental region for
the action of $\mathrm{SL}_2(A_n)$ on the topological 
realization $\mathbb{T}(K_n)$.
Now assume there is a matrix $\mathrm{g}\in\mathrm{SL}_2(\A)$
satisfying $\mathrm{g}*\eta_r=\eta_{r'}$. Then 
$\mathrm{g}\in\mathrm{SL}_2(A_n)$ for some 
$n\in\mathbb{Z}_{>0}$.
Furthermore, $\mathscr{R}_{\infty}$ is contained 
in $\mathbb{T}(K_n)$,
when the later is identified with a subspace of $\h$.
In particular, we have $\mathrm{g}*\eta_r=\eta_{r'}$ for two points
$\eta_r,\eta_{r'}$ in a fundamental region for the action
of $\mathrm{g}\in\mathrm{SL}_2(A_n)$ on $\mathbb{T}(K_n)$.
We conclude that the points coincide, whence $r=r'$
and the result follows.
\end{proof}

\begin{xrem}
    In the second part of the preceding proof 
    we reduced the problem to a classical result,
    instead of writing an independent proof
    since the usual dimension argument used
    by Serre, in the given reference, 
    does not carry well to
    our setting, as the corresponding 
    dimensions are infinite. We have not been
    able to find a direct proof thus far.
\end{xrem}

\begin{lemma}\label{lemma action on the mod ray}
For every point $\eta\in\h$, and for 
    every sufficiently small $\epsilon>0$, every point 
    $\eta'\in\h$ satisfying $d(\eta,\eta')<\epsilon$ must satisfy 
    $d(y,y')<\epsilon$, where $y\in\mathscr{R}_{\infty}$ is 
    the point with the same image as $\eta$, and $y'$ is defined
    analogously.       
\end{lemma}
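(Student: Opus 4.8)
The plan is to reinterpret the assignment $\eta\mapsto y$ as a projection $\pi\colon\h\to\mathscr{R}_{\infty}$ onto the fundamental domain and to prove that $\pi$ is locally non-expanding; the claimed inequality $d(y,y')<\epsilon$ is then immediate. First I would reduce to a purely local statement at a point of the ray. Given $\eta$, choose by Prop. \ref{prop fund domain and closing umbrellas}(b) an element $g\in\mathrm{SL}_2(\A)$ with $g*\eta=y\in\mathscr{R}_{\infty}$, and set $\zeta=g*\eta'$. Since $\mathrm{SL}_2(\A)$ acts isometrically, $d(\zeta,y)=d(\eta',\eta)<\epsilon$, and because $\zeta$ and $\eta'$ lie in the same orbit their representatives agree, so $\pi(\zeta)=\pi(\eta')=y'$. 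Thus it suffices to show that for every ray point $y=\eta_{0,r}$ and every $\zeta$ with $d(\zeta,y)$ small enough one has $d(\pi(\zeta),y)\le d(\zeta,y)$.

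Next I would compute $\pi(\zeta)$ explicitly near $y$ using the transitivity in Prop. \ref{prop fund domain and closing umbrellas}(a). Writing $\zeta=\eta_{a,s}$, in the genuinely branching case $\nu(a)<\min(r,s)$ the geodesic $[y,\zeta]$ runs up the ray to the branch point $w=\eta_{0,\nu(a)}\in\mathscr{R}_{\infty}$ and then leaves it; set $\delta=r-\nu(a)$ for the distance from $y$ up to $w$ and $\delta'=s-\nu(a)$ for the distance from $w$ down to $\zeta$, so $d(\zeta,y)=\delta+\delta'$. Since $w\in\mathscr{R}_{\infty}$, part (a) provides an element of $\mathrm{Stab}_{\mathrm{SL}_2(\A)}(w)$ carrying $\zeta\in\V_{\downarrow}^{\delta'}(w)$ to the ray point $\eta_{0,s}$ lying directly below $w$ at the same depth $\delta'$; hence $\pi(\zeta)=\eta_{0,s}$ and $d(\pi(\zeta),y)=|s-r|=|\delta'-\delta|\le\delta+\delta'=d(\zeta,y)$. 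In the remaining non-branching configurations $\zeta$ is either already on the ray or lies directly below a point of the ray, and the same application of part (a) gives $d(\pi(\zeta),y)=d(\zeta,y)$; in every case $\pi$ is non-expanding at $y$.

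The role of ``sufficiently small $\epsilon$'' is to keep the computation inside the linear part of the ray: choosing $\epsilon\le|\nu(y)|$ guarantees that the folded point $\eta_{0,s}$ still has non-positive height and so genuinely lies in $\mathscr{R}_{\infty}$. The one configuration needing separate care is when $y$ is the endpoint $\eta_{0,0}$, where the downward continuation of the ray is folded back by the involution $\mathrm{i}$; since $\mathrm{i}*\eta_{0,s}=\eta_{0,-s}$, this acts as a reflection fixing the endpoint and therefore still preserves distances to $y$, so the non-expanding bound persists. I expect the main obstacle to be the second paragraph, namely verifying that near a ray point the projection $\pi$ is exactly the ``fold branches to the same depth'' map: this requires combining the transitivity of part (a) with the fact, from part (b), that distinct ray points are never identified, so that $\pi(\zeta)$ is pinned down unambiguously and no unexpected coincidences shorten or lengthen the image segment.
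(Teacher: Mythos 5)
Your reduction to a ray point $y=\eta_{0,r}$ and your treatment of the case $r<0$ (take $\epsilon\le|r|$, fold the branch at $w=\eta_{0,\nu(a)}$ using Prop.~\ref{prop fund domain and closing umbrellas}(a), and pin down $\pi(\zeta)$ using part (b)) are correct, and this is essentially the paper's own argument for that case. The genuine gap is the endpoint case $y=\eta_{0,0}$. Your fix there covers only points $\zeta=\eta_{0,s}$, $s>0$, on the downward continuation of the ray, which $\mathrm{i}$ indeed reflects to $\eta_{0,-s}$. But a generic nearby point is $\zeta=\eta_{a,s}$ with $\nu(a)\geq 0$ and $0<s<\epsilon$, hanging below $\eta_{0,0}$ in one of infinitely many other directions, and for such a point $\mathrm{i}*\eta_{a,s}=\eta_{1/a,\,s-2\nu(a)}$; when $\nu(a)=0$ this still has height $s>0$ and still lies off the ray, so no single reflection helps. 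Worse, no ``fold branches to equal depth'' description exists at the endpoint: part (a) is vacuous for $r=0$ (the range $0<s\leq-r$ is empty), and the stabilizer of $\eta_{0,0}$ is genuinely not transitive on $\V_{\downarrow}^{s}(\eta_{0,0})$. For rational $s>0$, the points $\eta_{0,s}$ and $\eta_{t^{-s/2},s}$ both lie at distance $s$ below $\eta_{0,0}$, yet their ray representatives are $\eta_{0,-s}$ and $\eta_{0,0}$ respectively (the continued fraction of $t^{-s/2}$ is $[0,t^{s/2}]$), so they lie in different orbits; in particular the projection near the endpoint is not distance-preserving, only (as it turns out) non-expanding. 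Computing $y'$ for a general nearby $\zeta$ requires iterating the continued-fraction algorithm, alternating $\mathrm{t}_{-f_i}$ and $\mathrm{i}$ as in the proof of Prop.~\ref{prop fund domain and closing umbrellas}(b); one lands at $\eta_{0,-|s'|}$ with $s'=s-2\sum_{i=1}^{n+1}\dg(f_i)$, and the estimate $|s'|\leq s$ still has to be proved. Your proposal never establishes it, and your prescription ``$\epsilon\leq|\nu(y)|$'' degenerates to $\epsilon\leq 0$ at the endpoint, so as written your proof provides no admissible $\epsilon$ there at all.

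The paper avoids this computation with a short indirect argument you could adopt: write $y'=\mathrm{g}*\eta'=\eta_{0,r''}$ and set $y_1=\mathrm{g}*\eta_{0,0}$, so that $d(y_1,y')=d(\eta',\eta_{0,0})<\epsilon$. If $r''<-\epsilon$, the general inequality $d\geq|r-r'|$ (read off from the metric formula) forces the height $r'''$ of $y_1$ to satisfy $|r'''-r''|<\epsilon$, hence $r'''<0$; but no point of negative height can lie in the orbit of $\eta_{0,0}$, since by Lemma~\ref{lemma closing umbrellas} any $\eta_{a''',r'''}$ with $r'''<0$ has ray representative $\eta_{0,r'''}\neq\eta_{0,0}$, contradicting part (b). Hence $r''\geq-\epsilon$ and $d(y,y')\leq\epsilon$, for every $\epsilon>0$, with no explicit computation of $y'$ needed.
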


\begin{proof}
    Replacing $\eta$ by another element in 
    the same orbit we can assume
    that $\eta=y\in\mathscr{R}_{\infty}$. Assume therefore $\eta=\eta_{0,r}=\eta_r$ for $r\leq 0$. Let 
    $\eta'= \eta_{ a',r'}$ be a point with $d(\eta,\eta')<\epsilon$.
    Then Equation (\ref{metriceq}) shows that
    $|r-r'|\leq\epsilon$. Indeed, this is immediate if
    $\nu(a')\geq\mathrm{min}(r,r')$, so we assume
    $\nu(a')<\mathrm{min}(r,r')$, and we write
    $$d(\eta,\eta')=
    \mathrm{max}(r,r')+\mathrm{min}(r,r')-2\nu(a')>
    \mathrm{max}(r,r')-\mathrm{min}(r,r')=|r-r'|.$$
    When $r<0$, there is a neighborhood of $x$ where $\eta'= \eta_{ a,r'}$ implies $y'=\eta_{0,r'}=\eta_{r'}$,
    and therefore $d(y,y')=|r-r'|<\epsilon$,
    so we are left with the 
    case $r=0$.

    Now assume $\eta=y=\eta_{0,0}$. Then we write
    $y'=\mathrm{g}*\eta'= \eta_{ 0,r''}$, for some 
    $\mathrm{g}\in \mathrm{SL}_2(\A)$.
    Set $y_1=\mathrm{g}*\eta$.
    Note that $d(y',y_1)=d(\eta',\eta)<\epsilon$.
    If $r''<-\epsilon$, the preceding case, 
    with $y'$ instead of $\eta$, tells us that 
    $y_1=\eta_{a''',r'''}$ 
    with $|r'''-r''|<\epsilon$. In particular,
    $r'''<0$, which is absurd, since no element
    of the form $\eta_{ a''',r'''}$
    with $r'''<0$ can be in 
    the same orbit as $\eta_{0,0}$. We conclude that
    $d(y,y')=-r''<\epsilon$ and the result follows.  
\end{proof}

\begin{proposition}\label{prop iso with the ray}
    The canonical projection $\pi:\h\rightarrow\mathrm{SL}_2(\A)\backslash \h$ induces a homeomorphism 
    $$\tilde{\pi}:\mathscr{R}_{\infty}\rightarrow
    \mathrm{SL}_2(\A)\backslash \h.$$
\end{proposition}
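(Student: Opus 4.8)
The plan is to prove that $\tilde{\pi}$ is a continuous bijection that is also open, hence a homeomorphism. The bijection part is already essentially in hand from Proposition \ref{prop fund domain and closing umbrellas}: statement (b) says $\mathscr{R}_\infty$ is a fundamental domain for the $\mathrm{SL}_2(\A)$-action on $\h$, which by definition means every orbit meets $\mathscr{R}_\infty$ (surjectivity of $\tilde\pi$) in exactly one point (injectivity of $\tilde\pi$). So the restriction of $\pi$ to $\mathscr{R}_\infty$ is a set-theoretic bijection onto the quotient, and $\tilde\pi$ is this bijection. Continuity of $\tilde\pi$ is immediate, since it is the restriction of the continuous quotient map $\pi$ to the subspace $\mathscr{R}_\infty$, composed with nothing; the quotient topology makes $\pi$ continuous by construction.

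The real content is that $\tilde\pi$ is a homeomorphism rather than merely a continuous bijection, and for this I would show that $\tilde\pi^{-1}$ is continuous, equivalently that $\tilde\pi$ is an open map. This is exactly what Lemma \ref{lemma action on the mod ray} is designed to deliver. First I would observe that a basic open set in the quotient $\mathrm{SL}_2(\A)\backslash\h$ is, by definition of the quotient topology, a set $U$ whose preimage $\pi^{-1}(U)$ is open in $\h$. To see that $\tilde\pi$ is open, I take an open ball $B(y,\epsilon)\cap\mathscr{R}_\infty$ around a point $y\in\mathscr{R}_\infty$ and show its image $\pi\big(B(y,\epsilon)\cap\mathscr{R}_\infty\big)$ is open in the quotient. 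Concretely, I would show that the image $\tilde\pi\big(B(y,\epsilon)\cap\mathscr{R}_\infty\big)$ contains $\pi\big(B(y,\epsilon')\big)$ for a suitable $\epsilon'\leq\epsilon$, and that such sets $\pi(B(\eta,\delta))$ are a neighborhood basis of points in the quotient, using Lemma \ref{lemma action on the mod ray}.

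The mechanism is the following: Lemma \ref{lemma action on the mod ray} says that if $\eta,\eta'$ are close in $\h$ (within $\epsilon$), then their canonical representatives $y,y'$ in $\mathscr{R}_\infty$ are also close (within $\epsilon$). This is precisely the statement that the map ``take the representative in the fundamental domain'' does not increase distances by too much, which is exactly the local injectivity/openness control one needs. Given an open neighborhood $V$ of a point $\bar\eta=\pi(\eta)$ in the quotient, with $y$ its representative in $\mathscr{R}_\infty$, I would lift to find $\epsilon>0$ with $B(y,\epsilon)$ mapping into $\pi^{-1}(V)$-saturated data; then Lemma \ref{lemma action on the mod ray} guarantees that every $\eta'$ near $y$ has its representative $y'$ within $\epsilon$ of $y$ in $\mathscr{R}_\infty$, so $\tilde\pi^{-1}$ maps the quotient-neighborhood of $\bar\eta$ into the $\mathscr{R}_\infty$-ball of radius $\epsilon$ around $y$. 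This shows $\tilde\pi^{-1}$ is continuous at $\bar\eta$.

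The main obstacle, and the step deserving the most care, is the interface between the intrinsic metric $d$ on $\h$ and the quotient topology on $\mathrm{SL}_2(\A)\backslash\h$: one must verify that the quotient metric (or the quotient topology) is correctly captured by the distances controlled in Lemma \ref{lemma action on the mod ray}, and in particular that $\pi$ is an open map onto its image so that images of metric balls are genuinely open in the quotient. Since $\mathrm{SL}_2(\A)$ acts by isometries (as noted via \cite[2.13 \& 2.15]{BakerBerkovich}), the quotient inherits a well-behaved pseudometric $\bar d(\bar\eta,\bar\eta')=\inf_{\mathrm{g}}d(\eta,\mathrm{g}*\eta')$, and the crux is that Lemma \ref{lemma action on the mod ray} upgrades this to the honest statement that small quotient-distance forces small $\mathscr{R}_\infty$-distance between representatives — i.e. that no ``folding'' of the fundamental domain occurs that would make $\tilde\pi^{-1}$ discontinuous at the endpoint $\eta_{0,0}$ of the ray. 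The endpoint $r=0$ is precisely where the argument is delicate, which is why Lemma \ref{lemma action on the mod ray} treats it as a separate case; I would make sure the openness argument invokes that case correctly rather than assuming uniform behavior along the whole ray.
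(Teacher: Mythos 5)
Your proposal is correct, and it rests on exactly the same two pillars as the paper's proof: bijectivity of $\tilde\pi$ from Proposition \ref{prop fund domain and closing umbrellas}(b), and Lemma \ref{lemma action on the mod ray} as the sole source of continuity of $\tilde\pi^{-1}$. The difference lies in the final topological packaging. You establish continuity of $\tilde\pi^{-1}$ by showing $\tilde\pi$ is open, working with images $\pi\big(B(\eta,\delta)\big)$ of metric balls; this forces you to verify --- as you correctly flag in your last paragraph as the main obstacle --- that $\pi$ is an open map and that such images form a neighborhood basis in the quotient. That verification is standard for an action by isometries (the saturation $\pi^{-1}\big(\pi(U)\big)=\bigcup_{\mathrm{g}}\mathrm{g}*U$ of an open set is open, and consequently the quotient pseudometric topology agrees with the quotient topology), but it is a genuine extra step that must be written out. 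The paper sidesteps it entirely with the universal property of the quotient topology: $\tilde\pi^{-1}$ is continuous if and only if the composite $\tilde\pi^{-1}\circ\pi:\h\rightarrow\mathscr{R}_{\infty}$ is continuous, and this composite is precisely the retraction $\eta\mapsto y$ sending each point to its representative on the ray, whose pointwise continuity is verbatim the statement of Lemma \ref{lemma action on the mod ray}. So your route costs the openness-of-$\pi$ argument, while the paper's buys a four-line proof in which the lemma is quoted with no intermediary; both are sound, and both place the real content where it belongs, in Lemma \ref{lemma action on the mod ray}.
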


\begin{proof}
    The fact that $\mathscr{R}_{\infty}$ is a fundamental region 
    tells us that the map $\tilde{\pi}$ is bijective, and it is certainly
    continuous. It suffices, therefore, to prove that 
    $\tilde{\pi}^{-1}$ is continuous. By the universal
    property of the quotient, it suffices to prove that 
    $\tilde{\pi}^{-1}\circ\pi:
    \h\rightarrow\mathscr{R}_{\infty}$
    is continuous. A function is continuous 
    if it is continuous at every point. Therefore, 
    the result follows from Lemma \ref{lemma action on the mod ray}.
\end{proof}

\section{Continued fractions associated to Type IV points}\label{sec cf for points of type IV}

A descending isometric geodesic (or DIG) is a map 
$\tilde{w}:(-\infty,s)\rightarrow\h$, for 
$s\in\mathbb{R}\cup\{\infty\}$,
that satisfies the relation $\nu\big(\tilde{w}(r)\big)=r$, and
is maximal, in the sense that cannot be extended to a larger 
open interval.
When $s=\infty$, there is a unique element $a\in\hat{K}$
for which the DIG can be written as 
$r\mapsto \e$ on the whole domain.
 In fact,
$a$ can be characterized as the only point $a$
belonging to every ball $B_{\tilde{w}(r)}$.
In this case the type I point $a^{\star}$ is called the
limit at $\infty$ of the DIG, and we say that
$\tilde{w}(r)$ converges to $a^{\star}$.
We call $\tilde{w}$ a full DIG in this case.
Conversely, for every element $a\in\hat{K}$
we can define a full DIG $\tilde{w}_a: \mathbb{R} \to \h$ by the formula
$\tilde{w}_a(t)=\eta_{a,t}$.
We might write $\tilde{w}_{a^{\star}}$ instead of
$\tilde{w}_a$, for convenience, for example to treat
type I and type IV points simultaneously.

On the other hand, since 
$r\mapsto \tilde{w}(r)$ is an isometry by definition, 
a non-full DIG must  converge to a point $\tilde{w}(s)$ 
in the completion $\mathbb{H}^{\mathrm{Berk}}$ of $\h$. 
Moreover, if $\tilde{w}(s)=\eta_{a,s} \in\h$,
then $\tilde{w}$ can be extended to all of $\mathbb{R}$ by setting
$\tilde{w}(r)=\e$ for $r>s$. Therefore, 
by the maximality condition, a non-full DIG
converges to a type IV point. Since for every type IV point
there is a unique geodesic connecting it to $\infty^{\star}$,
there is a unique such DIG corresponding to any type IV
point. Analogously to the case of type I points, for every 
type IV point $b$ we denote the corresponding
DIG by $\tilde{w}_b$.

Now, let $\pi:\h\rightarrow\mathrm{SL}_2(\A)\backslash \h$
be the canonical projection. For every DIG $\tilde{w}$ we would
like to study the corresponding trajectory, or promenade,
in $\mathrm{SL}_2(\A)\backslash \h$. This can be defined 
as the composition $\pi\circ\tilde{w}:(-\infty,s) \to \mathrm{SL}_2(\A)\backslash \h$.
It follows from the Proposition \ref{prop iso with the ray}
that we can regard such composition as a promenade on the 
ray $\mathbb{R}_{\geq0}\cong\mathscr{R}_{\infty}$, 
so we can study it as a 
real valued function. To make this precise, we define 
$\pi_0:
\h\rightarrow\mathbb{R}_{\geq0}$ as the composition
$\pi_0=\phi\circ\tilde{\pi}^{-1}\circ\pi$, where $\phi$
is the natural homeomorphism $\phi:\mathscr{R}_{\infty}
\rightarrow\mathbb{R}_{\geq0}$ defined by
$\phi\big(\eta_{0,r})=-r$.
For any isometrical geodesic $\tilde{v}:
(t_1,t_2)\rightarrow\h$ we define the associated 
real valued function (RVF) as
$v=\pi_0\circ\tilde{v}:(t_1,t_2) \to \mathbb{R}_{\geq 0}$.

\begin{lemma} \label{p61}
Let $v$ be the associated RVF of an isometrical geodesic
$\tilde{v}:(t_1,t_2)\rightarrow\h$.
Assume $v$ fails to take the value $0$
in the interval $(t_1,t_2)$. 
Then $v$ is either monotonic with slope
$\pm1$ on the whole interval, or it has a unique local maximum,
while being monotonic with slope $\pm1$ on either side.
\end{lemma}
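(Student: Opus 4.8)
The plan is to reduce the statement to a local description of the folding map $\pi_0$ along the geodesic, from which both the slope condition and the absence of local minima will follow; the structural dichotomy is then a soft fact about real functions. Concretely, I will show that $v=\pi_0\circ\tilde v$ has one-sided derivatives in $\{+1,-1\}$ at every point of $(t_1,t_2)$ and possesses no local minimum there. Granting this, observe that a local minimum of such a $v$ at $t_0$ is precisely the situation where the right derivative jumps from $-1$ (just left of $t_0$) to $+1$; forbidding it means that the right derivative, viewed as a $\{+1,-1\}$-valued function, never increases. Hence it equals $+1$ on some initial subinterval and $-1$ afterwards, with a single threshold $t^{\ast}\in[t_1,t_2]$. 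The cases $t^{\ast}=t_1$ and $t^{\ast}=t_2$ give $v$ monotone with slope $-1$ or $+1$, while an interior $t^{\ast}$ gives a unique local maximum with slope $+1$ to its left and $-1$ to its right, which is exactly the claim.

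The heart of the argument is the local model for $\pi_0$ at a point $\eta$ with $\pi_0(\eta)>0$. I would first treat a point $\eta_{0,r}$ of the modular ray with $r<0$, working in a ball of radius $\delta<|r|$. The points above $\eta_{0,r}$ form the single upward segment $\{\eta_{0,s}:r-\delta\le s\le r\}$, on which $\pi_0(\eta_{0,s})=-s$, so $\pi_0$ ascends at unit rate along this one direction. For the downward directions, Proposition \ref{prop fund domain and closing umbrellas}(a) says that $\mathrm{Stab}_{\mathrm{SL}_2(\A)}(\eta_{0,r})$ acts transitively on each sphere $\V^{s}_{\downarrow}(\eta_{0,r})$ with $0<s\le -r$; since $\eta_{0,r+s}$ lies on that sphere, \emph{every} point at downward distance $s$ (no matter how the tree branches below) is $\mathrm{SL}_2(\A)$-equivalent to $\eta_{0,r+s}$, whence $\pi_0$ takes the value $-(r+s)=\pi_0(\eta_{0,r})-s$ there. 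Thus $\pi_0$ descends at unit rate along every downward direction. Transporting by an isometry $g\in\mathrm{SL}_2(\A)$ with $g\cdot\eta\in\mathscr{R}_\infty$ and using $\pi_0\circ g=\pi_0$, I obtain the same picture at a general $\eta$: there is a unique \emph{ascending} direction (rate $+1$) and every other direction \emph{descends} (rate $-1$).

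Now I would apply this along $\tilde v$. The hypothesis that $v$ never vanishes means $\tilde v$ avoids the orbit of $\eta_{0,0}$, so every value $\tilde v(t)$ lies in the region $\pi_0>0$ where the local model holds; consequently $v$ is locally linear with slope $\pm1$ and has one-sided derivatives in $\{+1,-1\}$ everywhere. Since $\tilde v$ is an isometric geodesic it does not backtrack, so it enters and leaves each $\tilde v(t_0)$ along two distinct directions. A local minimum of $v$ at $t_0$ would force both of these directions to be ascending for $\pi_0$, contradicting the uniqueness of the ascending direction established above. Hence $v$ has no local minimum, and the reduction of the first paragraph finishes the proof.

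I expect the main obstacle to be the local model, and within it the transitivity in Proposition \ref{prop fund domain and closing umbrellas}(a): this is exactly what folds all downward branches of a ray point onto a single descending direction and is the mechanism forbidding local minima. A point deserving care is that the ascending direction should not be confused with the direction of decreasing valuation (toward $\infty^{\star}$): the two agree only on $\mathscr{R}_\infty$ itself, whereas at a general point the ascending direction is merely the image of the ray's upward direction under the transporting isometry $g$, and need not decrease $\nu$. Because the no-local-minimum argument is phrased purely in terms of $\pi_0$-directions rather than the valuation, this discrepancy causes no difficulty.
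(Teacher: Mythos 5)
Your proof is correct, but it takes a genuinely different route from the paper's. The paper argues globally: since $\pi_0(\eta_{a,r})=-r$ whenever $r\leq 0$ (Lemma \ref{lemma closing umbrellas}), one conjugates the geodesic by a \emph{single} $\mathrm{g}\in\mathrm{SL}_2(\A)$ so that $\nu\big(\tilde{v}(t)\big)<0$ for at least one $t$; the hypothesis that $v$ never vanishes, together with the fact that any point $\eta_{a,0}$ lies in the orbit of $\eta_{0,0}$ and continuity of $\nu$ along $\tilde{v}$, then forces $\nu\big(\tilde{v}(t)\big)<0$ for \emph{every} $t$, so that $v=-\nu\circ\tilde{v}$ on the whole interval, and the conclusion follows at once from the explicit description of geodesics in $\h$: along $[\eta,\eta']$ the quantity $-\nu$ rises at unit rate up to the join $\eta\vee\eta'$ and then falls at unit rate. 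You instead work locally at each point (pointwise conjugation onto $\mathscr{R}_{\infty}$, a local model for $\pi_0$ with a unique ascending direction, all others descending) and then globalize by a soft real-analysis argument (one-sided slopes $\pm1$, no local minima, hence a single threshold). Both proofs run on the same fuel, namely the $\mathrm{SL}_2(\A)$-invariance of $\pi_0$ and the identity $\pi_0(\eta_{a,r})=-r$ for $r\leq0$; in fact your appeal to Proposition \ref{prop fund domain and closing umbrellas}(a), which you single out as the heart of the matter, can be replaced by the lighter Lemma \ref{lemma closing umbrellas}: any point below $\eta_{0,r}$ at distance $s\leq -r$ is of the form $\eta_{a,r+s}$ with $\nu(a)\geq r$, and that lemma alone evaluates $\pi_0$ there. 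What the paper's route buys is brevity: one conjugation plus the concrete tent shape of geodesics replaces your local-to-global bookkeeping. What your route buys is robustness and a clearer view of the folding mechanism: you isolate exactly why local minima cannot occur (all descending directions at a point are identified in the quotient while only one direction ascends), and your argument would survive in settings where an explicit parametrization of geodesics is less accessible. Your closing caveat, that the ascending direction at a general point need not be the direction of decreasing valuation, is correct and worth keeping in mind.
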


\begin{proof}
    Replacing $\tilde{v}$ by a geodesic of the form 
    $\mathrm{g}*\tilde{v}$, for 
    $\mathrm{g}\in\mathrm{SL}_2(\A)$, we can assume that
    $\nu\big(v(t)\big)<0$ for at least one value of $t\in(t_1,t_2)$.
    Note that $\pi_0\big(\e\big)=-r$
    whenever $r\leq0$, according to Lemma 
    \ref{lemma closing umbrellas}. In particular,
    every element of
    the form $\eta_{a,0}$ is in the orbit of 
    $\eta_{0,0}$, so we conclude 
    that $\nu\big(v(t)\big)<0$ for every value
    $t \in (t_1,t_2)$.
    Now the result follows from the explicit description of 
    the geodesic between two points and the preceding formula
    for $\pi_0$.
\end{proof}

\begin{lemma}\label{p62}
Let $\tilde{v}:(t_1,t_2)\rightarrow\h$ be an 
isometrical geodesic
whose associated RVF $v$ is increasing
in the interval $(t_1,t_2)$. Then $\tilde{v}$ 
can be extended
to an interval of the form $(t_1,\infty)$.
\end{lemma}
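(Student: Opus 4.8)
The plan is to work directly with the explicit description of the RVF $v$ coming from the formula for $\pi_0$ and the geometry of the $\mathbb{R}$-tree $\h$. First I would use the group action to normalize the situation, exactly as in the proof of Lemma \ref{p61}: after replacing $\tilde{v}$ by $\mathrm{g}*\tilde{v}$ for a suitable $\mathrm{g}\in\mathrm{SL}_2(\A)$, I may assume that the image of $\tilde{v}$ lies in the part of $\h$ where $\nu\big(\tilde{v}(t)\big)<0$ and that $v$ is literally computed by the rule $v(t)=-\nu\big(\tilde{v}(t)\big)$ on the relevant range. Since $v$ is increasing on $(t_1,t_2)$ and has slope $\pm1$ by Lemma \ref{p61}, the increasing hypothesis forces slope $+1$ throughout, so that $v$ extends at least to a half-line on the $\mathscr{R}_\infty$ side; the content of the lemma is that the underlying geodesic $\tilde{v}$ itself, not merely its projection, extends to $(t_1,\infty)$.

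Next I would translate ``$v$ increasing with slope $+1$'' back into a statement about $\tilde{v}$ in $\h$. Writing $\tilde{v}(t)=\eta_{a(t),r(t)}$, the fact that $\tilde{v}$ is an isometrical geodesic with $\nu\big(\tilde{v}(t)\big)=r(t)$ and that $v$ increases with unit slope means that, as $t$ grows, the geodesic descends into smaller and smaller balls $B_{\tilde{v}(t)}=B_{a(t)}^{[r(t)]}$ with $r(t)\to\infty$; equivalently the $\succ$-relation organizes these points into a strictly descending chain of balls. The key observation is that a geodesic moving in the descending direction is precisely a portion of a DIG in the sense of the section: the condition $\nu\big(\tilde{v}(t)\big)=r(t)$ with increasing $v$ is exactly the defining relation of a descending isometric geodesic up to reparametrization. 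I would make this identification explicit so that extending $\tilde{v}$ to $(t_1,\infty)$ becomes the problem of extending a descending ray to arbitrarily large $r$.

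Then I would invoke the classification of DIGs established at the start of this section. A descending isometric geodesic is either full, hence defined on all of $\mathbb{R}$, or converges in finite ``time'' to a type IV point; in the latter case the domain is bounded above by some $s<\infty$. The increasing slope-$+1$ behavior of $v$ guarantees that $\tilde{v}$ keeps descending, so its maximal extension as a DIG has the form $(-\infty,s)$ with $s\in\mathbb{R}\cup\{\infty\}$. To finish I must rule out $s<\infty$ being an obstruction to extension on the $(t_1,\infty)$ interval: if the maximal DIG is full then $s=\infty$ and we are done immediately, while if it converges to a type IV point, the domain near that point still contains every finite $t$, so the geodesic is defined on $(t_1,\infty)$ regardless. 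In either case the extension exists, which is the assertion.

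The main obstacle I anticipate is the bookkeeping at the boundary between the two regimes in Lemma \ref{p61}, namely ensuring that the normalization via $\mathrm{SL}_2(\A)$ does not secretly shorten the domain and that ``increasing'' genuinely forces the descending (slope $+1$) case rather than the local-maximum case. Once $v$ is pinned to slope $+1$ everywhere on $(t_1,t_2)$, the extension is geometrically forced, since descending geodesics can always be prolonged toward increasing $r$ up to their natural type IV or type I limit; the only real work is confirming that the isometry condition $\nu\big(\tilde{v}(t)\big)=t+\text{const}$ persists under the extension and that the extended map remains a genuine geodesic in the $\mathbb{R}$-tree, which follows from the uniqueness of geodesics in $(\h,d)$ recorded earlier in \S\ref{sec geom int of cf}.
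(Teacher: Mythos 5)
Your argument has the geometry of the promenade inverted, and the error is fatal. After your normalization (the one borrowed from the proof of Lemma \ref{p61}), we have $v(t)=-\nu\big(\tilde v(t)\big)$ on the image, so the hypothesis that $v$ is increasing means $\nu\big(\tilde v(t)\big)$ is \emph{decreasing}: the balls $B_{\tilde v(t)}$ \emph{grow} as $t$ grows, and the geodesic \emph{ascends} toward $\infty^{\star}$. It is therefore not ``a portion of a DIG'': a DIG satisfies $\nu\big(\tilde w(r)\big)=r$, i.e., $\nu$ increasing and balls shrinking, which in the region $\nu<0$ corresponds to $v$ \emph{decreasing}. Because of this, the dichotomy you invoke (full DIG versus convergence to a type IV point) does not apply to your situation; worse, your handling of the type IV branch is false on its own terms: a non-full DIG has maximal domain $(-\infty,s)$ with $s<\infty$, so by maximality it cannot be defined on, or extended to, $(t_1,\infty)$ --- the claim that ``the domain near that point still contains every finite $t$'' contradicts the bound $s<\infty$ you stated one sentence earlier. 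Indeed, if increasing $v$ really did correspond to descending geodesics, the lemma would be \emph{false}, precisely because descending geodesics limiting to type IV points stop at a finite parameter value; the lemma is true exactly because of the orientation you reversed.

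The correct argument is the ascending one, and it is essentially one line (it is the paper's proof). Normalize instead with $\mathrm{g}\in\mathrm{SL}_2(\A)$ so that the image of $\tilde v$ meets the modular ray, say $\tilde v(t)=\eta_{0,-r}$ with $r=v(t)>0$, which is possible by Prop. \ref{prop fund domain and closing umbrellas}. At such a point, every direction other than the one going up the ray leads below $\eta_{0,-r}$, and by Lemma \ref{lemma closing umbrellas} a point lying below $\eta_{0,-r}$ at distance $d\leq r$ has $\pi_0$-value $r-d<r$; hence the increasing hypothesis forces $\tilde v(t')=\eta_{0,-r-(t'-t)}$ for all $t'>t$ in the domain, and this explicit formula extends $\tilde v$ to all of $(t_1,\infty)$. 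The asymmetry to keep in mind is that ascent can always be continued, since $\infty^{\star}$ is a visual limit at infinite distance, whereas descent can be obstructed at finite distance by type IV points (this is the content of \S\ref{sec cf for points of type IV}); your proposal attributes to descent the unobstructedness that only ascent enjoys.
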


\begin{proof}
     Replacing $\tilde{v}$ by another geodesic in the 
     same $\mathrm{SL}_2(\A)$-orbit as before, we can assume that the image
    of $\tilde{v}$ has a point on the ray $\mathscr{R}_{\infty}$.
    In other words, $\tilde{v}(t)=\eta_{ 0,-r}$, with $r>0$,
    for some $t\in(t_1,t_2)$. Since $v$ is increasing, we have
    $\tilde{v}(t')=\eta_{ 0,-r-(t'-t) }$ for every $t'>t$.
    It can be extended by the same formula.
\end{proof}

\begin{lemma}\label{p63}
Let $\tilde{v}$ and $v$ as before, and assume $v$
takes the value $0$ at some 
point $t\in(t_1,t_2)$. Then there exists $\epsilon>0$ such
that $v$ is monotonic with slope $\pm1$ in either interval, 
$(t-\epsilon,t)$ and $(t,t+\epsilon)$.
\end{lemma}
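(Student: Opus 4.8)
The plan is to analyze the behavior of the RVF $v$ near a point $t$ where $v(t)=0$, which corresponds to the geodesic passing through the base point $\eta_{0,0}$ of the modular ray $\mathscr{R}_{\infty}$. The key observation is that $v(t)=0$ means $\tilde{v}(t)$ lies in the $\mathrm{SL}_2(\A)$-orbit of $\eta_{0,0}$, so after replacing $\tilde{v}$ by $\mathrm{g}*\tilde{v}$ for a suitable $\mathrm{g}\in\mathrm{SL}_2(\A)$, I may assume $\tilde{v}(t)=\eta_{0,0}$ itself. This normalization is harmless because $v=\pi_0\circ\tilde{v}$ is unchanged under the group action, since $\pi_0$ factors through the quotient.

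First I would fix a small $\epsilon>0$ and examine the two pieces $\tilde{v}((t-\epsilon,t))$ and $\tilde{v}((t,t+\epsilon))$ separately. On each open subinterval, $v$ does not take the value $0$ (shrinking $\epsilon$ if necessary, using that $\tilde{v}$ is an isometry and $v$ is continuous, so $v$ is positive on a punctured neighborhood unless it is identically zero on one side, a degenerate case I would dispose of directly). Then Lemma \ref{p61} applies on each side: on an interval where $v$ avoids $0$, the function is either monotonic with slope $\pm1$ throughout or has a single interior local maximum with slope $\pm1$ on either flank. By further shrinking $\epsilon$ so that neither subinterval contains such an interior maximum, I can guarantee that $v$ is monotonic with slope $\pm1$ on each of $(t-\epsilon,t)$ and $(t,t+\epsilon)$, which is exactly the claim.

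The main technical point, and the step I expect to require the most care, is justifying that $\epsilon$ can be chosen so that no interior local maximum of the RVF occurs in either of the two half-intervals. Here I would use that $\tilde{v}$ is an isometric geodesic parametrized by the valuation coordinate $r\mapsto\nu(\tilde{v}(r))=r$, together with the explicit formula $\pi_0(\eta_{a,r})=-r$ for $r\leq 0$ coming from Lemma \ref{lemma closing umbrellas}. Since $\tilde{v}(t)=\eta_{0,0}$ sits at the endpoint of $\mathscr{R}_{\infty}$, the geodesic enters the fundamental region $\mathscr{R}_{\infty}$ transversally at the top, and the slope of $v$ changes sign precisely as the geodesic crosses a wall of the tessellation; the local maxima produced by Lemma \ref{p61} correspond to such wall crossings away from $t$, which occur at positive distance from $t$. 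Choosing $\epsilon$ smaller than this distance isolates the crossing at $t$ itself, and the slope $\pm1$ assertion follows from the fact that $\pi_0$ is a local isometry away from the branch locus, a consequence of Proposition \ref{prop iso with the ray} together with the formula for $\phi$.

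Once the monotonic-slope-$\pm1$ structure is established on each side, the statement is complete; I would not need to determine the actual signs of the slopes, as the lemma only asserts monotonicity with slope $\pm1$ on each of the two subintervals. The forward use of this lemma will presumably be to track how the value $v$ ascends and descends as $\tilde{v}$ traverses wall crossings, feeding into the identification of the degrees $\dg(f_n)$ with the promenade data in Proposition \ref{p 74}.
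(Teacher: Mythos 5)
Your reduction to Lemma \ref{p61} has the right outer shape (normalize so that $\tilde{v}(t)=\eta_{0,0}$, treat each side separately, then shrink $\epsilon$), but it silently assumes the one thing that actually needs proving: that the zeros of $v$ do not accumulate at $t$. Your justification --- ``$\tilde{v}$ is an isometry and $v$ is continuous, so $v$ is positive on a punctured neighborhood unless it is identically zero on one side'' --- is a non sequitur: continuity is perfectly compatible with zeros accumulating at a zero, and the ``identically zero'' alternative cannot be ``disposed of directly,'' since if it occurred it would flatly contradict the lemma; ruling it out \emph{is} the content of the lemma, not a side remark. Your backup argument, that ``wall crossings occur at positive distance from $t$'' in a tessellation picture, begs the same question and, worse, relies on an intuition that fails in this paper's setting: the value group here is $\mathbb{Q}$, which is dense in $\mathbb{R}$, and the $\mathrm{SL}_2(\A)$-orbit of $\eta_{0,0}$ is \emph{not} discrete in $\h$. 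Concretely, the point $\eta_{t^{-1/m},\,2/m}$ lies in that orbit (apply $\mathrm{t}_{-t^{1/m}}\circ\mathrm{i}$ to it) and converges to $\eta_{0,0}$ as $m\to\infty$. So there is no locally finite system of walls around $\eta_{0,0}$: points at which a promenade can bounce accumulate at $\eta_{0,0}$ itself, and ``positive distance'' is exactly what is in doubt. Likewise, ``$\pi_0$ is a local isometry away from the branch locus'' is not something Proposition \ref{prop iso with the ray} gives you; it is again essentially the statement being proved.

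What rescues the lemma is a statement about a \emph{single geodesic}, not about the orbit, and this is the ingredient your proposal is missing. Since $\h$ is an $\mathbb{R}$-tree, $\tilde{v}$ enters and leaves $\eta_{0,0}$ along two fixed directions, and at most one of them can point ``up'' toward $\infty^{\star}$; this is how the paper argues. After composing with a Moebius transformation $z\mapsto 1/(z-b)$ (with $b\in\mathcal{O}$ chosen in a residue class distinct from that of the descending side), one may assume both sides descend into \emph{distinct} residue classes $\bar{a}_3\neq\bar{a}_4$. Choosing constants $c_3,c_4\in E$ with $c_i\equiv a_i \pmod{\mathcal{M}}$ and
$\epsilon<\min\bigl(\nu(a_3-c_3),\,\nu(a_4-c_4),\,t-t_3,\,t_4-t\bigr)$,
the map $z\mapsto 1/(z-c_3)$ carries the segment $\tilde{v}\bigl((t-\epsilon,t]\bigr)$ isometrically onto a piece of the modular ray, forcing $v(t-\delta)=\delta$ exactly for $0<\delta<\epsilon$, and similarly on the right. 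Along a fixed direction the zero at $t$ is therefore isolated and the slope is literally $\pm1$; with this in hand you do not even need Lemma \ref{p61}. Without it, neither your non-accumulation claim nor your exclusion of nearby local maxima is justified.
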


\begin{proof}
    As before, we can assume that $\tilde{v}(t)=\eta_{0,0}$, for some $t\in (t_1,t_2)$.
    Let $t_3\in (t_1,t)$ and $t_4\in (t,t_2)$, and set
    $\tilde{v}(t_i)=\eta_{ a_i,r_i }$, for $i=3,4$.
    If $\nu(a_i)<0$ or $r_i<0$, the geodesic connecting
    $t_i$ and $t$ contain points in the modular ray.
    This cannot happen simultaneously for $i=3$ and
    $i=4$, as $\tilde{v}$ is a geodesic passing through
    $\eta_{0,0}$. It follows that at least one point,
    $\tilde{v}(t_3)$, or $\tilde{v}(t_4)$,lies below 
    $\eta_{0,0}$. If $r_3<0$, we apply a Moebius transformation 
    of the form $z\mapsto\frac1{z-b}$, where 
    $b\in\mathcal{O}_{\hat{K}}$
    is not congruent to $a_4$ modulo the maximal ideal 
    $\mathcal{M}$. The case where $r_4<0$ is handled similarly,
    so we might assume both $r_3,r_4>0$
    and $a_3$ in not congruent to $a_4$ modulo $\mathcal{M}$.
    Choose elements $c_3,c_4\in F$ satisfying $c_i\equiv a_i$
    modulo $\mathcal{M}$, for $i=3,4$. This implies that
    $\nu_i:=\nu(a_i-c_i)>0$. Choose $\epsilon<
    \min(\nu_3,\nu_4,t-t_3,t_4-t)$. Then,
    the Moebius transformation $z\mapsto\frac1{z-c_3}$
    maps the geodesic between 
    $\tilde{v}(t-\epsilon)=\eta_{ c,\epsilon}$ and
    $\tilde{v}(t)=\eta_{c,0}$ to the geodesic between
    $\eta_{ 0,-\epsilon}$ and $\eta_{ 0,0}$.
    The interval $(t,t+\epsilon)$ is handled similarly.
\end{proof}

In what follows we write $w_u=\pi_0\circ\tilde{w}_u$
for the promenade corresponding to every point $u$ of 
type I or IV. This is a real valued function defined on an
interval $(-\infty,s)$.  When $u=a^{\star}$ is a type I point, 
then $s=\infty$. For $t<0$ we have 
$w_u(t)=\pi_0\big(\eta_{a,t}\big)=-t$. 
At $t=0$ the promenade bounces back at $0$ 
(Prop. \ref{p63}), and
then, according to Prop \ref{p61},
either proceeds to return back to $\infty$ or reaches
a maximum, returning to $0$ once again. This process can either
repeat indefinitely, or ends by the promenade heading towards 
$\infty$ after a finite number of repetitions.
The situation is similar for a type IV point, except that
in this case the promenade must stop at a finite value $s$
of the parameter.

\begin{proposition}\label{p 74}
For a type I point $u=a^{\star}$, the value at the 
successive maxima of the
function $w_u$ coincide with the degrees $\dg(f_i)$ 
for $i\in\mathbb{Z}_{>0}$. In particular, the promenade proceeds to
$\infty$ after a finite number of iterations precisely when 
the continued fraction of $a$ stops, i.e., when $a\in\tilde{k}$. 
\end{proposition}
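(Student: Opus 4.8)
The plan is to reduce the computation of the promenade $w_u=\pi_0\circ\tilde{w}_u$ to a single self-similarity (shift) relation and then to read off the heights of its successive maxima by induction. Write $S_n=\sum_{i=1}^n\dg(f_i)$, and for a type I point $b\in\hat{K}$ with continued fraction $[g_0,g_1,\dots]$ let $w_b$ denote its promenade. The starting point is the computation already carried out inside the proof of Proposition \ref{prop fund domain and closing umbrellas}: writing $\mathrm{r}_n$ for the matrix of $\rho_n$, one has $\mathrm{r}_n*\eta_{a,r}=\eta_{z_{n+1},\,r-2S_{n+1}}$ for $r\ge\dg(f_{n+1})+2S_n$, the radius rule $r'=r+\nu(\rho_n'(a))$ being valid precisely in this small-ball range and $\nu(\rho_n'(a))=-2S_{n+1}$ by the corollary to Lemma \ref{lemma log derivate and degrees}. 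Since $\pi_0$ is $\mathrm{SL}_2(\A)$-invariant this yields the shift relation
\[
w_a(r)=w_{z_{n+1}}(r-2S_{n+1}),\qquad r\ge\dg(f_{n+1})+2S_n,
\]
where $z_{n+1}$ is the tail of the expansion, with $\nu(z_{n+1})=-\dg(f_{n+1})$ and continued fraction $[f_{n+1},f_{n+2},\dots]$.

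Next I would record two elementary evaluations of $\pi_0$. For any $b$ and any $\rho\le0$ one has $w_b(\rho)=-\rho$, since translating by $\mathrm{t}_{-g_0}\in\mathrm{SL}_2(\A)$ carries $\eta_{b,\rho}$ to $\eta_{0,\rho}\in\mathscr{R}_\infty$ because $\nu(b-g_0)>0\ge\rho$ (Lemma \ref{lemma closing umbrellas}); and $\pi_0(\eta_{0,r})=|r|$ for all $r$, because for $r>0$ the matrix $\sbmattrix{0}{1}{-1}{0}\in\mathrm{SL}_2(\A)$ sends $\eta_{0,r}$ to $\eta_{0,-r}\in\mathscr{R}_\infty$. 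Combining these with the translation $\mathrm{t}_{-f_0}$ and the identity $\nu(a-f_0)=\dg(f_1)$ (Proposition \ref{prop approx cf} with $n=0$) computes the first excursion explicitly: $w_a(r)=|r|$ on $(-\infty,\dg(f_1)]$, so $w_a$ descends to $0$ at $r=0$, bounces (Lemma \ref{p63}), and climbs with slope $+1$ to a first local maximum of height $\dg(f_1)$ at $r=\dg(f_1)$. The qualitative shape of the whole promenade — slope $\pm1$, a single maximum per excursion, and a forced bounce at every zero — is furnished by Lemmas \ref{p61}, \ref{p62} and \ref{p63}, so that only the heights remain to be identified.

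The heart of the argument is the induction on $k$ of the statement $Q(k)$: for every type I point $b$ whose continued fraction has length $\ge k$, the first $k$ maxima of $w_b$ have heights $\dg(g_1),\dots,\dg(g_k)$, and if the length equals $k$ then $w_b$ increases monotonically to $\infty$ after its $k$-th maximum. The base case $Q(1)$ is the first-excursion computation above, its terminal clause following from $z_1=g_1\in\A$, whose promenade equals $|\cdot|$, so that $w_b(r)=|r-2\dg(g_1)|$ climbs to $\infty$ beyond $r=2\dg(g_1)$. For the inductive step, apply the shift relation with $n=0$ to write $w_b(r)=w_{z_1}(r-2\dg(g_1))$ for $r\ge\dg(g_1)$, where $z_1$ has continued fraction $[g_1,g_2,\dots]$ of length $\ge k$. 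Since $w_{z_1}(\rho)=-\rho$ is strictly decreasing for $\rho\le0$, every maximum of $w_{z_1}$ lies at some $\rho>0$, hence inside the valid range, and after the shift these become exactly the maxima of $w_b$ lying beyond its first one, while the first maximum (height $\dg(g_1)$, at $r=\dg(g_1)$) is produced at the junction of the two regimes. Thus the $i$-th maximum of $w_b$ is $\dg(g_1)$ for $i=1$ and the $(i-1)$-th maximum of $w_{z_1}$, namely $\dg(g_i)$, for $i\ge2$; the terminal clause is inherited from $Q(k)$ applied to $z_1$. Specializing to $b=a$: if $a\in\tilde{k}$ the expansion has finite length $N$ (Proposition \ref{mt2b}), so $Q(N)$ gives precisely the maxima $\dg(f_1),\dots,\dg(f_N)$ followed by escape to $\infty$; if $a\notin\tilde{k}$ the expansion is infinite, $Q(k)$ holds for all $k$, every maximum is followed by a descent to $0$, and the promenade oscillates forever, yielding the asserted dichotomy.

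The main obstacle is to keep the bookkeeping of the shift relation honest on two fronts. First, the radius rule $r'=r-2S_{n+1}$ holds only in the small-ball regime $r\ge\dg(f_{n+1})+2S_n$; the behaviour for smaller $r$ is recovered from the shift relation at lower indices, ultimately from the base evaluation $\pi_0(\eta_{0,\cdot})=|\cdot|$, and one must verify that consecutive descriptions agree at each junction $r=\dg(f_{n+1})+2S_n$ — which they do, both sides evaluating to $\dg(f_{n+1})$. Second, the matrix $\mathrm{r}_n$ has determinant $(-1)^{n+1}$, so for even $n$ it does not lie in $\mathrm{SL}_2(\A)$; composing it with $\mathrm{diag}(-1,1)$ restores membership at the cost of replacing $\eta_{z_{n+1},\cdot}$ by $\eta_{-z_{n+1},\cdot}$. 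Since $\mathrm{diag}(-1,1)$ fixes $\mathscr{R}_\infty$ pointwise and normalizes $\mathrm{SL}_2(\A)$, it descends to the identity on $\mathrm{SL}_2(\A)\backslash\h\cong\mathscr{R}_\infty$ (Proposition \ref{prop iso with the ray}), whence $\pi_0(\eta_{-z_{n+1},\rho})=\pi_0(\eta_{z_{n+1},\rho})$ and the relation is unaffected. Once these two points are settled the induction proceeds without further difficulty.
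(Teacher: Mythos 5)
Your proposal is correct and takes essentially the same route as the paper: the paper's (much terser) proof likewise applies $\sigma_0(z)=\frac{1}{z-f_0}$ to map the initial segment of the DIG of $a$ onto the modular ray and the remaining ray toward $a^{\star}$ onto the DIG of $z_1=\sigma_0(a)$, and then says ``iterations are similar,'' which is exactly your induction via the shift relation. Your additional bookkeeping --- the validity range of the radius rule, agreement at the junctions, and the determinant $(-1)^{n+1}$ of $\mathrm{r}_n$ being repaired by $\mathrm{diag}(-1,1)$ --- fills in details the paper leaves implicit.
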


\begin{proof}
It is a straightforward computation that $\sigma_0(z)=\frac1{z-f_0}$
takes the geodesic from $\eta_{a,-\dg(f_0)}$ to 
$\eta_{ a,\dg(f_1)}$ to  the geodesic from 
$\eta_{ 0,\dg(f_0)}$ to $\eta_{0,-\dg(f_1)}$,
and also the geodesic from $\eta_{a,\dg(f_1)}$ to 
$a^{\star}$ to  the geodesic from 
$\eta_{ 0,-\dg(f_1)}$ to $\sigma_0(a)^{\star}$. 
When $f_1$ is not defined, then $\sigma_0$
takes the geodesic from $\eta_{a,-\dg(f_0)}$ to 
$a^{\star}$ to  the geodesic from 
$\eta_{ 0,\dg(f_0)}$ to $\infty^{\star}$.
Iterations are similar.
\end{proof}

\begin{proposition}\label{p66}
 Consider a sequence $\{f_n\}_{n=0}^{\infty}$ in $\A$ for which
    the series $\sum_{n=0}^\infty\dg(f_n)$ converges to a finite
    value. Then, if 
    $\w_n=[f_0,\dots,f_n]$, the sequence of balls 
    $\{B_{\w_n}\}_{n=1}^{\infty}$ has empty intersection.
\end{proposition}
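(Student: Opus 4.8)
The plan is to argue by contradiction: I assume that some $z\in\hat{K}$ lies in every ball $B_{\w_n}$, and I show that this forces $z$ to possess an infinite, non-terminating continued fraction expansion with coefficients $f_0,f_1,f_2,\dots$, which is incompatible with the convergence of $\sum_n\dg(f_n)$ by Proposition \ref{Prop conv fc}. This is the natural route because emptiness cannot be extracted from purely ultrametric reasoning (see the obstacle below), so the leverage must come from the rigidity of continued-fraction coefficients.

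First I would record the shape of the balls. Writing $\x_n=(\w_n)_{\mathrm{ev}}\in\tilde{k}$, Lemma \ref{l42} gives $B_{\w_n}=B_{\x_n}^{(r_n)}$ with $r_n=2\sum_{i=1}^n\dg(f_i)$; moreover $\Omega_{\w_{n+1}}\subseteq\Omega_{\w_n}$, so the $B_{\w_n}$ form a decreasing chain and ``empty intersection'' is the expected assertion. Under the standing convention $\dg(f_i)>0$ for $i>0$, each $\w_n=[f_0,\dots,f_n]$ is the canonical continued fraction of $\x_n$ by Proposition \ref{prop cf are uniq}, so the algorithm $\mathrm{App}(\x_n)$ produces exactly the coefficients $f_0,\dots,f_n$ and terminates at index $n$.

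Next, suppose $z\in\bigcap_{n\geq 1}B_{\w_n}$. Fixing $n$, membership means $\nu(\x_n-z)>r_n=2\sum_{i=1}^n\dg(f_i)$, which is precisely the hypothesis of Lemma \ref{lemma equal cf} applied with $\x_n$ in the role of the element carrying the known expansion $\{f_i\}$, with $z$ in the role of $z'$, and with $m=n$; the lemma's requirement that the $n$-th term of $\mathrm{App}(\x_n)$ be defined holds because that expansion has length exactly $n$. The conclusion is that $\mathrm{App}(z)$ is defined through index $n$ and agrees with $f_0,\dots,f_n$. Since this holds for every $n$, the continued fraction of $z$ never ends and equals the infinite expression $[f_0,f_1,f_2,\dots]$. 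I then invoke Proposition \ref{Prop conv fc}, which states that a point of $\hat{K}$ whose continued fraction fails to end must satisfy $\sum_{i=1}^\infty\dg(f_i)=\infty$. This contradicts the hypothesis, so no such $z$ exists and $\bigcap_n B_{\w_n}=\emptyset$.

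The main obstacle is conceptual rather than computational. One cannot deduce emptiness from nested-ball reasoning alone: the radii $r_n$ remain bounded, converging to $2\sum_i\dg(f_i)<\infty$, and $\hat{K}$ is not spherically complete, so a priori the intersection of such a shrinking-but-not-to-a-point chain could be empty or nonempty. The decisive input is therefore the coefficient rigidity of Lemma \ref{lemma equal cf} together with the density-of-$\K$ argument underlying Proposition \ref{Prop conv fc}; the only delicate technical point is correctly matching the roles of $z$ and $z'$ in Lemma \ref{lemma equal cf} and verifying its defined-ness hypothesis, as indicated above.
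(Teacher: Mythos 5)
Your proof is correct and follows essentially the same route as the paper's: assume some $z$ lies in every $B_{\w_n}$, conclude that its continued fraction must be the infinite expression $[f_0,f_1,\dots]$, and derive a contradiction with Proposition \ref{Prop conv fc}. The paper treats the coefficient agreement as immediate from the definition of $B_{\w_n}$, whereas you justify it explicitly via Lemma \ref{l42} and Lemma \ref{lemma equal cf}; this is only a difference in level of detail, not of method.
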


\begin{proof}
 Assume $a\in B_{\w_n}$ for every $n$. Then the continued fraction 
 of $a$ must be $[f_0,f_1,\dots]$, but the hypothesis on the degrees
 contradicts Proposition \ref{Prop conv fc}.
\end{proof}

\begin{corollary}
Consider a sequence $\{f_n\}_{n=0}^{\infty}$ in $\A$ for
which the series $\sum_{n=0}^\infty\dg(f_n)$ converges to
a finite limit. Set $\w_n=[f_0,\dots,f_n]$. 
If $\eta_n \in \h$ is the point corresponding to the ball 
$B_{\w_n}$, then the sequence $\{\eta_n\}_{n=1}^{\infty}$ 
converges to a type IV point in the completion of $\h$.\qed
\end{corollary}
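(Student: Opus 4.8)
The plan is to assemble the conclusion from results already established in the excerpt, treating the corollary as a packaging of Proposition \ref{p66} together with the characterization of type IV points given in \S\ref{sec geom int of cf}. First I would recall the standing hypothesis that $\sum_{n=0}^\infty \dg(f_n)$ converges to a finite limit, and set $\w_n = [f_0,\dots,f_n]$ as prescribed. By Lemma \ref{l42}, each $B_{\w_n}$ is an open ball, and by the remark following that lemma its radius parameter is $r_n = 2\sum_{i=1}^n \dg(f_i)$; equivalently $B_{\w_n} = B_z^{[2\dg(q_n)]}$ for any $z \in B_{\w_n}$, where $\dg(q_n) = \sum_{i=1}^n \dg(f_i)$ by Theorem \ref{main teo 1}. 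Thus the point $\eta_n \in \h$ corresponding to $B_{\w_n}$ is precisely $\eta_{a_n, r_n}$ for a center $a_n = (\w_n)_{\mathrm{ev}} \in \tilde{k}$.

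Next I would verify the three defining features of a type IV point listed in \S\ref{sec geom int of cf}: that the balls form a decreasing sequence, that their intersection is empty, and that the radii converge to a finite limit. The nesting $B_{\w_1} \supseteq B_{\w_2} \supseteq \cdots$ follows because any expression starting with $[f_0,\dots,f_{n+1}]$ in particular starts with $[f_0,\dots,f_n]$, so $\Omega_{\w_{n+1}} \subseteq \Omega_{\w_n}$ and hence $B_{\w_{n+1}} \subseteq B_{\w_n}$. The emptiness of $\bigcap_n B_{\w_n}$ is exactly the content of Proposition \ref{p66}. The convergence of the radii to a finite limit is immediate from the convergence of $\sum_i \dg(f_i)$, since $r_n = 2\sum_{i=1}^n \dg(f_i)$. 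These three facts together are precisely the characterization, recalled from \cite[\S 1.2]{BakerBerkovich} or \cite[Ex. 1.4.3]{Berkovich}, of a decreasing sequence of closed balls defining a type IV point.

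Finally I would translate this into a statement about convergence of $\{\eta_n\}$ in the completion $\mathbb{H}^{\mathrm{Berk}}$ of $\h$. The excerpt already states that the nontrivial points of the completion are characterized as limits of sequences $\{\eta_i\}$ whose associated balls form a nested family with empty intersection and whose radii converge; since our $\{\eta_n\}$ satisfies exactly these conditions, it is a Cauchy sequence in the metric $d$ of \eqref{metriceq} and its limit is the type IV point so determined. Concretely, for $m > n$ one computes $d(\eta_n, \eta_m) = r_m - r_n = 2\sum_{i=n+1}^m \dg(f_i)$ using that $\eta_m \prec \eta_n$ (the balls are nested and share the center $a_m$), and the Cauchy property follows from convergence of the series; since $\mathbb{H}^{\mathrm{Berk}}$ is complete, the limit exists, and emptiness of the intersection rules out that the limit lies in $\h$, forcing it to be of type IV.

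\textbf{Main obstacle.} I expect the only genuine subtlety to be bookkeeping rather than mathematics: one must confirm that the $\eta_n$ really are comparable under $\prec$ so that the distance formula collapses to $|r_m - r_n|$ rather than the two-center case of \eqref{metriceq}, and that the sequence is not eventually constant (which would give a type I or type II/III limit instead). The former is handled by the nesting of balls established above; the latter is guaranteed because $\dg(f_i) > 0$ for $i > 0$ forces the radii to be strictly increasing, so the $\eta_n$ are genuinely distinct and descend without stabilizing. Everything else is a direct citation of Proposition \ref{p66} and the completion-theoretic description of type IV points already supplied in the text.
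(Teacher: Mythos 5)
Your proposal is correct and follows exactly the route the paper intends: the corollary is stated with no separate proof precisely because it is the combination of Proposition \ref{p66} (empty intersection) with the characterization of type IV points recalled in \S\ref{sec geom int of cf}, which is what you invoke. Your additional verifications --- the nesting $\Omega_{\w_{n+1}}\subseteq\Omega_{\w_n}$, the distance computation $d(\eta_n,\eta_m)=r_m-r_n$ giving the Cauchy property, and the observation that empty intersection rules out a limit in $\h$ --- are exactly the routine steps the paper leaves implicit, so there is nothing to add.
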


\begin{definition}
    A type IV point $\eta$ has type IVa if there is a sequence 
    $\{f_n\}_{n=1}^{\infty}$ in $\A$ satisfying
    the following conditions:
    \begin{enumerate}
    \item $\sum_{n=1}^{\infty}\deg{f_i}<\infty$.
    \item If $\w_n=[f_0,\dots,f_n]$, then  
    $B_{\w_n}\stackrel{n\rightarrow\infty}
    {\longrightarrow}\eta$.
    \end{enumerate}
    Otherwise we say that $\eta$ has type IVb.
\end{definition}

Our objective in the rest of this section is to give
a characterization for the points of type IVb.

\begin{example}\label{e69}
    Consider the elements $a_n=\sum_{i=1}^{n}t^{1/i}$,
    and the balls $B_n=B_{a_n}^{[\nu(t)/(n+1)]}$. Then
    $\{B_n\}_{n=1}^{\infty}$ is a sequence of nested balls
    whose intersection is $\varnothing$. The same holds 
    if we write $b_n=\sum_{i=1}^{N}t^{r(i)}$,
    and set $B_n=B_{b_n}^{[r(n+1)\nu(t)]}$ for
    any decreasing sequence of positive real numbers 
    $\{r(n)\}_{n=1}^{\infty}$. 
    It is immediate that the type
    IV point $b$
    corresponding to any such intersection has type IVb, 
    since  the promenade $w_b$ is defined on an interval of
    the form $(-\infty,s)$ for $s\leq0$, so $w_b$
    does not bounce even once.
\end{example}

\begin{definition}
Set $\A_{\dg>0}=\{f\in\A|\dg(f)>0\}$.
Let $\w=[f_0,\dots,f_n]$ be a finite expression with 
$f_i\in \A_{\dg>0}$.
    If $D$ is the ball corresponding to a point
    $\e\in \h$ with
$r\leq0$, then we write $B_{\w-D}$ for the set of
all elements in $\hat{K}$ whose continued fraction has the form
$[f_0,\dots,f_n,h_{n+1},g_{n+2},\dots]$ with $h_{n+1}\in D \cap \A_{\dg>0}$, where  $g_{n+2},\dots$ are arbitrary elements in  $\A_{\dg>0}$.
\end{definition}

\begin{proposition}\label{p610}
    The set $B_{\w-D}$ described above is a ball 
    in $\hat{K}$. Furthermore,
    any ball $B$ has the form $B=B_{\w-D}$ for some
    possibly empty expression $\w$, and for the 
    ball $D$ corresponding
    to some point $\eta=\eta_D\in\h$ satisfying $\nu(\eta)\leq0$.  
\end{proposition}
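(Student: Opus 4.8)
\textbf{Proof plan for Proposition \ref{p610}.}

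The plan is to prove both assertions by understanding $B_{\w-D}$ as the image of a standard ball under the Moebius transformation $\rho_n^{-1}$ attached to $\w=[f_0,\dots,f_n]$, and then to run the construction in reverse to realize an arbitrary ball. First I would recall from Lemma \ref{l42} that the set $B_\w$ of elements whose continued fraction starts with $\w$ equals $\eta_n^{-1}\big(B_0^{(0)}\big)$, where $\eta_n(z)=1/\rho_n(z)$, so that $\rho_n$ carries $B_\w$ bijectively onto the complement of the closed ball $B_0^{[0]}$. An element of $B_\w$ whose continued fraction continues as $[f_0,\dots,f_n,h_{n+1},\dots]$ corresponds, under $\rho_n$, to the element $z_{n+1}=\rho_n(z)$ whose own continued fraction begins with $h_{n+1}$; imposing $h_{n+1}\in D\cap\A_{\dg>0}$ is precisely the condition that $z_{n+1}$ lie above the ball $D$ in the sense that $\nu(z_{n+1}-h_{n+1})>0$ for some $h_{n+1}\in D$. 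The key computation is to check that the set of such $z_{n+1}$ is itself a ball (or the complement of a ball) in $\mathbb{P}^1(\hat K)$: since $D=B_a^{[r]}$ with $r\leq 0$, the admissible first coefficients $h_{n+1}$ fill out an open ball, and the preimage under the first approximation map is again a ball not containing $\infty$. Then $B_{\w-D}=\rho_n^{-1}(\text{that ball})$, and because Moebius transformations send balls and complements of balls to balls or complements of balls (Lemma \ref{l42}), and $B_{\w-D}\subset B_\w$ contains no point at $\infty$, the set $B_{\w-D}$ is an honest ball in $\hat K$.

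For the second assertion I would argue in the reverse direction. Given an arbitrary ball $B=B_c^{[s]}$ in $\hat K$, I would follow the promenade of the full DIG $\tilde w_{c}$ and use Proposition \ref{prop fund domain and closing umbrellas}(b) together with Proposition \ref{p 74} to read off the initial segment of the continued fraction expansion of the center $c$ up to the last approximant whose index $n$ satisfies $2\sum_{i=1}^n\dg(f_i)\leq s$. This determines the finite expression $\w=[f_0,\dots,f_n]$ with $B\subseteq B_\w$. Applying $\rho_n$ then carries $B$ onto a ball inside the complement of $B_0^{[0]}$, and I would set $D$ to be the ball corresponding to the point $\eta_D=\rho_n*\eta_B\in\h$ of the Berkovich space, where $\eta_B$ is the point associated to $B$; the condition $\nu(\eta_D)\leq 0$ reflects that, after applying $\rho_n$, the image point lies on or below the fundamental ray $\mathscr R_\infty$. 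Unwinding the definition of $B_{\w-D}$ shows it equals $B$, possibly with $\w$ empty when $s\leq 0$ already (so that no bouncing off the modular ray has occurred, as in Example \ref{e69}).

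The main obstacle I anticipate is the bookkeeping at the boundary between the ``finite expression'' part $\w$ and the residual ball $D$: one must verify that the choice of $n$ is canonical and that the resulting $D$ genuinely satisfies $\nu(\eta_D)\leq 0$, rather than overshooting into a regime where another full coefficient $f_{n+1}$ should have been extracted. Concretely, the delicate point is to show that $\rho_n$ maps the Berkovich point $\eta_B$ to a point lying weakly below $\mathscr R_\infty$, which amounts to checking that $s$ does not exceed the threshold $2\sum_{i=1}^{n+1}\dg(f_i)$ at which the promenade would cross another maximum; this is exactly the interval condition that appears in the proof of Proposition \ref{prop fund domain and closing umbrellas}(b). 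Once this threshold bookkeeping is pinned down, the fact that $B_{\w-D}$ is a ball and that every ball arises this way follows formally from the equivariance of the $\h$-action and the ball-to-ball property of Moebius transformations already established in Lemma \ref{l42}.
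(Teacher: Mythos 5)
Your plan follows the same route as the paper's proof: transport $B_{\w-D}$ by the Moebius map $\rho_n$ for the first claim and, for the second, expand the center $c$ of $B$ as a continued fraction, truncate at a degree threshold $n$, and recover $D$ from $\rho_n(B)$. The trouble is that the steps you defer as ``bookkeeping'' are where the content lies, and two of them fail as stated. First, your truncation rule (the largest $n$ with $2\sum_{i=1}^n\dg(f_i)\leq s$) is wrong at boundary radii: if $s=2\sum_{i=1}^n\dg(f_i)$ exactly, then $B=B_c^{[s]}$ is a closed ball while $B_\w$ is the \emph{open} ball $B_c^{(s)}$ by Lemma \ref{l42}, so $B\not\subseteq B_\w$ and Lemma \ref{lemma equal cf} no longer applies; one needs the strict inequality $2\sum_{i=1}^n\dg(f_i)<s$, as in the paper. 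Second, your ``key computation'' in the first part is false without an extra hypothesis: if $0\in D$, the set of $z_{n+1}$ whose first coefficient lies in $D\cap\A_{\dg>0}$ is the annulus $\{y:\nu(\eta_D)\leq\nu(y)<0\}$ (empty if $\nu(\eta_D)=0$), not a ball; identifying that set with $D$ requires $0\notin D$.

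The deeper gap concerns the second claim. You place the delicate point at the upper threshold — that $s$ not exceed $2\sum_{i=1}^{n+1}\dg(f_i)$ — but that bound is automatic from your choice of $n$. The genuine obstruction is at the \emph{lower} end of the interval, and neither your plan nor the interval condition of Proposition \ref{prop fund domain and closing umbrellas}(b) that you invoke touches it. If $2\sum_{i=1}^n\dg(f_i)<s\leq\dg(f_{n+1})+2\sum_{i=1}^n\dg(f_i)$, then Proposition \ref{prop approx cf} gives $\nu\big(c-\x_n(c)\big)=\dg(f_{n+1})+2\sum_{i=1}^n\dg(f_i)\geq s$, so the approximant $\x_n(c)$ lies in $B$; since $\x_n(c)$ is precisely the pole of $\rho_n$ (Lemma \ref{lemma cf in terms of rho}), the set $\rho_n(B)$ contains $\infty$ and is the complement of a ball, not a ball, so the point $\rho_n*\eta_B$ does not correspond to $\rho_n(B)$ and your unwinding step collapses. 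Worse, no choice of $(\w,D)$ can represent such a $B$: every element of $B$ has continued fraction beginning with $[f_0,\dots,f_n]$ (Lemma \ref{lemma equal cf}), so $\w$ must be $[f_0,\dots,f_m]$ with $m\leq n$; if $m=n$, then $\x_n(c)\in B$ is excluded from $B_{\w-D}$, all of whose members have an $(n+1)$-st coefficient; if $m<n$, then $D\cap\A_{\dg>0}$ contains $f_{m+1}$, hence also $f_{m+1}+\epsilon$ for every constant $\epsilon$ (because $\nu(\epsilon)\geq0\geq\nu(\eta_D)$), and for $\epsilon\neq0$ the element $y=[f_0,\dots,f_m,f_{m+1}+\epsilon]_{\mathrm{ev}}$ lies in $B_{\w-D}$ yet satisfies $\nu(y-c)=2\sum_{i=1}^{m+1}\dg(f_i)<s$, so $y\notin B$. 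A concrete instance is $c=[t,t,t,\dots]_{\mathrm{ev}}$ with $s=3$, where $B$ contains $[t,t]_{\mathrm{ev}}=t+1/t$. In fairness, the paper's own proof glosses over exactly this regime with ``it is easy to see that $\rho_n(B)$ is a ball,'' so you have reproduced its weakness rather than created a new one; but since your proposal singles out the threshold bookkeeping as the crux, it matters that the crux sits at the other end of the interval, and that it cannot be repaired by bookkeeping alone — in this range the representation $B=B_{\w-D}$ genuinely fails, so either the radii must be restricted or the definition of $B_{\w-D}$ modified.
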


\begin{proof}
Let $\rho_n$ be as in Definition \ref{def rho}.
It is easy to see that $\rho_n(B_{\w-D})=D$. 
Note that the pole of $\rho_n$ is precisely
the element in $\tilde{k}$ whose continued
fraction equals $[f_0,\dots,f_n]$.
Then the first statement 
follows from the fact that Moebius 
transformations map any ball that
fails to contains its pole onto another ball.
Now take an arbitrary ball $B=B_a^{[r]}$, write $a=[f_0,f_1,\dots]_{\mathrm{ev}}$,
and find $n$ so that $2\sum_{i=1}^n\dg(f_i)<r\leq
2\sum_{i=1}^{n+1}\dg(f_i)$. Then it is easy to see that
$\rho_n(B)$ is a ball in $\hat{K}$
corresponding to a point $\eta \in \h$ with 
$\nu(\eta)\leq0$, which concludes the proof.
\end{proof}

\begin{definition}
  We write $\eta_{\w-D}\in\h$ for the point corresponding 
  to the ball $B_{\w-D}$. Similarly, we write $\eta_\w$
  for the point corresponding to $B_\w$.
\end{definition}

Note that, as follows from the relation 
$\rho_n(B_{\w-D})=D$, we have
$B_{\w-D_1} \supseteq B_{\w-D_2}$ whenever 
$D_1 \supseteq D_2$.
In particular, the limit of $\lbrace \eta_{\w-D_n} \rbrace_{n=1}^{\infty} \subset \h$, for $D_1\supseteq D_2 \supseteq \cdots$, is a point of type IV
whenever $\bigcap_{i=1}^\infty D_i=\varnothing$.

\begin{proposition}\label{p613}
    If $\eta$ has type IVb, then there is a longest 
    expression $\w$
    for which $\eta\subseteq B_{\w}$. Furthermore, 
    there is a nested sequence of balls 
    $D_1\supseteq D_2\supseteq\dots$, with
    $\nu(\eta_{D_i})<0$, for which 
    $\eta_{\w-D_n}\stackrel{n\rightarrow\infty}{\mapsto}\eta$.
\end{proposition}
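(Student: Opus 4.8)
The plan is to establish the two assertions in turn: first the existence of a longest prefix $\w$, and then the approximation of $\eta$ by points of the form $\eta_{\w-D}$.

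\textbf{Existence of a longest $\w$.} First I would observe that the finite expressions $\w_m=[f_0,\dots,f_m]$ (with $\dg(f_i)>0$ for $i>0$) satisfying $\eta\subseteq B_{\w_m}$ are totally ordered by the prefix relation: by Lemma \ref{l42} each $B_{\w_m}$ is a ball, and any two balls meeting the nested sequence defining $\eta$ are forced to be nested, so of two such expressions one is a prefix of the other. It therefore suffices to exclude an infinite chain, i.e. a single sequence $f_0,f_1,\dots$ with $\eta\subseteq B_{\w_m}$ for all $m$, and I would split according to the behaviour of $\sum_i\dg(f_i)$. Since $B_{\w_m}$ is a genuine ball (it does not contain $\infty$), the point $\eta_{\w_m}$, which sits at height $r_m=2\sum_{i=1}^m\dg(f_i)$, separates $\infty^{\star}$ from $\eta$ and hence lies on the descending geodesic $\tilde{w}_\eta$. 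If $\sum_i\dg(f_i)$ diverges then $r_m\to\infty$, contradicting that $\tilde{w}_\eta$ is defined only on a bounded interval $(-\infty,s)$ (the completeness of $\hat K$ caps the height of a type IV point). If instead $\sum_i\dg(f_i)$ converges, then Proposition \ref{p66} gives $\bigcap_m B_{\w_m}=\varnothing$; the points $\eta_{\w_m}=\tilde{w}_\eta(r_m)$ cannot converge to an interior point of $\h$, so $r_m\to s$ and $\eta_{\w_m}\to\eta$, exhibiting $\eta$ as a type IVa point and contradicting the hypothesis. Hence the chain is finite and a longest $\w=[f_0,\dots,f_n]$ exists.

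\textbf{Construction of the $D_m$.} Here I would transport everything through the Moebius map $\rho_n$ of Definition \ref{def rho} attached to $\w$. Since $\rho_n$ acts isometrically on $\h$ and extends to type IV points, and since $\rho_n(B_\w)$ is the exterior of $B_0^{[0]}$ by Lemma \ref{l42}, the image $\eta^{\ast}:=\rho_n(\eta)$ is a type IV point lying in $\{z:\nu(z)<0\}$. Using $\rho_n\big(B_{[f_0,\dots,f_n,g]}\big)=B_g^{(0)}$ (which follows from $\rho_{n+1}=\sigma_{n+1}\circ\rho_n$ and Lemma \ref{l42}), the maximality of $\w$ translates exactly into the statement that $\eta^{\ast}$ is contained in no ball $B_g^{(0)}$ with $g\in\A_{\dg>0}$. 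Now pick any defining nested sequence $D_1\supseteq D_2\supseteq\cdots$ for $\eta^{\ast}$ lying in $\{\nu<0\}$, write $D_m=B_{a_m}^{[s_m]}$, so the heights $s_m$ strictly increase to the finite height $s_\infty$ of $\eta^{\ast}$. The decisive point is $s_\infty\le 0$: if some $s_m>0$, then $\nu(a_m)<0$, and choosing $g\in\A$ with $\nu(a_m-g)>0$ via Lemma \ref{lemma first approx} (so $\dg(g)=-\nu(a_m)>0$) one checks $\nu(b-g)\ge\min(s_m,\nu(a_m-g))>0$ for all $b\in D_m$, giving $\eta^{\ast}\subseteq D_m\subseteq B_g^{(0)}$, a contradiction. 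Thus $\nu(\eta_{D_m})=s_m<s_\infty\le 0$ for every $m$. Finally, pulling back by $\rho_n^{-1}$ and using $\rho_n(B_{\w-D})=D$ from the proof of Proposition \ref{p610}, we obtain $\eta_{\w-D_m}=\rho_n^{-1}(\eta_{D_m})\to\rho_n^{-1}(\eta^{\ast})=\eta$, which is the claim.

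The hardest part, I expect, is the bookkeeping in the first assertion: making fully rigorous that $\eta\subseteq B_{\w_m}$ places $\eta_{\w_m}$ on the geodesic $\tilde{w}_\eta$ at the prescribed height, and that in the convergent case this forces cofinality with the defining sequence of $\eta$ and hence type IVa. The second assertion is comparatively routine once the maximality of $\w$ is correctly restated after applying $\rho_n$; there the only delicate step is the estimate $s_\infty\le 0$, which is precisely where Lemma \ref{lemma first approx} does the work.
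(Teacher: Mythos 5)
Your proposal is correct, but it follows a genuinely different route than the paper's one-paragraph argument. The paper works directly with a defining nested sequence $\{B_n\}_{n=1}^{\infty}$ of $\eta$: by Proposition \ref{p610} each ball decomposes as $B_n=B_{\w_n-D_n}$; nesting forces each $\w_n$ to be an initial segment of $\w_{n+1}$; and if the $\w_n$ grew arbitrarily long, then $\eta$ would be the limit of the points $\eta_{\w_n}$ and hence of type IVa --- here no case analysis on $\sum_i\dg(f_i)$ is needed, since the heights of the $\eta_{\w_n}$ are sandwiched between those of the points of $B_n$ and $B_{n+1}$, so the degree sum converges automatically. Therefore the $\w_n$ stabilize, and the tail of the sequence $\{D_n\}$ is precisely the required nested sequence, with no further work. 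Your proof instead establishes the two assertions separately: you first prove existence of a longest prefix among \emph{all} expressions $\w'$ with $\eta\subseteq B_{\w'}$, which is where your two-case analysis enters (divergent sum excluded by the finite height of type IV points, convergent sum excluded via Proposition \ref{p66}); you then rebuild the $D_m$ by transporting a defining sequence of $\rho_n(\eta)$ back through $\rho_n^{-1}$, where maximality of $\w$ must be translated (via Lemma \ref{lemma first approx}) into the height bound $s_\infty\le 0$. What your longer route buys is precision on two points the paper leaves implicit: that the stabilized $\w$ really is longest among all expressions containing $\eta$, not merely among those arising from the chosen defining sequence (your prefix-comparability argument), and that the strict inequality $\nu(\eta_{D_i})<0$ asserted in the statement can be arranged, whereas Proposition \ref{p610} by itself only gives $\nu(\eta_D)\le 0$. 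What the paper's route buys is economy: a single application of Proposition \ref{p610} to the defining sequence produces $\w$ and the $D_n$ simultaneously. Both arguments ultimately rest on the same pillars, namely Proposition \ref{p610}, prefix-comparability of expressions containing $\eta$, and the exclusion of arbitrarily long prefixes via the type IVa/IVb dichotomy, so the difference is one of organization and explicitness rather than of underlying mechanism.
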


\begin{proof}
Assume $\eta$ corresponds to a decreasing sequence 
$\{B_n\}_{n=1}^{\infty}$ of balls with empty intersection.
Write $B_n=B_{\w_n-D_n}$, as can be done for any ball by
Prop. \ref{p610}. If $\w_n\neq \w_{n+1}$, then necessarily
$\w_n$ is an initial segment of $\w_{n+1}$, whence
$B_n\supseteq B_{\w_{n+1}}\supseteq B_{n+1}$. 
It follows that,
if the expression $\w_n$ gets arbitrarily long as $n\rightarrow\infty$,
then $\eta$ can be seen as the limit of a sequence 
$\{\eta_{\w_n}\}_{n=1}^{\infty}$, and therefore 
it is a type IVa point. We conclude that the sequence
of expressions stabilize, whence the result follows.
\end{proof}

\begin{proof}[Proof of Theorem \ref{main teo 3}]
The first statement in the Theorem is in fact the first statement 
in Lemma \ref{l42}. The second statement is Proposition
\ref{p66}, and the last one follows from Example \ref{e69}
and Proposition \ref{p613}.
\end{proof}

\begin{xrem}
The group $\mathrm{SL}_2(\A)$ acts transitively on the set $\mathbb{P}^1(\Tilde{k})$, which is a proper subset of the set of type I points $\mathbb{P}^1(\hat{K})\subset
\mathbb{P}^{1,\mathrm{Berk}}$.
However, this action is non-transitive on the full set $\mathbb{P}^1(\hat{K})$.
In fact, our results on continued fractions can
be used to produce sequences of points in $\h$,
converging to a type I point,
whose image in the ray $\mathscr{R}_{\infty}$ is 
dense, for instance, considering a continued 
fraction whose coefficients have a dense set 
of degrees. This explain why we use the space 
$\h$ for study the action of $\mathrm{SL}_2(\A)$ 
instead the full space 
$\mathbb{P}^{1,\mathrm{Berk}}$.
\end{xrem}

\subsection*{Acknowledgements}
The authors would like to thank Lorenzo Fantini for several useful remarks.
The second author also thanks Anid-Conicyt for its partial support through the Postdoctoral fellowship No $74220027$.

%%%%%%%%%%% To ease editing, use normal size for the references:

\normalsize

\end{document}